\numberwithin{equation}{section}
\theoremstyle{plain}
\newtheorem{theorem}{Theorem}[section]
\newtheorem{lemma}[theorem]{Lemma}
\newtheorem{proposition}[theorem]{Proposition}
\newtheorem{corollary}[theorem]{Corollary}
\theoremstyle{definition}
\newtheorem{definition}[theorem]{Definition}
\newtheorem{example}[theorem]{Example}
\newtheorem{remark}[theorem]{Remark}
\newcommand{\id}         {{\mathrm {Id}}}
\def\R{\mathbb R}
\def\id{{\rm id}}
\def\then{\Rightarrow}
\def\toto{\rightrightarrows}
\def\<{\langle}
\def\>{\rangle}
\begin{document}

\title[]
{Poisson double structures}

\author[]
{Henrique Bursztyn \and Alejandro Cabrera \and Matias del Hoyo}

\address
{IMPA, Estrada Dona Castorina 110, Rio de Janeiro, 22460-320, Brazil
} \email{henrique@impa.br}

\address{Departamento de Matem\'atica Aplicada - IM, Universidade
Federal do Rio de Janeiro, CEP 21941-909, Rio de Janeiro, Brazil}
\email{alejandro@matematica.ufrj.br}

\address
{Universidade Federal Fluminense (UFF), Rua Professor Marcos Waldemar de Freitas Reis, s/n, Niterói, 24.210-201 RJ, Brazil.}
\email{mldelhoyo@id.uff.br}

\date{}


\begin{abstract}
We introduce Poisson double algebroids, and the equivalent concept of double Lie bialgebroid, which 
arise as second-order infinitesimal counterparts of Poisson double groupoids. We develop their underlying Lie theory, showing how these objects are related by differentiation and integration. We use these results to revisit Lie 2-bialgebras by means of Poisson double structures.
\end{abstract}


\maketitle

\vspace{-0.8cm}

{\begin{center}{\em To the memory of Kirill Mackenzie}\end{center}}

\setcounter{tocdepth}{1} 
\tableofcontents 

\vspace{-1cm}


\section{Introduction}

Kirill Mackenzie's pioneering work on  Lie groupoids and Lie algebroids  
was fundamental in promoting their theory to an established field of differential geometry (see \cite{Mac-book}). Among the many facets of his contributions, a central aspect was the investigation of double Lie groupoids and related ``double structures''. These objects naturally emerge e.g. in the theory of symmetries and representations of Lie groupoids and provide a lead-in to ``higher'', or ``categorified'', Lie theory. This paper further develops their  interconnections with Poisson geometry through the study of new double structures that arise as infinitesimal counterparts of Poisson double groupoids, along with differentiation and integration results relating them.

The close ties between Poisson geometry and Lie groupoids began to be unraveled in the 1980's with
the advent of {\em symplectic groupoids} \cite{Kar,We87}, originally as part of a quantization scheme, and presently a central tool in Poisson geometry. From a Lie-theoretic perspective, Poisson manifolds carry a Lie-algebroid structure, and symplectic groupoids are their global counterparts. Further connections with double structures became more apparent in the theory of {\em Poisson Lie groups} \cite{Drinfeld,LuWe2}, which also originated in quantization as \emph{semi-classical} analogues of quantum groups.
Since Poisson Lie groups are at the same time Lie groups and Poisson manifolds, they can  be simultaneously regarded as  global and infinitesimal objects. 
The differentiation of Poisson Lie groups gives rise to {\em Lie bialgebras},
which are key structures behind
the existence of {\em dual Poisson Lie groups}. 
As shown by Lu and Weinstein \cite{LuWe}, any Poisson Lie group $G$ admits an integration to a symplectic groupoid carrying a second compatible groupoid structure integrating the dual Poisson Lie group $G^*$,
$$
\begin{matrix}
(\Sigma,\omega) & \rightrightarrows & G^* \\ \downdownarrows &  & \downdownarrows\\ G & \rightrightarrows & \ast.
\end{matrix}
$$
The resulting object is an example of a {\em symplectic double groupoid}.

The introduction of {\em Poisson groupoids} by Weinstein \cite{We88} as a common framework to treat Poisson Lie groups and symplectic groupoids, indicated far-reaching generalizations of the theory of Poisson Lie groups, including their infinitesimal theory, duality and integration. Mackenzie and collaborators played a central role in advancing this program. For instance, Mackenzie and Xu established the infinitesimal-global correspondence between Lie bialgebroids and Poisson groupoids in \cite{Mac-Xu,Mac-Xu2} (illustrated in the table below). 
In \cite{mac-double,mac-notion,mac-crelle}, Mackenzie investigated Drinfeld doubles in the more general setting of Lie bialgebroids, and in \cite{mk2} the duality of Poisson groupoids was explored in the context of general symplectic double groupoids.

{\small 
\bgroup
\def\arraystretch{1.5}
\begin{figure}[h!]
\begin{center}
\begin{tabular}{ |c|c| }
\hline
Lie groupoid & Lie algebroid \\ \hline
Symplectic groupoid & (cotangent of) Poisson manifold\\ \hline
Poisson groupoid & Lie bialgebroid\\ \hline
\end{tabular}
\label{tab:macxu1}
\end{center}
\end{figure}
\egroup
}

In the course of his investigations on Poisson duality, Mackenzie introduced {\em Poisson double groupoids} \cite{mk2}, a simultaneous generalization of Poisson groupoids and symplectic double groupoids.
Other special cases include (strict) Poisson 2-groups \cite{CSX2}, and new examples arising on moduli spaces are discussed in \cite{daniel2}.

A basic ingredient in our description of the infinitesimal versions of Poisson double groupoids 
is Mackenzie's extensive work on the infinitesimal invariants of double Lie groupoids.
The differentiation of double Lie groupoids  contains two steps, one for each groupoid structure. The first leads to {\em LA-groupoids} \cite{mac-double1}, which are hybrid double structures combining Lie algebroids and Lie groupoids, while the result of the second differentiation is a  {\em double Lie algebroid} \cite{mac-double2,mac-crelle}. 
In contrast with double Lie groupoids, double Lie algebroids are not clearly ``doubles'' in a categorical sense,
which makes their general definition much subtler, see \cite{mac-double,mac-notion,mac-crelle}. 
As it turns out, the compatibilities involved in a double Lie algebroid can be effectively expressed  through the duality between Lie algebroids and linear Poisson structures, a viewpoint that will be recurrently used in this paper.

Regarding new tools, this paper relies on  recent developments in the theory of double structures, such as \cite{BCdH} and \cite{pike}, to complement Mackenzie's work on Poisson double groupoids with the study of their infinitesimal versions, along with a generalization of the Mackenzie-Xu correspondences in the previous table  to this ``categorified'' setting.

\subsection*{Main results and structure of the paper}

In the process of differentiating Poisson groupoids,
Mackenzie and Xu \cite{Mac-Xu} were led to {\em Poisson algebroids}, which are Lie algebroids carrying suitably compatible Poisson structures on their total spaces, and showed that they gave an alternative way to codify Lie bialgebroids (see $\S$ \ref{subsec:poisalg}). 

The main objects of interest in this paper are {\em Poisson double algebroids} (Definition~\ref{def:pda}), defined by a compatible Poisson structure on a {\em double} Lie algebroid. Analogously to Poisson algebroids, Poisson double algebroids may be expressed in terms of pairs of double Lie algebroids in duality, leading to the equivalent concept of {\em double Lie bialgebroid} (Definition~\ref{def:dlbalg}).

Poisson double algebroids are a natural step further from double Lie algebroids. To have a general picture, recall that a
double vector bundle $D$ has a vertical dual $D^*$ and a horizontal dual $D^\bullet$, and various double structures can be viewed in terms of Poisson structures on $D$, $D^*$ and $D^\bullet$ by means of the duality between linear Poisson structures and Lie algebroids:

\
{\footnotesize 
\bgroup
\def\arraystretch{1.5}
\begin{center}\label{tab:ds}
\begin{tabular}{ |r|c|c|c| }
 \hline
 & $D$ & $\qquad\qquad D^*\qquad \qquad$ & $D^\bullet$ \\
 \hline
  $\pi_D$  & Poisson double vector bundle & \multicolumn{2}{c|}{VB-algebroid} \\ \hline
  $\pi_D,\pi_{D^*}$ & 
  \multicolumn{2}{c|}{Poisson VB-algebroid} &
  double Lie algebroid \\ \hline
  $\pi_D,\pi_{D^*},\pi_{D^\bullet}$ & \multicolumn{3}{c|}{Poisson double algebroid} \\ \hline
\end{tabular}
\end{center}
\egroup
}

\

The paper begins
with a review of Poisson groupoids, Poisson algebroids and Lie bialgebroids, which are then used to express the compatibility conditions of various double structures
recalled in  Section \ref{sec:DS}, see the previous table. The main result in this section is Theorem~ \ref{thm:Lie2LA}, which is a version of Lie second's theorem in the context of double Lie algebroids and LA-groupoids.

Poisson double algebroids are introduced in Section \ref{sec:infinitDPG}, along with their main examples and properties, as well as the companion notion of double Lie bialgebroid. Following \cite{pike}, we present an algebraic characterization of 
these objects in terms of Weil algebras in Proposition~\ref{prop:pda-dlba}.

Our main results linking Poisson double algebroids/double Lie bialgebroids to Poisson double groupoids by differentiation and integration are in Section \ref{sec:diffint}. This ``higher'' Mackenzie-Xu correspondence is established in two steps, having {\em Poisson LA-groupoids} (see Definition~\ref{def:PLA}) as intermediate objects. Theorem~\ref{thm:diffintPLA} provides an infinitesimal-global correspondence between Poisson double algebroids and Poisson LA-groupoids, while the parallel result relating Poisson LA-groupoids and Poisson double groupoid is presented in Theorem \ref{thm:diffintegpis}. The composition of these results shows that double Poisson algebroids/Lie bialgebroids are the second-order infinitesimal invariants of Poisson double groupoids. In the symplectic setting, these results clarify the relation between symplectic double groupoids and Lie bialgebroids.

\

{\footnotesize
\bgroup
\def\arraystretch{1.5}
\begin{center}
\begin{tabular}{ |c|c|c| }
\hline
Double Lie groupoid & LA-groupoid  & Double Lie algebroid \\ \hline
Symplectic double groupoid & (cotangent of) Poisson groupoid & (cotangent of) Lie bialgebroid
\\ \hline
Poisson double groupoid & Lie-bialgebroid groupoid  & double Lie bialgebroid
\\ \hline
\end{tabular}
\end{center}
\egroup
}

\

In Section \ref{sec:lie2bi},  we consider Lie 2-bialgebras \cite{BSZ,CSX,CSX2} in light of Poisson double structures, revisiting their main properties from this perspective.


\medskip

\noindent{\bf Acknowledgments}: This project has been partially supported by CNPq and Faperj.
We thank T. Drummond for helpful advice, and  M. de Leon and I. Androulidakis for organizing this memorial volume. We are pleased to dedicate this paper to Kirill Mackenzie, whose work has given great stimulus to our research.

\section{Poisson structures on Lie groupoids and algebroids} \label{sec:prelim}

This section will briefly recall Poisson structures which are suitably compatible with Lie algebroid or groupoid structures, and how they encode various Lie-theoretic structures.

The simplest situation is that of a vector bundle $E\to M$ equipped with a {\em linear} Poisson structure $\pi \in \mathfrak{X}^2(E)$. 
The pair $(E,\pi)$ is called a {\bf Poisson vector bundle}.
There are various equivalent ways to describe the linearity condition, e.g. in terms of linear functions on $E$ being preserved by the Poisson bracket (see e.g. \cite[$\S$ 10.3]{Mac-book}). The following characterization of linear Poisson structures will be particularly useful in this paper. 
The tangent and cotangent bundles of $E$, besides being vector bundles over $E$, carry additional vector-bundle structures $TE\to TM$ and $T^*E\to E^*$; a Poisson structure $\pi$ on $E$ is {\bf linear} if $\pi^\sharp$ is linear with respect to these structures: 
\begin{equation}\label{eq:linear}
\begin{matrix}
T^*E & \stackrel{\pi^\sharp}{\longrightarrow} & TE \\ 
\downarrow &  & \downarrow\\ 
E^* & \longrightarrow & TM.
\end{matrix}
\end{equation}

A key fact that we will use recurrently is the well-known duality between Poisson vector bundles and Lie algebroids: there is a natural bijective correspondence between linear Poisson structure on $E$ and Lie-algebroid structures on $A=E^*$, see e.g. \cite[Thms.~10.3.4 and 10.3.5]{Mac-book}. In this correspondence, the anchor of $E^*$ is just the base map in \eqref{eq:linear}.  An example of a Poisson vector bundle is $E=T^*M$ with its canonical symplectic form; its dual is the tangent Lie algebroid $A=TM$. 

This duality is also expressed at the level of morphisms. 
Given vector bundles $A\to M$ and $B\to N$ and a vector-bundle map $\phi: A \to B$,  its {\bf dual relation} is a vector subbundle of $A^*\times B^*$ over the graph of $\phi|_M$ given by
$$
\mathcal{R}(\phi^*) = \{ (\phi^*(\xi),\xi)\,|\, \xi\in B^*|_{\phi(x)}, \, x\in M \} \subset A^*\times B^*.
$$
If $A$ and $B$ are Lie algebroids, then $\phi$ is a Lie-algebroid map if and only if 
its dual relation $\mathcal{R}(\phi^*)$ is a coisotropic submanifold\footnote{Recall that a submanifold $C$ in a Poisson manifold $(M,\pi)$ is called {\bf coisotropic} if $\pi^\sharp(\mathrm{Ann}(TC)) \subseteq TC$, where $\mathrm{Ann}(TC)$ is the annihilator of $TC$. Equivalently, $\mathrm{Ann}(TC) \subset T^*M$ is a Lie subalgebroid of the Lie algebroid structure on $T^*M$ induced by $\pi$.
} of $A^*\times \overline{B^*}$, see e.g. \cite[Thm.~ 10.4.9]{Mac-book} (here $\overline{B^*}$ denotes $B^*$ with the opposite Poisson structure).

\subsection{Poisson algebroids and Lie bialgebroids}\label{subsec:poisalg}

Let $A$ be a Lie algebroid over $M$; we will use the notation $A\then M$. To describe the compatibility of a Poisson structure $\pi\in \mathfrak{X}^2(A)$ with the Lie-algebroid structure, recall that $A\then M$ gives rise to  tangent and cotangent Lie algebroids, $TA\Rightarrow TM$ and $T^*A\Rightarrow A^*$, see e.g. \cite{Mac-book}.
A Poisson structure $\pi$ on the total space $A$ is {\bf infinitesimally multiplicative}  if the map $\pi^\sharp: T^*A\to TA$ is a morphism of Lie algebroids,
\begin{equation}\label{eq:piLAmap}
\begin{matrix}
T^*A & \stackrel{\pi^\sharp}{\longrightarrow} & TA \\ 
\Downarrow &  & \Downarrow\\ 
A^* & \longrightarrow & TM.
\end{matrix}
\end{equation}
The pair $(A\Rightarrow M,\pi)$ is called a {\bf Poisson algebroid}. 

Since the Poisson structure on a Poisson algebroid is linear, it is equivalent to a Lie-algebroid structure on $A^*$. A key result of Mackenzie and Xu \cite{Mac-Xu} (see also \cite[$\S$ 12.2]{Mac-book}) is that  $(A,\pi)$ is a Poisson algebroid if and ony if the pair of Lie algebroids $(A,A^*)$ forms a {\bf Lie bialgebroid}, i.e., the following compatibility holds:
\begin{equation}\label{eq:bialgebra}
\delta_A ([\xi,\eta]_{A^*}) = [\delta_A \xi, \eta]_{A^*} + [\xi, \delta_A \eta]_{A^*}, \qquad \forall\, \xi, \eta \in \Gamma(A^*),
\end{equation}
where $\delta_A$ is the Lie-algebroid differential of $A$, and we use the natural extension of the Lie bracket $[\cdot,\cdot]_{A^*}$ on $\Gamma(A^*)$ to a Gerstenhaber bracket on $\Gamma(\wedge A^*)$. 
From a slightly different perspective, it is often convenient to think of a Lie-bialgebroid structure on a vector bundle $A\to M$ as a (degree-one) differential $\delta$ and a Gerstenhaber bracket $[\cdot,\cdot]$ on $\Gamma(\wedge A^*)$ such that $\delta$ is a derivation of $[\cdot,\cdot]$ (here $\delta$ is equivalent to a Lie-algebroid structure on $A$ \cite{vaintrob}, while $[\cdot,\cdot]$ is equivalent to a Lie-algebroid structure on $A^*$).
Although not manifest in these formulations, the roles of $A$ and $A^*$ are symmetric in the definition of Lie bialgebroid \cite{kosmann} (see \cite[Sec.~12.1]{Mac-book}). A special case of interest is that of {\bf Lie bialgebras} \cite{Drinfeld}, which arise when $M$ is a point.

An important property of a Lie bialgebroid $(A,A^*)$ is that, writing  $\rho_A:A\to TM$ and $\rho_{A^*}: A^*\to TM$ for the anchors, there is an induced a Poisson structure on $M$:
\begin{equation}\label{eq:basepoisson}
\pi_M^\sharp := \rho_{A^*}\circ \rho_A^* : T^*M\to TM.
\end{equation}

\begin{example}\label{ex:symp1}
As previously mentioned, the canonical symplectic form $\omega_{can}$ on $T^*M$ makes it into a Poisson vector bundle whose dual Lie algebroid is $TM$. For a Poisson manifold $(M,\pi)$, denote by $T^*M_\pi$
the associated Lie algebroid. Its dual Poisson structure on $TM$ is the tangent lift of $\pi$, denoted by $\pi^{tan}$.
The pair $(TM,T^*M_\pi)$ is a Lie bialgebroid, and $(T^*M_\pi,\omega_{can})$ and  $(TM,\pi^{tan})$ are Poisson algebroids.
\end{example}

It turns out that every Poisson algebroid that is symplectic is the cotangent of a Poisson manifold.

\begin{proposition}\label{prop:symplalgbs}
Let $(A,A^*)$ be a Lie bialgebroid.
If $\pi_{A^*}$ is symplectic, then 
$(A^*,\pi_{A^*})$ is isomorphic to $(T^*M_\pi,\omega_{can})$, where $T^*M_\pi$ is defined by the induced Poisson structure on $M$.
\end{proposition}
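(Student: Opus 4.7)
The plan is to show that the symplectic hypothesis forces the anchor $\rho_A : A \to TM$ to be a vector-bundle isomorphism, then use the Lie-algebroid/linear-Poisson duality to identify $(A^*, \pi_{A^*})$ with $(T^*M, \omega_{can})$ as Poisson vector bundles, and finally exploit the Lie-bialgebroid compatibility to recognize that the transported Lie-algebroid structure on $T^*M$ is precisely $T^*M_\pi$ for the induced Poisson structure $\pi$ on $M$.

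First, I would apply the linearity diagram \eqref{eq:linear} with $E = A^*$: under the canonical identification $A^{**}=A$, this identifies $\rho_A$ with the base map of $\pi_{A^*}^\sharp : T^*A^* \to TA^*$. When $\pi_{A^*}$ is symplectic, $\pi_{A^*}^\sharp$ is a diffeomorphism that is simultaneously a morphism for both vector-bundle structures on $T^*A^*$ and $TA^*$; that is, it is a double-vector-bundle isomorphism, and in particular each of its side maps is a VB isomorphism. Since the anchor of any Lie algebroid is automatically a Lie-algebroid morphism, I conclude that $\rho_A : A \xrightarrow{\sim} TM$ is an isomorphism of Lie algebroids.

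Dualizing, $\rho_A^* : T^*M \to A^*$ is a vector-bundle isomorphism. By the duality between Lie algebroids and linear Poisson structures, this identifies $(A^*, \pi_{A^*})$ with the linear Poisson vector bundle dual to the tangent Lie algebroid $TM$, which by Example~\ref{ex:symp1} is $(T^*M, \omega_{can})$. To finish, I would transport the Lie-algebroid structure on $A^*$ through $\rho_A^*$ to obtain some Lie algebroid $B$ on $T^*M$ making $(TM, B)$ a Lie bialgebroid. Using \eqref{eq:basepoisson} together with $\rho_{TM}^* = \id$, its anchor $\rho_B$ equals $\pi_M^\sharp$ for the induced base Poisson $\pi_M$; using the bialgebroid identity \eqref{eq:bialgebra} with $\delta_{TM}=d$ and evaluating on exact $1$-forms yields $[df, dg]_B = d\{f, g\}_{\pi_M}$, so $B$ coincides with the Koszul bracket, i.e. $B = T^*M_{\pi_M}$.

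The main obstacle I expect is the first step: cleanly extracting the invertibility of $\rho_A$ from the symplecticity of $\pi_{A^*}$. The subtle point is that $\pi_{A^*}^\sharp$ is a morphism between two double vector bundles with \emph{different} sides ($A$ versus $TM$), so one must argue that a DVB morphism which is a diffeomorphism on total spaces is automatically an isomorphism on all of its constituent data (side and core maps)---a standard but not entirely trivial fact. Once this point is settled, the remaining identifications reduce to the dualities and bialgebroid compatibilities already recorded in Section~\ref{subsec:poisalg}.
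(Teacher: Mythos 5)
Your argument is correct and runs on the same engine as the paper's proof: both exploit that, via \eqref{eq:linear} with $E=A^*$, the anchor $\rho_A$ is the base map of $\pi_{A^*}^\sharp\colon T^*A^*\to TA^*$, so symplecticity forces $\rho_A\colon A\to TM$ to be an isomorphism, and both then dualize. Where you diverge is in the final identification. The paper quotes \cite[Prop.~12.1.13]{Mac-book} that the anchor is moreover a Poisson morphism $(A,\pi_A)\to (TM,\pi_M^{tan})$, so that $\rho_A$ is an isomorphism of \emph{Poisson algebroids}, and obtains $(A^*,\pi_{A^*})\cong (T^*M_\pi,\omega_{can})$ by dualizing that statement in one stroke. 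You instead use only that $\rho_A$ is a Lie-algebroid isomorphism, dualize to get the Poisson identification with $(T^*M,\omega_{can})$, and then re-derive by hand that the transported bracket is the Koszul bracket; this is more self-contained (no appeal to the tangent-lift property of the anchor) at the cost of redoing the standard Mackenzie--Xu computation that Lie bialgebroids $(TM,B)$ with $TM$ standard correspond to Poisson structures on $M$. Two small remarks: (i) equation \eqref{eq:bialgebra} evaluated literally on the pair $(df,dg)$ only says that $[df,dg]_B$ is closed; to get $[df,dg]_B=d\{f,g\}_{\pi_M}$ you should apply the derivation property of $\delta_{TM}=d$ to the mixed pair $(df,g)$, using $[df,g]_B=\rho_B(df)g=\{f,g\}_{\pi_M}$ and then Leibniz plus the anchor $\rho_B=\pi_M^\sharp$ (from \eqref{eq:basepoisson}) to pin down the bracket on all $1$-forms; this is available since the paper records the formulation in which $\delta$ is a derivation of the full Gerstenhaber bracket, so it is a matter of phrasing rather than a gap. (ii) The ``standard but not entirely trivial fact'' you flag---that a morphism of (double) vector bundles which is a diffeomorphism of total spaces is an isomorphism on sides and core---is indeed true and is used implicitly in the paper's proof as well, so your explicit attention to it is a welcome clarification rather than an obstacle.
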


This result follows from the fact that the anchor map $\rho_A: A\to TM$, besides being a Lie-algebroid map, is a Poisson morphism $(A,\pi_{A}) \to (TM,\pi_M^{tan})$, see \cite[Prop.~12.1.13]{Mac-book}. Since $\rho_A$ is the base map of 
$\pi_{A^*}^\sharp: T^* A^*\stackrel{\sim}{\to} TA^*$, it is an isomorphism of Poisson algebroids $(A,\pi_A)\cong (TM,\pi_M^{tan})$
when $\pi_{A^*}$ is symplectic. Dually, we have an isomorphism $(A^*,\pi_{A^*})\cong (T^*M_\pi,\omega_{can})$.



\subsection{Poisson groupoids and Lie theory}

Given a Lie groupoid $G\rightrightarrows M$, we denote its
Lie algebroid by $A_G$. We denote by ``Lie'' the usual Lie functor from Lie groupoids to Lie algebroids, defined on objects by $\mathrm{Lie}(G)=A_G$.

A Poisson structure $\pi$ on  $G\rightrightarrows M$ is called {\bf multiplicative} if the graph of the multiplication map  is a coisotropic submanifold of $G\times G\times \overline{G}$, where $\overline{G}$ denotes $G$ equipped with $-\pi$. The pair $(G,\pi)$ is a {\bf Poisson groupoid} \cite{We88}. Important classes of examples include Poisson-Lie groups (when $M$ is a point) and symplectic groupoids (when $\pi$ is symplectic). A multiplicative Poisson structure on a vector bundle (viewed as a groupoid with respect to fiberwise addition) is just a linear Poisson structure, so Poisson vector bundles are special cases of Poisson groupoids.

As shown by Mackenzie and Xu \cite{Mac-Xu,Mac-Xu2}, Poisson groupoids and Lie bialgebroids are related by differentiation and integration.
To see that, recall that the tangent and cotangent bundles of a Lie groupoid $G\rightrightarrows M$ carry natural Lie-groupoid structures, denoted by $TG\rightrightarrows TM$ and $T^*G\rightrightarrows A_G^*$. 
The key observation in \cite{Mac-Xu2} is then that 
a Poisson structure $\pi_G$ on $G$ is multiplicative if and only if 
\begin{equation}\label{eq:piG}
\pi_G^\sharp: T^*G\to TG
\end{equation}
is a morphism of Lie groupoids. Using the natural isomorphisms $A_{T^*G}\simeq T^*A_G$ and $A_{TG}\simeq TA_G$, one obtains a bivector field $\pi_A$ on $A_G$ by
$$
\pi_A^\sharp = \mathrm{Lie}(\pi^\sharp_G): T^*A_G\to TA_G;
$$
we will use the notation $\pi_A= \mathrm{Lie}(\pi_G)$.
It is shown in \cite{Mac-Xu} 
that 
$(A_G,\pi_A)$ is a Poisson algebroid, and hence $(A_G,A_G^*)$ is a Lie bialgebroid. 

Assuming that $G$ is source simply connected (which implies that so are $TG$ and $T^*G$),  Lie's second theorem\footnote{In this context, Lie's second theorem states that, for Lie groupoids $G_1$ and $G_2$ such that $G_1$ is source simply connected, any Lie algebroid morphism $A_{G_1} \to A_{G_2}$ is of the form $\mathrm{Lie}(\Phi)$ for a (unique) Lie-groupoid morphism $\Phi:G_1\to G_2$.} ensures that the map $\pi_G\mapsto \mathrm{Lie}(\pi_G)$ is a bijection between multiplicative Poisson structures on $G$ and infinitesimally multiplicative Poisson structures on $A_G$, see \cite{Mac-Xu2}. A special case of this result is the  correspondence between  symplectic groupoids and Poisson manifolds via differentiation and integration:
just notice that $\pi_G$ is symplectic (i.e., \eqref{eq:piG} is an isomorphism) if and only if so is $\pi_A$, in which case $(A,\pi_A)$ is the cotangent of a Poisson manifold 
by Proposition~\ref{prop:symplalgbs}.

Coisotropic subgroupoids of a Poisson groupoid $(G,\pi_G)$ are closely related to coisotropic subalgebroids of $(A_G,\pi_A)$. A result in this direction appeared in \cite[Thm.~1.5.9]{luca1} for groupoids and subgroupoids assumed to be source simply connected. We will need a refinement of this result, which follows from the next observation.

\begin{lemma}\label{lemma:1}
Suppose that $\Phi:G_1\to G_2$ is a Lie-groupoid morphism, $G_1$ is source-connected, and $H\subset G_2$ is a Lie subgroupoid. Then $\Phi(G_1)\subset H$ if and only if $\mathrm{Lie}(\Phi)(A_{G_1})\subset A_H$. 
\end{lemma}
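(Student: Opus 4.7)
The plan is to handle the two implications separately. The forward direction is immediate: an inclusion $\Phi(G_1)\subset H$ means $\Phi$ factors as $G_1\to H\hookrightarrow G_2$, and the Lie functor then produces $\mathrm{Lie}(\Phi) : A_{G_1} \to A_H \hookrightarrow A_{G_2}$, putting the image inside $A_H$. For the converse, I would exploit two standard tools: the $\Phi$-relatedness of right-invariant vector fields under a groupoid morphism, and the fact that in a source-connected Lie groupoid every arrow is reachable from a unit by a finite concatenation of flows of right-invariant vector fields.

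Assume $\mathrm{Lie}(\Phi)(A_{G_1})\subset A_H$, and let $N\subset M_2$ denote the base of $H$. Since $A_H$ is a bundle over $N$ and $\mathrm{Lie}(\Phi)$ covers $\Phi|_{M_1}$, the restriction $\Phi|_{M_1}$ automatically factors through $N$. For any $\alpha\in\Gamma(A_{G_1})$, let $\overrightarrow{\alpha}$ denote its right-invariant extension; since $\Phi$ intertwines right multiplications, one has $d\Phi\circ\overrightarrow{\alpha} = \overrightarrow{\mathrm{Lie}(\Phi)(\alpha)}\circ\Phi$. By hypothesis $\mathrm{Lie}(\Phi)(\alpha)$ takes values in $A_H$, whence $\overrightarrow{\mathrm{Lie}(\Phi)(\alpha)}$ is tangent to $H$ along $H$ (its right-invariant extension inside $H$ agrees with it there). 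Consequently the flow of $\overrightarrow{\alpha}$ starting at a unit $1_x\in M_1$ is mapped by $\Phi$ to the flow of $\overrightarrow{\mathrm{Lie}(\Phi)(\alpha)}$ starting at $1_{\Phi(x)}\in N\subset H$, which remains in $H$ for as long as it exists.

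Given $g\in G_1$, source-connectedness lets us express $g$ as the endpoint of a finite concatenation of such integral curves beginning at $1_{\mathsf{s}(g)}$; applying $\Phi$ to this concatenation yields a chain of flows each tangent to $H$ and starting in $H$, so $\Phi(g)\in H$, as desired. The main technical obstacle is justifying this last concatenation step: it rests on the classical fact that the right-invariant vector fields span $\ker(d\mathsf{s})$ pointwise, so the orbits of the pseudogroup they generate coincide with the connected components of the source fibers—this is the Lie-groupoid analogue of a connected Lie group being generated by its one-parameter subgroups, and I would invoke it as a standard result rather than reprove it.
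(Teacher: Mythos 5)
Your proof is correct in substance, but it takes a genuinely different route from the paper. The paper disposes of the converse in three lines by invoking Lie's second theorem: it integrates $\mathrm{Lie}(\Phi)$ to a morphism $\widetilde{\Phi}:\widetilde{G}_1\to H$ out of the source-simply-connected cover, uses uniqueness of integrations to get $i\circ\widetilde{\Phi}=\Phi\circ p$, and concludes from surjectivity of $p:\widetilde{G}_1\to G_1$ (source-connectedness of $G_1$). You instead argue directly with right-invariant vector fields, $\Phi$-relatedness, and the reachability of every arrow in a source-connected groupoid by concatenated flows --- in effect re-proving by hand the portion of Lie II that is actually needed. What your approach buys is self-containedness (no appeal to the ssc cover or to uniqueness of integrating morphisms); what the paper's buys is brevity and an automatic handling of one point you should treat more carefully: since $H$ may be an immersed, non-closed subgroupoid, tangency of the image velocity field to $H$ along $H$ does not by itself prevent the curve $\Phi(\phi_t(1_x))$ from leaving $H$; the honest argument is to develop the $A_H$-valued path $\mathrm{Lie}(\Phi)(\alpha(\gamma(t)))$ inside $H$ itself (this right-invariant ODE admits a solution on the whole time interval, by the usual patching/right-translation argument) and then identify it with $\Phi(\phi_t(1_x))$ by uniqueness of solutions in $G_2$. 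Also note that $\mathrm{Lie}(\Phi)(\alpha)$ is only a section along $\Phi|_{M_1}$, so the phrase ``the flow of $\overrightarrow{\mathrm{Lie}(\Phi)(\alpha)}$'' should be read as this time-dependent right-invariant ODE, and for the later segments of the concatenation (starting at non-unit points of $H$) you should add that right translation by an element of $H$ preserves $H$. With these standard points made explicit, your argument goes through.
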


\begin{proof}
The fact that $\Phi(G_1)\subset H$ implies that $\mathrm{Lie}(\Phi)(A_{G_1})\subset A_H$ is clear (and independent of the fact that $G_1$ is source connected).

Suppose that $\mathrm{Lie}(\Phi)(A_{G_1})\subset A_H$.
By Lie's second theorem there exists $\widetilde{\Phi}: \widetilde{G}_1\to H\subset G_2$ integrating $\mathrm{Lie}(\Phi)$, where 
$\widetilde{G}_1$ is the source-simply-connected groupoid integrating $A_{G_1}$.
Let $p:\widetilde{G}_1\to G_1$ be the natural groupoid morphism relating these Lie groupoids.
By uniqueness of the integrating morphism, we have that $i\circ \widetilde{\Phi}=\Phi \circ p:\widetilde{G}_1\to G_2$, 
where $i:H\to G_2$ is the inclusion.
It follows that $\Phi(G_1)=\Phi\circ p(\widetilde{G}_1)=i\circ \widetilde{\Phi}(\widetilde{G}_1)\subset H$.
\end{proof}

\begin{proposition}\label{prop:coisot}
Let $(G,\pi_G)$ be a Poisson groupoid and $\pi_A=\mathrm{Lie}(\pi_G)$. If a Lie subgroupoid $(S\toto N)\subset(G\toto M)$ is coisotropic then so is its Lie algebroid $A_S$ in $(A_G,\pi_A)$. Conversely, if $S$ is source-connected, then $S$ is coisotropic if so is $A_S$.
\end{proposition}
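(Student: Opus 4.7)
The plan is to reformulate coisotropicity of $S$ and of $A_S$ as inclusions of images under $\pi_G^\sharp$ and $\pi_A^\sharp$, respectively, and then apply Lemma~\ref{lemma:1} to the restriction of $\pi_G^\sharp$ to the conormal subgroupoid of $S$. Recall that $T^*G\toto A_G^*$ and $TG\toto TM$ are Lie groupoids, and for the Lie subgroupoid $S\toto N$ of $G$, both $TS\subset TG$ and the conormal bundle $\mathrm{Ann}(TS)\subset T^*G$ are naturally Lie subgroupoids (the latter over $\mathrm{Ann}(A_S)\subset A_G^*$). Under the canonical isomorphisms $A_{TG}\cong TA_G$ and $A_{T^*G}\cong T^*A_G$ recalled earlier, these inclusions differentiate to $TA_S\hookrightarrow TA_G$ and $\mathrm{Ann}(TA_S)\hookrightarrow T^*A_G$, respectively.

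By the definition of coisotropicity recalled in the footnote, $S$ is coisotropic in $(G,\pi_G)$ if and only if $\pi_G^\sharp(\mathrm{Ann}(TS))\subset TS$, and $A_S$ is coisotropic in $(A_G,\pi_A)$ if and only if $\pi_A^\sharp(\mathrm{Ann}(TA_S))\subset TA_S$. The restriction $\Phi:=\pi_G^\sharp|_{\mathrm{Ann}(TS)}\colon \mathrm{Ann}(TS)\to TG$ is a morphism of Lie groupoids, and since $\pi_A^\sharp=\mathrm{Lie}(\pi_G^\sharp)$, its Lie functor is $\mathrm{Lie}(\Phi)=\pi_A^\sharp|_{\mathrm{Ann}(TA_S)}\colon \mathrm{Ann}(TA_S)\to TA_G$. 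The forward implication is then immediate: if $\Phi(\mathrm{Ann}(TS))\subset TS$, applying Lie gives $\mathrm{Lie}(\Phi)(\mathrm{Ann}(TA_S))\subset TA_S$, so $A_S$ is coisotropic. For the converse, assuming $S$ is source-connected, the source map of $\mathrm{Ann}(TS)$ projects onto that of $S$ with affine-space fibers, so $\mathrm{Ann}(TS)$ is itself source-connected; Lemma~\ref{lemma:1} applied to $\Phi$ with Lie subgroupoid $H=TS\subset TG$ then yields the reverse implication.

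The main technical point requiring care is the identification, inside $A_{T^*G}\cong T^*A_G$, of $A_{\mathrm{Ann}(TS)}$ with $\mathrm{Ann}(TA_S)$ (and similarly $A_{TS}=TA_S$). This is the infinitesimal counterpart of the conormal construction and can be derived from the standard description of the Lie algebroid of a cotangent groupoid; it is also implicit in the approach of \cite[Thm.~1.5.9]{luca1}. Once these identifications are granted, the rest is a clean application of Lemma~\ref{lemma:1}, with source-connectedness of $\mathrm{Ann}(TS)$ being the only additional hypothesis needed for the converse direction.
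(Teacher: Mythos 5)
Your proposal is correct and follows essentially the same route as the paper: reformulate coisotropicity via $\pi_G^\sharp(\mathrm{Ann}(TS))\subset TS$, use the canonical identifications $A_{TS}\cong TA_S$ and $A_{\mathrm{Ann}(TS)}\cong \mathrm{Ann}(TA_S)$ inside $A_{TG}\cong TA_G$ and $A_{T^*G}\cong T^*A_G$, and apply Lemma~\ref{lemma:1} to $\pi_G^\sharp|_{\mathrm{Ann}(TS)}$ with the subgroupoid $TS\subset TG$, observing that $\mathrm{Ann}(TS)$ is source-connected when $S$ is. The only difference is that you spell out the source-connectedness of $\mathrm{Ann}(TS)$ (linear fibers over source fibers of $S$) a bit more explicitly than the paper does.
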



\begin{proof}
If $S\subset G$ is a Lie subgroupoid, then so is $\mathrm{Ann}(TS) \subset T^*G$, where $\mathrm{Ann}(TS)$ is the annihilator of $TS$. The canonical isomorphisms $A_{TG}\cong TA_G$ and $A_{T^*G}\cong T^*A_{G}$ identify
$A_{TS}\cong TA_S$ and
$A_{\mathrm{Ann}(TS)} \cong \mathrm{Ann}(TA_S)$, respectively. 
The result now follows from the previous lemma applied to the Lie groupoid morphism
$\pi^\sharp_G|_{\mathrm{Ann}(TS)}:\mathrm{Ann}(TS)\subset T^*G\to TG$ and the subgroupoid $TS\subset TG$ (noticing that $\mathrm{Ann}(TS)$ is source connected when $S$ is). 
\end{proof}


The previous proposition has the following corollaries.

\begin{itemize}
    \item Connected coisotropic subgroups of a Poisson Lie group are in bijection with coisotropic subalgebras of its Lie bialgebra, see e.g \cite[Prop. 2.3]{ciccoli}. (This follows since every Lie subalgebra integrates to a Lie subgroup.)
    \item Assuming $G_1$ source connected, a groupoid morphism $\Phi:(G_1,\pi_1)\to(G_2,\pi_2)$ between Poisson groupoids is Poisson if and only if it is so at the infinitesimal level \cite[Prop.~5.1.3]{BCdH} (cf. \cite[Thm.~1.5.10]{luca1}). 
    \item If $G\toto M$ is a symplectic groupoid and $S\toto N$ is a lagrangian subgroupoid, then $N\subset M$ is coisotropic; conversely, if $S\toto N$ is a source-connected subgroupoid with $N$ coisotropic, then $S\subset G$ is lagrangian, cf.  \cite[Thm.~5.4]{cattaneo}. (For this result, one uses the characterization of lagrangian subalgebroids of $(T^*M_\pi,\omega_{can})$ as conormal bundles $\mathrm{Ann}(TN)$ of coisotropic submanifolds $N \subseteq M$. )
\end{itemize}

\begin{remark}
In general, Proposition~\ref{prop:coisot} does not lead to a one-to-one correspondence between source-connected coisotropic subgroupoids and coisotropic Lie subalgebroids, due to the fact that injective Lie-algebroid morphisms may not admit an injective integration \cite{mm2}. As an example, let $G$ be the the (symplectic) pair groupoid of $({\mathbb C}^2,\omega_{can})$. Consider the map 
$
\mu:{\mathbb C}^2\to\R$, $\mu(z_1,z_2)=-\frac{1}{2}(2|z_1|^2+|z_2|^2),$ 
and take the coisotropic submanifold $N=\mu^{-1}(-1)$. Then $\mathrm{Ann}(TN)\then N$ is isomorphic (via the anchor map) to the characteristic distribution on $N$. The corresponding foliation has non-trivial holonomy (its leaf space is the ``teardrop'' orbifold),
and therefore cannot  be integrated by a subgroupoid of $G$ (which has trivial isotropies). \hfill $\diamond$
\end{remark}

\section{Double structures (from a Poisson standpoint)}\label{sec:DS}

We continue here the discussion about compatible Poisson structures, now on {\em double vector bundles} and related double structures. To begin with, we  review the notion of double vector bundle and some of their properties. 

We will follow the viewpoint from  \cite{GR} that a vector bundle $E\to M$ is completely characterized by the action $\mu_\lambda: E\to E$, $\lambda\in \mathbb{R}$, of the monoid $(\R,\cdot)$ via multiplication by scalars.
A vector-bundle map is the same as an equivariant map, and vector subbundles coincide with invariant submanifolds. From this perspective, a Poisson structure $\pi$ on $E$ is linear if and only if $(\mu_\lambda)_*\pi=\lambda\pi$ for all $\lambda\neq 0$.

A manifold $D$ carrying two vector bundle structures $D\to A$ and $D\to B$, referred to as {\em horizontal} and {\em vertical}, is a {\bf double vector bundle} if the corresponding scalar multiplications $\mu^h$ and $\mu^v$ commute: $\mu^h_\lambda \mu^v_\epsilon = \mu^v_\epsilon \mu^h_\lambda $ for all $\lambda,\epsilon$. In this case both $A=\mu^h_0(D)$ and $B=\mu^v_0(D)$ are vector bundles over $M= \mu^h_0\mu^v_0(D)$, called the {\bf side bundles}.
 We depict double vector bundles by 
 \begin{equation}\label{eq:DVB}
\begin{matrix}
D & \longrightarrow & A \\ \downarrow & & \downarrow\\ B & \longrightarrow & M.
\end{matrix}
\end{equation}

A {\bf morphism} (or map) of double vector bundles is a smooth equivariant map with respect to the horizontal and vertical multiplications by scalars, and a subbundle of a double vector bundle is an invariant submanifold with respect to both actions.

Given a double vector bundle $D$, its {\bf core} is defined by $C=\{d\in D: \mu^h_\lambda d= \mu^v_\lambda d\}$, which is also a vector bundle over $M$.
Alternatively, $C$ is the kernel of the double-vector-bundle map $D\to A\oplus B$ given by the projections on the side bundles,
$$
C \hookrightarrow D \to A\oplus B,  
$$
where $C$ is regarded as a double vector bundle with trivial sides, and $A\oplus B$ is a double vector bundle with trivial core.
This sequence splits, though non-canonically, giving an isomorphism  $D\cong A\oplus B\oplus C$. Such an isomorphism as called a {\bf splitting}.

Prototypical examples of double vector bundles include the {\bf tangent} and {\bf cotangent} double vector bundles of a vector bundle $E\to M$ \cite[Sec.~9.4]{Mac-book},
\begin{equation}\label{eq:tgcotg}
\begin{matrix}
TE & \longrightarrow & TM \\ \downarrow &  & \downarrow\\ E & \longrightarrow & M,
\end{matrix}
\qquad\;\;\;\;\;
\begin{matrix}
T^*E & \longrightarrow & E^* \\ \downarrow &  & \downarrow\\ E & \longrightarrow & M.
\end{matrix}
\end{equation}
The core of $TE$ is identified with $E$, $T^*E$ has core $T^*M$, and a splitting in these cases is the same as a linear connection on $E$.





Double vector bundles have a rich duality theory, see e.g. \cite[Sec.~9.2]{Mac-book}.
For a double vector bundle $D$ as in \eqref{eq:DVB}, we can consider its {\bf vertical dual} $D^*\to B$
and its {\bf horizontal dual}  $D ^\bullet \to A$. As it turns out, both fit into double vector bundles as follows (with core bundles indicated in the middle of the diagrams):
$$
\begin{matrix}
D^* & \longrightarrow & C^* \\ \downarrow & A^* & \downarrow\\ B & \longrightarrow & M,
\end{matrix}
\qquad\;\;\;\;\;
\begin{matrix}
D^\bullet & \longrightarrow & A \\ \downarrow & B^* & \downarrow\\ C^* & \longrightarrow & M.
\end{matrix}
$$
The tangent and cotangent double vector bundles  \eqref{eq:tgcotg}
are related by vertical duality.

By interchanging the horizontal and vertical directions one defines the {\bf flip} double vector bundle $\mathrm{fl}(D)$. 
There is a  special pairing  
$\mathrm{fl}(D^*)\times_{C^*}D^\bullet\to \R$ 
that induces an isomorphism (\cite[Thm.~9.2.2]{Mac-book})
\begin{equation}\label{eq:duality}
\mathrm{fl}(D^*)\cong (D^\bullet)^*
\end{equation}
that is $\id$ on the side bundles and $-\id$ on the core.
Therefore, modulo flips, taking successive  duals of a  double vector bundle interchanges $D$, $D^*$ and $D^\bullet$.

Just as the tangent and cotangent bundles of a vector bundle define double vector bundles, the tangent and cotangent bundles of a double vector bundle $D$ are ``triple'' vector bundles, referred to as the {\em tangent and cotangent cubes}:
\begin{equation}\label{eq:cubes}
\begin{matrix}
\xymatrix@R=6pt@C=6pt{
TD \ar[rr] \ar[dd] \ar[rd]&  & {TA} \ar'[d][dd] \ar[dr]& \\
 & D \ar[dd] \ar[rr]&  & A\ar[dd]\\
TB \ar'[r][rr] \ar[dr]&  & TM \ar[dr]& \\
& B \ar[rr]& & M,}
\end{matrix} 
\qquad \;\;\; 
\begin{matrix}
\xymatrix@R=6pt@C=6pt{
T^*D \ar[rr] \ar[dd] \ar[rd]&  & {D^\bullet} \ar'[d][dd] \ar[dr]& \\
 & D \ar[dd] \ar[rr]&  & A\ar[dd]\\
D^* \ar'[r][rr] \ar[dr]&  & C^* \ar[dr]& \\
& B \ar[rr]& & M.}
\end{matrix}  
\end{equation}


\subsection{Poisson double vector bundles and VB-algebroids}\label{subsec:VBalg}


A {\bf Poisson double vector bundle}
is a double vector bundle $D$ 
equipped with a Poisson structure $\pi$ that is {\em double linear}, namely it is linear with respect to both  horizontal and vertical vector-bundle structures.
In analogy with \eqref{eq:linear}, a double linear Poisson structure is characterized by the property that $\pi^\sharp$ is linear with respect to $T^*D\to D^*$ and $T^*D\to D^\bullet$, hence giving a morphism of cubes:
\begin{equation}\label{eq:poissoncubemap}
\begin{matrix}
\xymatrix@R=6pt@C=6pt{
T^*D \ar[rr] \ar[dd] \ar[rd]&  & {D^\bullet} \ar'[d][dd] \ar[dr]& \\
 & D \ar[dd] \ar[rr]&  & A\ar[dd]\\
D^* \ar'[r][rr] \ar[dr]&  & C^* \ar[dr]& \\
& B \ar[rr]& & M.}
\end{matrix}  
\stackrel{\pi^\sharp}{\longrightarrow}
\begin{matrix}
\xymatrix@R=6pt@C=6pt{
TD \ar[rr] \ar[dd] \ar[rd]&  & {TA} \ar'[d][dd] \ar[dr]& \\
 & D \ar[dd] \ar[rr]&  & A\ar[dd]\\
TB \ar'[r][rr] \ar[dr]&  & TM \ar[dr]& \\
& B \ar[rr]& & M.}
\end{matrix} 
\end{equation}

A (horizontal) {\bf VB-algebroid} is a double vector bundle whose horizontal vector bundle is equipped  with a Lie-algebroid structure for which the vertical multiplication by scalars is by Lie-algebroid maps. 
Vertical VB-algebroids are defined analogously. We depict horizontal and vertical VB-algebroids with diagrams as follows:
\begin{equation}\label{eq:vba}
\begin{matrix}
\Omega & \Longrightarrow & A \\ \downarrow & & \downarrow\\ B & \Longrightarrow & M,
\end{matrix}\qquad \qquad 
\begin{matrix}
\Omega & \longrightarrow & A \\ \Downarrow & & \Downarrow\\ B & \longrightarrow & M.
\end{matrix}
\end{equation}
As the notation suggests, a horizontal VB-algebroid descends to a Lie-algebroid structure on the horizontal side bundle, and similarly for a vertical VB-algebroid.  A {\bf VB-algebroid map} is a Lie-algebroid map that is linear with respect to the additional vector-bundle structure. 

\begin{remark}
VB-algebroids were introduced in 
\cite{mac-double} under the name of {\em LA-vector bundles}; the compatibility in this original formulation was that all vector-bundle structure maps be Lie-algebroid maps (see also \cite{GM}).
We observed in \cite[Thm 3.4.3]{BCdH} that it suffices to impose the compatibility with scalar multiplication.
\hfill $\diamond$
\end{remark}

Double linear Poisson structures and (horizontal) VB-algebroids are related by (horizontal) duality \cite{mac-crelle} (see also \cite{GM})
$$
\begin{matrix}
(D,\pi) & \longrightarrow & A \\ \downarrow & & \downarrow\\ B & \longrightarrow & M
\end{matrix}\;\;\; \rightleftharpoons \;\;\;  
\begin{matrix}
D^\bullet & \Longrightarrow & A \\ \downarrow & & \downarrow\\ C^* & \Longrightarrow & M.
\end{matrix}
$$
In turn, by the isomorphism \eqref{eq:duality} coming from the special pairing, the (vertical) dual of a (horizontal)  VB-algebroid is naturally a (horizontal) VB-algebroid,
$$
\begin{matrix}
\Omega & \Longrightarrow & A \\ \downarrow & & \downarrow\\ B & \Longrightarrow & M
\end{matrix}\;\;\; \rightleftharpoons \;\;\;  
\begin{matrix}
\Omega^* & \Longrightarrow & C^* \\ \downarrow & & \downarrow\\ B & \Longrightarrow & M.
\end{matrix}
$$
Of course analogous statements hold switching the horizontal and vertical directions.

\begin{example}\label{ex:tancot}
The tangent bundle $TE\then E$ of any vector bundle $E\to M$ is a VB-algebroid with respect to $TE\to TM$. If $(E,\pi)\to M$ is a Poisson vector bundle, its cotangent bundle 
$T^*E\then E$ is a VB-algebroid with respect to $T^*E\to E^*$.
\end{example}

\begin{example}
For a Lie algebroid $A\then M$, the corresponding tangent and cotangent Lie algebroids $TA\then TM$ and $T^*A\then A^*$ are VB-algebroids with respect to the vector-bundle structures $TA\to A$ and $T^*A\to A$, respectively. 
\end{example}


\begin{example}\label{ex:LArepresentations}
(a) Consider a VB-algebroid with trivial bottom vector bundle,
\begin{equation}\label{eq:rep}
\begin{matrix}
\Omega & \longrightarrow & A \\ \Downarrow & & \Downarrow\\ M & \longrightarrow & M.
\end{matrix}
\end{equation}
The core $C$ is the kernel of the top horizontal map, and we have a natural splitting $\Omega= A\oplus C$. 
The compatibility between the Lie-algebroid structure and the horizontal linear structure on $\Omega$ implies that
$[\Gamma(A),\Gamma(C)]\subseteq \Gamma(C)$, giving a representation of $A\then M$ on $C\to M$, in such a way that $\Omega$ is their semi-direct product \cite[Sec.~7.1]{Mac-book}. 

(b) A VB-algebroid with trivial core (called ``vacant''),
\begin{equation}\label{eq:rep2}
\begin{matrix}
\Omega & \longrightarrow & A \\ \Downarrow & & \Downarrow\\ B & \longrightarrow & M,
\end{matrix}
\end{equation}
is equivalent to a representation of the Lie algebroid $A\then M$ on $B\to M$, in such a way that $\Omega$ is identified with the action Lie algebroid $A\ltimes B\then B$, see \cite[Sec.~4.1]{Mac-book}.
\end{example}

So Lie-algebroid representations can be seen as particular VB-algebroids in two ways, which are related by VB-algebroid duality.
In general VB-algebroids codify more general types of representations, known as 2-term representations up to homotopy, see \cite{GM}.
The following is another special class of examples.

\begin{example}\label{ex:2term}
A VB-algebroid of the form
\begin{equation}\label{eq:2termM}
\begin{matrix}
\Omega & \Longrightarrow & A \\ \downarrow & & \downarrow\\ M & \Longrightarrow & M
\end{matrix}
\end{equation}
is completely encoded in the map $\rho|_C: C\to A$ given by the restriction of the anchor map $\rho: \Omega\to TA$ to the core.
In fact, given any vector bundles $A$ and $C$ over $M$ and a vector-bundle map $\partial: C\to A$ over the identity, 
we can regard $C\to M$ as a trivial Lie algebroid and let it act on $A$ \cite[$\S$ 4.1]{Mac-book}   by setting $\Gamma(C)\to \mathfrak{X}(A)$, $c\mapsto \partial(c)^\vee$, where $u^\vee \in \mathfrak{X}(A)$ denotes the vertical vector field on $A$ determined by $u\in \Gamma(A)$. 
The corresponding action Lie algebroid $C\oplus A \then A$ is 
of the form \eqref{eq:2termM}. Conversely any VB-algebroid as in \eqref{eq:2termM} is such an action Lie algebroid for $\partial=\rho|_C$ (c.f. \cite[Sec.~4.1]{DJO}).

\end{example}


The next result concerns {\em functorial properties} of the duality between VB-algebroids and double linear Poisson structures. For a map $\Phi$ between double vector bundles, we will write  
$\mathcal{R}(\Phi)$, $\mathcal{R}(\Phi^\bullet)$ and $\mathcal{R}(\Phi^*)$ for the graph of $\Phi$ and the horizontal and vertical dual relations, respectively.

\begin{proposition}\label{prop:doublelinear}
Let $\Omega_1$ and $\Omega_2$  be (horizontal) VB-algebroids, and let $\Phi:\Omega_1\to \Omega_2$ be a map of double vector bundles. The following are equivalent:
\begin{enumerate}[(a)]
    \item $\Phi$ is a VB-algebroid morphism;
    \item $\mathcal{R}(\Phi)\subset \Omega_1\times \Omega_2$ is a VB-subalgebroid;
    \item $\mathcal{R}(\Phi^\bullet) \subset \Omega_1^\bullet\times \overline{\Omega_2^\bullet}$ is coisotropic;
    \item $\mathcal{R}(\Phi^*)\subset \Omega_1^*\times \Omega_2^*$ is a VB-subalgebroid.
\end{enumerate}
\end{proposition}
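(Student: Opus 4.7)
The plan is to show the three equivalences (a)$\Leftrightarrow$(b), (a)$\Leftrightarrow$(c), and (a)$\Leftrightarrow$(d) individually, pivoting in each case on the condition that $\Phi$ be a Lie-algebroid morphism in the horizontal direction. Since $\Phi$ is already assumed to be a map of double vector bundles, each of the subsets $\mathcal{R}(\Phi)$, $\mathcal{R}(\Phi^\bullet)$ and $\mathcal{R}(\Phi^*)$ is automatically a sub-double-vector-bundle of the corresponding double-vector-bundle product, so the content of (b), (c), (d) reduces to a purely Lie-algebraic or Poisson-geometric compatibility in the horizontal direction.

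For (a)$\Leftrightarrow$(b), I would invoke the standard fact, applied to the horizontal Lie algebroids $\Omega_i \Rightarrow A_i$, that a vector-bundle map between Lie algebroids is a morphism if and only if its graph is a Lie subalgebroid of the product. For (a)$\Leftrightarrow$(c), I would apply the Lie-algebroid/linear-Poisson duality recalled in Section~\ref{sec:prelim} (in particular \cite[Thm.~10.4.9]{Mac-book}): the horizontal Lie-algebroid structure on $\Omega_i$ corresponds, under horizontal duality, to the linear Poisson structure on $\Omega_i^\bullet$, and a vector-bundle map between such is a Lie-algebroid morphism exactly when its dual relation is coisotropic in the product Poisson manifold $\Omega_1^\bullet \times \overline{\Omega_2^\bullet}$.

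The equivalence (a)$\Leftrightarrow$(d) is the most delicate. I would rely on the principle, implicit in the paragraph following \eqref{eq:duality}, that vertical duality sends horizontal sub-VB-algebroids to horizontal sub-VB-algebroids of the dual VB-algebroid via the vertical annihilator. Since $\mathcal{R}(\Phi^*)\subset \Omega_1^*\times\Omega_2^*$ coincides, up to a sign on one factor, with the vertical annihilator of $\mathcal{R}(\Phi)\subset \Omega_1\times\Omega_2$, this combined with (a)$\Leftrightarrow$(b) yields (a)$\Leftrightarrow$(d). The main obstacle is to justify this annihilator-preservation statement itself: one must check that the horizontal Lie bracket on $\Omega^*$, defined via the pairing with $\Omega$, transports sub-Lie-algebroid conditions on $\Omega$ to their annihilators. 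Alternatively, one can bypass this by transporting the coisotropicity condition of (c) directly through the isomorphism $\mathrm{fl}(D^*)\cong (D^\bullet)^*$ from \eqref{eq:duality} to reinterpret it as the VB-subalgebroid condition in (d).
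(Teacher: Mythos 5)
Your strategy coincides with the paper's: the equivalences (a)$\Leftrightarrow$(b)$\Leftrightarrow$(c) are dispatched as standard facts about graphs and dual relations of Lie-algebroid morphisms (the paper simply calls them well known after forgetting the vertical bundle structure), and (d) is reduced to the statement that $\mathcal{R}(\Phi^*)$ is, up to a sign, the vertical annihilator of the graph $\mathcal{R}(\Phi)$. However, you leave unproven precisely the step that carries all the content of (d): the claim that for a horizontal VB-algebroid $\Omega$, a double vector subbundle $\Omega'\subset\Omega$ is a VB-subalgebroid if and only if $\mathrm{Ann}(\Omega')\subset\Omega^*$ is a VB-subalgebroid. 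You explicitly flag this as ``the main obstacle'' but offer no argument for it, and your fallback route through the isomorphism $\mathrm{fl}(D^*)\cong(D^\bullet)^*$ is likewise only sketched and would require the same kind of justification (plus careful bookkeeping of the flip, the $-\mathrm{id}$ on the core, and the opposite Poisson structure coming from $\overline{\Omega_2^\bullet}$). As it stands, the proposal identifies the right pivot but does not close it.

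The paper closes this gap as follows: it first replaces $\Phi$ by $\mu^v_{-1}\circ\Phi$ (vertical scalar multiplication by $-1$ on $\Omega_2$), which is a VB-algebroid map iff $\Phi$ is, so that $\mathcal{R}(\Phi^*)=\mathrm{Ann}(\mathcal{R}(\mu^v_{-1}\circ\Phi))$ exactly, disposing of your ``up to a sign'' caveat; and it then proves the annihilator lemma by observing that $\Omega'$ and $\mathrm{Ann}(\Omega')$ are double vector bundles over the \emph{same} vertical base Lie algebroid $B'\Rightarrow M'$, and that the category of VB-algebroids over a fixed base Lie algebroid is closed under kernels, cokernels and duals \cite[Cor.~3.4.4]{BCdH} (so, for instance, $\mathrm{Ann}(\Omega')$ arises as the kernel of the dual of the inclusion $\Omega'\hookrightarrow\Omega|_{B'}$, hence is a VB-subalgebroid, and the converse follows by double annihilators). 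If you incorporate this argument, or an equivalent direct verification that the pairing-defined bracket on $\Omega^*$ restricts appropriately to annihilators, your proof becomes complete and essentially identical to the paper's.
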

\begin{proof}
The equivalences between $(a)$, $(b)$ and $(c)$ are well known (once we forget the vertical vector-bundle structure). We will show the equivalence between $(b)$ and $(d)$.
Denote by $\mu^v$ the vertical scalar multiplication on $\Omega_2$, and consider the double-vector-bundle map $\mu^v_{-1}\circ \Phi: \Omega_1\to \Omega_2$, noticing that this is a VB-algebroid map if and only if so is $\Phi$. Since $\mathcal{R}(\Phi^*)=\mathrm{Ann}(\mathcal{R}(\mu^v_{-1}\circ \Phi))$, the result follows from the next remark:
Given $\Omega$ a (horizontal) VB-algebroid, a double vector subbundle $\Omega'\subset \Omega$ is a VB-subalgebroid if and only if its annihilator $\mathrm{Ann}(\Omega')\subset \Omega^*$ is a VB-subalgebroid.
In fact, the double vector bundles $\Omega'$ and $\mathrm{Ann}(\Omega')$ have the same (vertical) base, given by a Lie algebroid $B'\then M'$, and the category of VB-algebroids over $B'\then M'$ is closed under kernels, cokernels and duals, see \cite[Cor.~3.4.4]{BCdH}.
\end{proof}

%

\subsection{Poisson VB-algebroids and double Lie algebroids}\label{subsec:DLA}


A (horizontal) {\bf Poisson VB-algebroid} (or a {\bf PVB-algebroid} for short)
consists of 
a VB-algebroid $\Omega$ with a Poisson structure $\pi$,
$$
\begin{matrix}
(\Omega,\pi) & \Longrightarrow & A \\ \downarrow & & \downarrow\\ B & \Longrightarrow & M,
\end{matrix}
$$
such that $\pi$ makes $\Omega\then A$ into a Poisson algebroid and $\Omega\to B$ into a Poisson vector bundle. 
Note that the horizontal dual $\Omega^\bullet$ is again a PVB-algebroid. Vertical PVB-algebroids are defined analogously.

\begin{example}
Given a Lie algebroid $A\then M$, we saw that  $T^*A\then A^*$  is a VB-algebroid with respect to $T^*A\to A$; together with the canonical symplectic structure, $T^*A$ is a PVB-algebroid. On the other hand, any Poisson structure $\pi$ such that $(A,\pi) \then M$ is a Poisson algebroid  makes $TA\then TM$ into a PVB-algebroid with respect to the tangent lift of $\pi$.
\end{example}

We will be especially concerned with the dual objects to PVB-algebroids with respect to the vector-bundle structure.
A {\bf double Lie algebroid} is a double vector bundle $\Omega$ that is both a horizontal and a vertical VB-algebroid,
\begin{equation}\label{eq:DLAdiag}
\begin{matrix}
\Omega & \Longrightarrow & A \\ \Downarrow & & \Downarrow\\ B & \Longrightarrow & M,
\end{matrix}
\end{equation}
with the compatibility condition that its (vertical) dual is a PVB-algebroid.
Morphisms of double Lie algebroids are morphisms of double vector bundles that are also morphisms of VB-algebroids in both directions.
The next proposition describes the behavior of morphisms under the (vertical) duality between double Lie algebroids and PVB-algebroids,
\begin{equation}\label{dlapvbdual}
\begin{matrix}
\Omega & \Longrightarrow & A \\ \Downarrow & & \Downarrow\\ B & \Longrightarrow & M
\end{matrix}\;\;\; \rightleftharpoons \;\;\;  
\begin{matrix}
(\Omega^*,\pi) & \Longrightarrow & C^* \\ \downarrow & & \downarrow\\ B & \Longrightarrow & M.
\end{matrix}
\end{equation}

\begin{proposition}\label{prop:DLAmap}
A double linear map $\Phi: \Omega_1\to \Omega_2$ between double Lie algebroids with cores $C_1$ and $C_2$ is a morphism of double Lie algebroids if and only if its (say, vertical) dual relation $\mathcal{R}(\Phi^*)$ is a coisotropic subalgebroid  of $\Omega_1^*\times \overline{\Omega_2^*}\then C^*_1\times C^*_2$. 
\end{proposition}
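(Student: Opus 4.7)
The plan is to reduce the statement to two applications of Proposition \ref{prop:doublelinear}, one for each of the two VB-algebroid structures comprising a double Lie algebroid. By the definition recalled just before the statement, $\Phi$ is a morphism of double Lie algebroids iff it is simultaneously a VB-algebroid morphism with respect to the horizontal LA structures $\Omega_i \Rightarrow A_i$ and with respect to the vertical LA structures $\Omega_i \Rightarrow B_i$. The goal is thus to rephrase each of these two conditions as a single geometric condition on $\mathcal{R}(\Phi^*) \subset \Omega_1^* \times \overline{\Omega_2^*}$.

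For the horizontal condition I would apply Proposition \ref{prop:doublelinear} directly, using the equivalence $(a)\Leftrightarrow(d)$: $\Phi$ is a horizontal VB-algebroid morphism iff $\mathcal{R}(\Phi^*)$ is a VB-subalgebroid of $\Omega_1^* \times \Omega_2^* \Rightarrow C_1^* \times C_2^*$, where $\Omega_i^* \Rightarrow C_i^*$ denotes the VB-algebroid obtained by vertically dualizing the horizontal LA on $\Omega_i$. For the vertical condition I would apply the same proposition with the roles of horizontal and vertical interchanged, invoking $(a)\Leftrightarrow(c)$: $\Phi$ is a vertical VB-algebroid morphism iff $\mathcal{R}(\Phi^*)$ is coisotropic in $\Omega_1^* \times \overline{\Omega_2^*}$ with respect to the linear Poisson structure on each $\Omega_i^*$ obtained by dualizing the vertical LA on $\Omega_i$. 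Conjoining the two equivalences, $\Phi$ is a double Lie algebroid morphism iff $\mathcal{R}(\Phi^*)$ is simultaneously a VB-subalgebroid and coisotropic, i.e.\ a coisotropic subalgebroid of the PVB-algebroid $\Omega_1^* \times \overline{\Omega_2^*} \Rightarrow C_1^* \times C_2^*$ described in \eqref{dlapvbdual}.

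The step most deserving of care, and the one I expect to be the mild obstacle, is the bookkeeping in the second application: after swapping horizontal and vertical directions, the ``horizontal dual'' appearing in Proposition \ref{prop:doublelinear} must be reinterpreted as what is in our original notation the vertical dual $\Omega^*$. One has to verify that this reinterpretation produces precisely the linear Poisson structure on $\Omega^* \to B$ coming from the vertical LA on $\Omega$---the same Poisson structure that, together with the horizontal LA on $\Omega^* \Rightarrow C^*$, yields the PVB-algebroid structure of \eqref{dlapvbdual}. Once this identification is pinned down, both applications refer to the same submanifold $\mathcal{R}(\Phi^*)$, and the two structures on $\Omega_i^*$ assemble into the product PVB-algebroid, so that the conjunction of the two conditions is exactly the definition of a coisotropic subalgebroid.
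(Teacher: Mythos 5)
Your proposal is correct and follows essentially the same route as the paper's proof: the paper likewise invokes Proposition~\ref{prop:doublelinear}, using part (d) for the horizontal VB-algebroid structures (giving the subalgebroid condition on $\mathcal{R}(\Phi^*)$) and part (c), with horizontal and vertical interchanged, for the vertical structures (giving coisotropicity with respect to the linear Poisson structure of \eqref{dlapvbdual}). The bookkeeping point you flag—that the "horizontal dual" of the flipped structure is the vertical dual $\Omega^*$ carrying the Poisson structure dual to $\Omega\then B$—is exactly the identification implicit in the paper's one-line argument.
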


\begin{proof}
By Proposition \ref{prop:doublelinear} (parts $(c)$ and $(d)$), $\Phi$ is a morphism of horizontal VB-algebroids if and only if the relation $\mathcal{R}(\Phi^*)$ is a subalgebroid, and it is a morphism of vertical VB-algebroids if and only if this relation is coisotropic.
\end{proof}

Double Lie algebroids were introduced by Mackenzie (see e.g. \cite{mac-crelle})  as second-order infinitesimal counterparts of double Lie groupoids, see  \cite{mac-double1,mac-double2}. 
The tangent bundle of any Lie algebroid $A\then M$ is naturally a double Lie algebroid,
\begin{equation}\label{eq:dlaTA}
\begin{matrix}
TA & \Longrightarrow & A \\ \Downarrow & & \Downarrow\\ TM & \Longrightarrow & M,
\end{matrix}
\end{equation}
and the horizontal and vertical anchors in a double Lie algebroid $\Omega$, $\rho_{hor}$ and $\rho_{ver}$, are morphisms of double Lie algebroids, 
\begin{equation}\label{eq:anchordla}
\begin{matrix}
\Omega & \Longrightarrow & A \\ \Downarrow & & \Downarrow\\  B &  \Longrightarrow & M,
\end{matrix}\qquad \stackrel{\rho_{hor}}{\longrightarrow} \qquad 
\begin{matrix}
TA & \Longrightarrow & A \\ \Downarrow & & \Downarrow\\ TM & \Longrightarrow & M.
\end{matrix}
\end{equation}
Indeed, in terms of the dual PVB-algebroid $(D=\Omega^\bullet,\pi)$, $\rho_{hor}$ is the restriction  of $\pi^\sharp$ in \eqref{eq:poissoncubemap} to the right face of the cubes (in such a case of a vertical Poisson VB-algebroid, the left and right faces of the cotangent cube are double Lie algebroids).

\begin{example}\label{ex:matchedpair}
A {\it vacant} double Lie algebroid $\Omega$ is one with trivial core. With the notations of  \eqref{eq:DLAdiag}, we have $\Omega= A\oplus B$ as a double vector bundle, and by Example~\ref{ex:LArepresentations} (b), its VB-algebroid structures come from representations of $A\then M$ on $B$ and of $B\then M$ on $A$. Mackenzie showed \cite{mac-notion,mac-crelle} that such pair of representations defines a double Lie algebroid if and only if they form a {\em matched pair} \cite{mokri}.
\end{example}

\subsection{Poisson VB-groupoids and LA-groupoids}\label{subsec:LAgrp}
We will briefly recall some global versions of the infinitesimal structures discussed in $\S$~\ref{subsec:VBalg} and $\S$~\ref{subsec:DLA}. 

A {\bf VB-groupoid}
\begin{equation}\label{eq:VBgrpdiag}
\begin{matrix}
\Gamma & \rightrightarrows & A \\ 
\downarrow &  & \downarrow\\ 
H & \rightrightarrows & M
\end{matrix}
\end{equation}
consists of Lie groupoids and vector bundles as in the diagram above such the scalar multiplication of $\Gamma\to H$ is by groupoid morphisms (covering the scalar multiplication on $A\to M$). As shown in \cite{BCdH}, this is simpler but equivalent
to previous definitions \cite{mac-double1} (see also \cite{GM2}). A {\bf VB-groupoid map}  $\Gamma_1\to \Gamma_2$ is a Lie groupoid morphism that is also linear.
The connection between VB-groupoids and VB-algebroids via differentiation and integration is explained in \cite[Sec.~4]{BCdH}.

Similarly to double vector bundles, any VB-groupoid as above has a {\bf core} vector bundle $C\to M$,  defined as $\ker (s:\Gamma|_M\to A)$. The duality theory for VB-groupoids was largely developed by Mackenzie, see e.g. \cite[$\S$ 11.2]{Mac-book}.
Given a VB-groupoid $\Gamma$, its dual $\Gamma^*$ carries a VB-groupoid structure over the dual of the core:
\begin{equation}\label{eq:VBdual}
\begin{matrix}
\Gamma^* & \rightrightarrows & C^* \\ 
\downarrow &  & \downarrow\\ 
H & \rightrightarrows & M.
\end{matrix}
\end{equation}

Natural examples of VB-groupoids parallel those of VB-algebroids in Section~\ref{subsec:VBalg},
including double vector bundles (by viewing one of the vector-bundle structures as a Lie groupoid), and tangent and cotangent bundles of Lie groupoids \cite[$\S 11.2$ and $\S 11.3$]{Mac-book}. 
We single out below the global counterparts of Examples~\ref{ex:LArepresentations} and \ref{ex:2term}.

\begin{example}\label{ex:VBgrRep}
Representation of Lie groupoids  on vector bundles  can be seen as VB-groupoids in two ways: either by their semi-direct products, which are characterized by VB-groupoids with a trivial side (analogous to \eqref{eq:rep})
or by their associated action groupoids, which correspond to VB-groupoids with trivial core. Similarly to Example~\ref{ex:LArepresentations}, these types of VB-groupoids are related by duality. 
\end{example}

\begin{example}\label{ex:VBgr2term}
Just as in Example~\ref{ex:2term}, a VB-groupoid of type
$$
\begin{matrix}
\Gamma & \rightrightarrows & A \\ 
\downarrow &  & \downarrow\\ 
M & \rightrightarrows & M
\end{matrix}
$$
is equivalent to a vector-bundle map $\partial: C\to A$. 
In this case $\Gamma$ has the structure of an action groupoid for the action of the vector bundle $C\to M$ (regarded as a Lie groupoid with respect to fiberwise addition) on $A\to M$ by $c\cdot a = a + \partial (c)$.
\end{example}

In general, VB-groupoids codify representations up to homotopy on 2-term chain complexes \cite{GM2}.

For VB-groupoids $\Gamma_1$ and $\Gamma_2$ over the same base $H\toto M$, given a VB-groupoid morphism $\Phi: \Gamma_1\to \Gamma_2$ covering the identity on $H\toto M$, the dual map $\Phi^*: \Gamma_2^*\to \Gamma_1^*$ is also a VB-groupoid morphism. When the base is not the same we still have the following:

\begin{proposition}\label{prop:VBgmap}
A vector bundle map $\Phi: (\Gamma_1\to H_1) \to (\Gamma_2\to H_2)$ between VB-groupoids 
is a VB-groupoid morphism if and only if $\mathcal{R}(\Phi^*)\subset \Gamma_1^*\times \Gamma_2^*$ is a VB-subgroupoid. Moreover, $\mathcal{R}(\Phi^*)$ is source-connected provided so is $H_1$ (or $\Gamma_1)$.
\end{proposition}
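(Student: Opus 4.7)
The plan is to mirror the argument of Proposition~\ref{prop:doublelinear}(b)$\Leftrightarrow$(d) in the Lie-groupoid setting. The starting point is the standard characterization that $\Phi$ is a VB-groupoid morphism if and only if its graph $\mathrm{graph}(\Phi)\subset \Gamma_1\times \Gamma_2$ is a VB-subgroupoid (covering $\mathrm{graph}(\phi|_{H_1})\subset H_1\times H_2$, where $\phi=\Phi|_{H_1}$). This reduces the claim to relating $\mathrm{graph}(\Phi)$ with $\mathcal{R}(\Phi^*)$ under annihilator/duality.

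The key identity I would exploit is
$$
\mathcal{R}(\Phi^*)\;=\;(\mu_{-1}\times \id)\bigl(\mathrm{Ann}(\mathrm{graph}(\Phi))\bigr),
$$
where $\mu_{-1}$ denotes the $-1$ scalar multiplication in $\Gamma_1^*$. Since $\mu_{-1}$ is a VB-groupoid automorphism (scalar multiplication acts by groupoid maps by the very definition of a VB-groupoid), this identity reduces the desired equivalence to the annihilator duality: a double vector subbundle $W\subset \Gamma$ of a VB-groupoid is a VB-subgroupoid if and only if its annihilator $\mathrm{Ann}(W)\subset \Gamma^*$ is a VB-subgroupoid of the dual VB-groupoid. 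I would establish this VB-groupoid analog of \cite[Cor.~3.4.4]{BCdH} either categorically, by observing that the category of VB-groupoids over a fixed side groupoid $H\toto M$ is closed under duals and kernels, so that the short exact sequence $0\to \mathrm{Ann}(W)\to \Gamma^*\to W^*\to 0$ is an exact sequence of VB-groupoids; or directly, by unpacking the description of the groupoid structure on $\Gamma^*$ in terms of the canonical pairing with $\Gamma$.

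For the source-connectedness assertion, I would invoke the structural fact that, in any VB-groupoid $\Gamma\to H$, the source fibers of $\Gamma$ are affine bundles (with model fiber a core fiber) over the source fibers of $H$. This yields, on the one hand, that $\Gamma_1$ is source-connected if and only if $H_1$ is, and on the other hand, applied to $\mathcal{R}(\Phi^*)$ (whose base subgroupoid is identified with $H_1$ via projection onto the first factor of $\mathrm{graph}(\phi|_{H_1})$), that source-connectedness of $H_1$ implies source-connectedness of $\mathcal{R}(\Phi^*)$.

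I expect the main obstacle to be a careful proof of the annihilator duality for VB-subgroupoids. This is the global counterpart of the algebroid-level input to Proposition~\ref{prop:doublelinear}, and while it is natural and expected, its verification requires a clean unpacking of the VB-groupoid structure on duals; once it is in hand, the rest of the argument is a formal unwinding.
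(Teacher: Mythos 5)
Your proposal is correct and takes essentially the same route as the paper: the paper likewise reduces to the graph criterion, handles the sign by a $\mu_{-1}$ twist together with the identification of $\mathcal{R}(\Phi^*)$ with an annihilator, deduces the annihilator duality from the fact that kernels, cokernels and duals of VB-groupoids over a fixed base are well defined (\cite[Sec.~3.2]{BCdH}), and obtains source-connectedness from the fact that the top groupoid of a VB-groupoid is source-connected iff its base groupoid is (\cite[Rem.~3.1.1]{BCdH}), applied to $\mathcal{R}(\Phi^*)$ viewed as a VB-groupoid over $\mathrm{graph}(\Phi|_{H_1})\cong H_1$. The only cosmetic difference is that you apply $\mu_{-1}\times\mathrm{id}$ to $\mathrm{Ann}(\mathrm{graph}(\Phi))$, whereas the paper writes $\mathcal{R}(\Phi^*)=\mathrm{Ann}(\mathcal{R}(\mu_{-1}\circ\Phi))$; these are equivalent.
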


\begin{proof}
A map defines a Lie groupoid morphism if and only if its graph is a subgroupoid. Then the proof is analogous to the equivalence between (b) and (d) of Prop.~\ref{prop:doublelinear}, as the kernels, cokernels and duals of VB-groupoids are also well-defined (see \cite[Sec.~3.2]{BCdH}). 
For the second assertion,
note that $\mathcal{R}(\Phi^*)$ is a VB-groupoid over the graph of $\Phi|_{H_1}: H_1\to H_2$, and in a VB-groupoid the top groupoid is source connected if and only if so is the base groupoid \cite[Rem.~3.1.1]{BCdH}.
\end{proof}

A {\bf Poisson VB-groupoid} (or {\bf PVB-groupoid} for short) is a VB-groupoid $\Gamma$ with a Poisson structure $\pi \in \mathfrak{X}^2(\Gamma)$  which is both multiplicative and linear: 
$$
\begin{matrix}
(\Gamma,\pi) & \rightrightarrows & A \\ 
\downarrow &  & \downarrow\\ 
H & \rightrightarrows & M.
\end{matrix}
$$

The dual algebroid-like object obtained by VB-groupoid duality is an {\bf LA-groupoid} \cite{mac-double1}, defined by a diagram of Lie groupoid and Lie algebroid structures, 
\begin{equation}\label{eq:LAgrp}
\begin{matrix}
\Gamma & \rightrightarrows & A \\ 
\Downarrow &  & \Downarrow\\ 
H & \rightrightarrows & M,
\end{matrix}
\end{equation}
such that the groupoid structural maps of $\Gamma\toto A$ cover those of $H\toto M$ and are Lie-algebroid morphisms 
(for the multiplication map one needs to use the fact that the space of composable arrows $\Gamma\times_A\Gamma$ inherits a Lie algebroid structure over $H\times_M H$).
As  proven by Mackenzie in \cite[Thm.~3.14]{mk2}, VB-groupoid duality gives a bijective correspondence between these structures:
\begin{equation}\label{eq:PVBLAdual}
\begin{matrix}
(\Gamma,\pi) & \rightrightarrows & A \\ \downarrow & & \downarrow\\ B & \rightrightarrows & M
\end{matrix}\;\;\; \rightleftharpoons \;\;\;  
\begin{matrix}
\Gamma^* & \rightrightarrows & C^* \\ \Downarrow & & \Downarrow\\ B & \rightrightarrows & M.
\end{matrix}
\end{equation}


Examples of PVB-groupoids include cotangent bundles of Lie groupoids (with their canonical symplectic structures) and tangent bundles of Poisson groupoids (with the tangent lift of the Poisson structure). Dually, tangent bundles of Lie groupoids and cotangent bundles of Poisson groupoids are examples of LA-groupoids. A special class of LA-groupoids known as {\em Lie 2-algebras} will be discussed in Section~\ref{sec:lie2bi}.

An {\bf LA-groupoid map}  is a morphism of Lie groupoids $\Gamma_1\to \Gamma_2$ which is also a morphism of Lie algebroids. Combining Propositions~\ref{prop:doublelinear} and \ref{prop:VBgmap} we obtain

\begin{proposition}\label{prop:LAgmap}
A vector bundle map $\Phi: (\Gamma_1\to H_1) \to (\Gamma_2\to H_2)$ between LA-groupoids 
is an LA-groupoid morphism if and only if $\mathcal{R}(\Phi^*)\subset \Gamma_1^*\times \overline{\Gamma_2^*}$ is a coisotropic VB-subgroupoid.
\end{proposition}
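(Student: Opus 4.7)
The plan is to decompose the LA-groupoid condition on $\Phi$ into its two constituent pieces and apply the dual characterizations already established. By definition of LA-groupoid, $\Phi$ is an LA-groupoid morphism precisely when it is (i) a morphism of the horizontal Lie groupoids $\Gamma_i \toto A_i$, which combined with the hypothesis that $\Phi$ is a vector bundle map with respect to $\Gamma_i \to H_i$ amounts to $\Phi$ being a VB-groupoid morphism, and (ii) a morphism of the vertical Lie algebroids $\Gamma_i \then H_i$. Each of these two conditions admits a translation in terms of the dual relation $\mathcal{R}(\Phi^*) \subset \Gamma_1^* \times \overline{\Gamma_2^*}$.

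For (i), Proposition~\ref{prop:VBgmap} gives directly that $\Phi$ is a VB-groupoid morphism if and only if $\mathcal{R}(\Phi^*)$ is a VB-subgroupoid of $\Gamma_1^* \times \Gamma_2^*$. For (ii), I invoke the Lie-algebroid/coisotropic duality recalled at the beginning of Section~\ref{sec:prelim}: a vector bundle map between Lie algebroids is a Lie-algebroid morphism if and only if its dual relation is a coisotropic submanifold of the product of the dual Poisson vector bundles. Applied to the vertical Lie algebroid structures $\Gamma_i \then H_i$, this yields that $\Phi$ is a Lie-algebroid morphism if and only if $\mathcal{R}(\Phi^*)$ is coisotropic in $\Gamma_1^* \times \overline{\Gamma_2^*}$ with respect to the linear Poisson structure dual to the vertical Lie algebroid. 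Both characterizations concern the same submanifold of the same ambient space, because the two dualizations of $\Gamma_i$ fit together into precisely the PVB-groupoid structure on $\Gamma_i^*$ recorded in \eqref{eq:PVBLAdual}: the VB-groupoid structure coming from Lie-groupoid duality and the linear Poisson structure coming from Lie-algebroid duality are the compatible pair built into that correspondence.

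Combining the two equivalences, $\Phi$ is an LA-groupoid morphism if and only if $\mathcal{R}(\Phi^*)$ is simultaneously a VB-subgroupoid and a coisotropic submanifold — i.e., a coisotropic VB-subgroupoid — as claimed. I do not expect any genuine obstacle: the argument is a direct assembly of the two prior propositions. The only point to check with some care is that when the base Lie groupoids $H_1 \toto M_1$ and $H_2 \toto M_2$ are different, the coisotropy of $\mathcal{R}(\Phi^*)$ is compatible with its VB-subgroupoid structure over the graph of $\Phi|_{H_1}$; but this coherence is exactly the one already handled in Proposition~\ref{prop:VBgmap}, so no new issue arises.
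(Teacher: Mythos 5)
Your proof is correct and follows essentially the same route as the paper, which simply combines Proposition~\ref{prop:VBgmap} (for the VB-groupoid half) with the Lie-algebroid/coisotropic duality (encoded in the equivalence (a)$\Leftrightarrow$(c) of Proposition~\ref{prop:doublelinear}, which you invoke directly via the duality recalled in Section~\ref{sec:prelim}). Your observation that the two dual structures on $\Gamma_i^*$ assemble into the PVB-groupoid of \eqref{eq:PVBLAdual} correctly justifies that both characterizations live on the same ambient space, so no gap remains.
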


Mackenzie has shown that the differentiation of the groupoid structures on an LA-groupoid results in a double Lie algebroid \cite{mac-double2}:
\begin{equation}\label{eq:LAgrLie}
\begin{matrix}
\Gamma & \rightrightarrows & A \\ 
\Downarrow &  & \Downarrow\\ 
H & \rightrightarrows & M
\end{matrix}\qquad \stackrel{\mathrm{Lie}}{\mapsto} \qquad 
\begin{matrix}
\mathrm{Lie}(\Gamma) & \Longrightarrow & A \\ 
\Downarrow &  & \Downarrow\\ 
\mathrm{Lie}(H) & \Longrightarrow & M.
\end{matrix}
\end{equation}
This result can be also derived from the dual fact  \cite[Sec.~5.3]{BCdH} that  
the differentiation of a Poisson VB-groupoid gives a Poisson VB-algebroid. Using that these objects in the dual picture admit a simple description in terms of ``multiplication by scalars'', we prove  the corresponding integration result \cite[Thm.~5.3.5]{BCdH}: If the top algebroid $\Omega\then A$ of a double Lie algebroid $\Omega$ is integrable, its source-simply-connected integration $\Gamma\toto A$ is naturally an LA-groupoid.
For later use, we now extend this differentiation/integration theorem to include morphisms, which can be seen as a version of Lie second's theorem for LA-groupoids and double Lie algebroids.

\begin{theorem}\label{thm:Lie2LA}
(1) The differentiation of an LA-groupoid morphism $\Gamma_1\to \Gamma_2$ is a morphism of double Lie algebroids $\mathrm{Lie}(\Gamma_1) \to \mathrm{Lie}(\Gamma_2)$. (2) If $\Gamma_1$ is a source-simply connected LA-groupoid, then
a double-Lie-algebroid morphisms $\mathrm{Lie}(\Gamma_1) \to \mathrm{Lie}(\Gamma_2)$ integrates to a unique LA-groupoid morphisms  $\Gamma_1\to \Gamma_2$.
\end{theorem}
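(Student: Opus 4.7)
The plan is to reduce both parts to the coisotropic characterizations of morphisms via dual relations---Proposition \ref{prop:LAgmap} on the groupoid side and Proposition \ref{prop:DLAmap} on the algebroid side---and then transfer coisotropicity between the two levels using the Lie theory of Poisson groupoids encoded in Proposition \ref{prop:coisot}.

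For part (1), I would start with an LA-groupoid morphism $\Phi:\Gamma_1\to\Gamma_2$ and use Proposition \ref{prop:LAgmap} to see that $\mathcal{R}(\Phi^*)$ is a coisotropic VB-subgroupoid of the Poisson VB-groupoid $\Gamma_1^*\times\overline{\Gamma_2^*}$. Differentiating the Poisson groupoid structure and applying the easy direction of Proposition \ref{prop:coisot} yields a coisotropic VB-subalgebroid $\mathrm{Lie}(\mathcal{R}(\Phi^*))$ of $\mathrm{Lie}(\Gamma_1^*)\times\overline{\mathrm{Lie}(\Gamma_2^*)}$. Using the naturality identifications $\mathrm{Lie}(\Gamma_i^*)\cong\mathrm{Lie}(\Gamma_i)^*$ (as PVB-algebroids, dual to the DLAs $\mathrm{Lie}(\Gamma_i)$) together with $\mathrm{Lie}(\mathcal{R}(\Phi^*))=\mathcal{R}(\mathrm{Lie}(\Phi)^*)$, I can then invoke Proposition \ref{prop:DLAmap} to conclude that $\mathrm{Lie}(\Phi)$ is a morphism of double Lie algebroids.

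For part (2), given a DLA morphism $\phi:\mathrm{Lie}(\Gamma_1)\to\mathrm{Lie}(\Gamma_2)$, I would first integrate its top Lie algebroid structure by invoking Lie's second theorem applied to the source-simply-connected Lie groupoid $\Gamma_1\rightrightarrows A$, producing a unique Lie groupoid morphism $\Phi:\Gamma_1\to\Gamma_2$. Applying uniqueness in Lie's second theorem to the scalar multiplication maps on both $\Gamma_i$ shows $\Phi$ intertwines them, so $\Phi$ is automatically a VB-groupoid morphism. It remains to verify that $\Phi$ is also an LA-groupoid morphism, which by Proposition \ref{prop:LAgmap} amounts to showing that $\mathcal{R}(\Phi^*)$ is a coisotropic VB-subgroupoid. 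Proposition \ref{prop:VBgmap} already gives that it is a VB-subgroupoid and, crucially, that it is source-connected since $\Gamma_1$ is. Its Lie algebroid is $\mathcal{R}(\mathrm{Lie}(\Phi)^*)=\mathcal{R}(\phi^*)$, which is coisotropic by Proposition \ref{prop:DLAmap} applied to $\phi$. The converse direction of Proposition \ref{prop:coisot}, which requires source-connectedness, then yields coisotropicity of $\mathcal{R}(\Phi^*)$, completing the proof; uniqueness of $\Phi$ is inherited from uniqueness in Lie's second theorem.

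The main technical obstacle is verifying the naturality identifications relating differentiation to duality, in particular $\mathrm{Lie}(\mathcal{R}(\Phi^*))=\mathcal{R}(\mathrm{Lie}(\Phi)^*)$ and the compatibility $\mathrm{Lie}(\Gamma_i^*)\cong\mathrm{Lie}(\Gamma_i)^*$ as PVB-algebroids dual to the DLA $\mathrm{Lie}(\Gamma_i)$. Both of these fit into the differentiation/integration framework for Poisson VB-groupoids developed in \cite{BCdH}; once these identifications are in hand, the remainder of the argument is a direct combination of the earlier propositions.
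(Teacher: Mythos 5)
Your proposal is correct and follows essentially the same route as the paper's proof: both parts reduce to the coisotropic characterizations of morphisms via dual relations (Propositions \ref{prop:LAgmap} and \ref{prop:DLAmap}), transfer coisotropicity between the groupoid and algebroid levels via Proposition \ref{prop:coisot} (using the source-connectedness of $\mathcal{R}(\Phi^*)$ from Proposition \ref{prop:VBgmap} in part (2)), and obtain uniqueness from Lie's second theorem. Your in-line argument that the integrated $\Phi$ intertwines the scalar multiplications is just an unpacking of the reference to \cite[Prop.~4.3.6]{BCdH} that the paper uses for the same step, so the two proofs coincide in substance.
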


\begin{proof}
Consider the LA-groupoids $(\Gamma_i\toto A_i)$ over $(H_i\toto M_i)$, for $i=1,2$, as in \eqref{eq:LAgrp}, and denote the corresponding double Lie algebroids obtained by differentiation by
$(\Omega_i\then A_i)$ over $(B_i\then M_i)$, $i=1,2$.

Consider a morphism of LA-groupoids $\Phi:\Gamma_1\to \Gamma_2$.  Since it is a morphism of VB-groupoids,  $\phi:=\mathrm{Lie}(\Phi): \Omega_1\to \Omega_2$ is a morphism of VB-algebroids
\cite[Prop.~4.3.6]{BCdH}, in particular double linear.
By Prop.~\ref{prop:LAgmap}, $\Phi$ is a morphism of LA-groupoids if and only if $\mathcal{R}(\Phi^*)\subset \Gamma_1^*\times \overline{\Gamma_2^*}$ is a coisotropic subgroupoid. 
By Prop.~\ref{prop:coisot}, $\mathrm{Lie}(\mathcal{R}(\Phi^*)) = \mathcal{R}(\phi^*)$ is a coisotropic subalgebroid of $\Omega_1^*\times \overline{\Omega}_2^*$ (see \cite[Prop.~5.3.4]{BCdH} for the commutation of the Lie and dualization functors), which implies that $\phi$ is a morphism of double Lie algebroids by Prop.~\ref{prop:DLAmap}. This proves (1).

We now prove part (2).
If $\Gamma_1$ is source simply connected and $\phi: \Omega_1\to \Omega_2$ is any morphism of double Lie algebroids, by the usual 
Lie second's theorem for Lie algebroids there is a unique Lie-groupoid morphism $\Phi: (\Gamma_1\toto A_1) \to (\Gamma_2\toto A_2)$
integrating $\phi: (\Omega_1\then A_1) \to (\Omega_2\then A_2)$. The fact that $\Phi$ is actually a morphism of LA-groupoids is verified by reversing the arguments in part (1): $\Phi$ is a vector bundle map $(\Gamma_1\to H_1) \to (\Gamma_2\to H_2)$ by 
\cite[Prop.~4.3.6]{BCdH}, and since $\mathrm{Lie}(\mathcal{R}(\Phi^*))\subset \Omega_1^*\times \overline{\Omega}_2^*$ is a coisotropic subalgebroid, 
$\mathcal{R}(\Phi^*)\subset \Gamma_1^*\times \Gamma_2^*$, which is source connected by Prop.~\ref{prop:VBgmap}, is a coisotropic subgroupoid by Prop.~\ref{prop:coisot}.
\end{proof}

\begin{remark}\label{rem:lie2more}
The integration direction in part (2) of the theorem is based on the following key fact, shown in the proof: if $\Gamma_1,\Gamma_2$ are LA-groupoids, $\Gamma_1$ source connected, and $\Phi: (\Gamma_1\toto A_1) \to (\Gamma_2\toto A_2)$ is a Lie groupoid morphism whose differentiation is a morphism of double Lie algebroids, then $\Phi$ is an LA-groupoid morphism. \hfill $\diamond$
\end{remark}

\section{Poisson double algebroids}\label{sec:infinitDPG}

Taking a step further from PVB-algebroids and double Lie algebroids,
we now introduce our main object of interest: Poisson double algebroids.


\subsection{Definition and examples}

\begin{definition}\label{def:pda}
A {\bf Poisson double algebroid} is a double Lie algebroid $\Omega$ 
equipped with a Poisson structure $\pi \in \mathfrak{X}^2(\Omega)$,
\begin{equation}\label{eq:poisDLA}
\begin{matrix}
(\Omega,\pi) & \Longrightarrow & A \\ \Downarrow & & \Downarrow\\ B & \Longrightarrow & M,
\end{matrix}
\end{equation}
making $\Omega\then A$ and $\Omega\then B$ into Poisson (VB-)algebroids.
\end{definition}

Note that both duals $\Omega^*$ and $\Omega^\bullet$ inherit Poisson double  algebroid structures, so all structures induced by duality  are of the same type. {\bf Morphisms} of Poisson double algebroids are morphisms of the underlying double Lie algebroids that preserve the Poisson structures.

\begin{example}
A Poisson algebroid $(A,\pi)$ can be regarded as a Poisson double algebroid by considering the zero-rank (say, horizontal) Lie algebroid $\mathrm{id}: A\to A$.
We can also regard PVB-algebroids, or Poisson double vector bundles, as examples of Poisson double Lie algebroids (with one, or both, Lie algebroid structures on $\Omega$ being trivial). Double Lie algebroids are particular cases as well, where the Poisson structure is zero.
\end{example}

\begin{example}
For a Poisson algebroid $(A,\pi)$, its tangent double Lie algebroid \eqref{eq:dlaTA} is a double Poisson algebroid with respect to the tangent lift of $\pi$,
\begin{equation}\label{eq:PDATA}
\begin{matrix}
(TA,\pi^{tan}) & \Longrightarrow & A \\ \Downarrow & & \Downarrow\\ TM & \Longrightarrow & M.
\end{matrix}
\end{equation}
Since the tangent bundle of a Poisson manifold $M$ is a Poisson algebroid (see Example~\ref{ex:symp1}), the double tangent bundle $TTM$ is a Poisson double algebroid. Since $T^*M$ is also a Poisson algebroid, $TT^*M$ is a Poisson double algebroid. 
\end{example}

\begin{example}\label{ex:bialgebroid}
A Poisson structure $\pi$ on a Lie algebroid $A\then M$ defines a Poisson algebroid (equivalently, a Lie bialgebroid $(A,A^*)$) if and only if $T^*A \then A^*$ fits into a double Lie algebroid \cite[Thm.~5.1]{mac-crelle}:
\begin{equation}\label{eq:symplPDLA}
\begin{matrix}
T^*A & \Longrightarrow & A \\ \Downarrow & & \Downarrow\\ A^* & \Longrightarrow & M,
\end{matrix}
\end{equation}
where the vertical Lie-algebroid structure comes from the identification $T^*A\cong T^*A^*$.
This {\em cotangent double of a Lie bialgebroid}, as called by Mackenzie, is compatible with the canonical symplectic structure on $T^*A$, hence it is a Poisson (in fact, symplectic) double algebroid.
Its horizontal dual is $TA$ as in the previous example, and its vertical dual is $TA^*$, with the tangent lift of the Poisson structure on $A^*$.

\end{example}

\begin{example}
 Lie 2-bialgebras as in \cite{BSZ,CSX} can be seen as particular Poisson double algebroids, see $\S$~\ref{subsec:lie2bi}.
\end{example}

We show in the next section that Poisson double algebroids are the infinitesimal counterparts of 
Poisson double groupoids \cite[Def.~2.2]{mk2}.

\subsection{Basic properties}

The definition of Poisson double algebroid can be directly formulated in terms of the map $\pi^\sharp: T^*\Omega\to T\Omega$. For a double Lie algebroid $\Omega$ as in \eqref{eq:DLAdiag},
its cotangent and tangent bundles carry double-Lie algebroid structures
\begin{equation}\label{eq:DLATT*}
\begin{matrix}
T^*\Omega & \Longrightarrow & \Omega^\bullet \\ 
\Downarrow &  & \Downarrow\\ 
\Omega^* & \Longrightarrow & C^*,
\end{matrix}
\qquad\quad
\begin{matrix}
T\Omega & \Longrightarrow & TA \\ 
\Downarrow &  & \Downarrow\\ 
TB  & \Longrightarrow & TM,
\end{matrix}
\end{equation}
and a Poisson structure $\pi$ makes it into a Poisson double algebroid if and only if $\pi^\sharp: T^*\Omega\to T\Omega$ is a morphism of  these double Lie algebroids. 
Moreover, 
the cotangent cube of $(\Omega,\pi)$ (see \eqref{eq:cubes}) is naturally a ``triple Lie algebroid'' (as is the tangent cube of any double Lie algebroid), and $\pi^\sharp$ is a morphism of such structures:
\begin{equation}\label{eq:poissonTLAmap}
\begin{matrix}
\xymatrix@R=6pt@C=6pt{
T^*\Omega \ar@{=>}[rr] \ar@{=>}[dd] \ar@{=>}[rd]&  & {\Omega^\bullet} \ar@{=>}'[d][dd] \ar@{=>}[dr]& \\
 & \Omega \ar@{=>}[dd] \ar@{=>}[rr]&  & A\ar@{=>}[dd]\\
\Omega^* \ar@{=>}'[r][rr] \ar@{=>}[dr]&  & C^* \ar@{=>}[dr]& \\
& B \ar@{=>}[rr]& & M.}
\end{matrix}  
\stackrel{\pi^\sharp}{\longrightarrow}
\begin{matrix}
\xymatrix@R=6pt@C=6pt{
T\Omega \ar@{=>}[rr] \ar@{=>}[dd] \ar@{=>}[rd]&  & {TA} \ar@{=>}'[d][dd] \ar@{=>}[dr]& \\
 & \Omega \ar@{=>}[dd] \ar@{=>}[rr]&  & A\ar@{=>}[dd]\\
TB \ar@{=>}'[r][rr] \ar@{=>}[dr]&  & TM \ar@{=>}[dr]& \\
& B \ar@{=>}[rr]& & M.}
\end{matrix} 
\end{equation}

Let $(\Omega,\pi)$ be a Poisson double algebroid as in (\ref{eq:poisDLA}). 
Then $A,B$ and $C^*$ are Lie algebroids over $M$, and since $(\Omega,\Omega^*)$, $(\Omega,\Omega^\bullet)$ and $(\Omega^*,\Omega^\bullet)$ are Lie bialgebroids, we have induced Poisson structures $\pi_A,\pi_B$ and $\pi_{C^*}$ on $A,B$ and $C^*$ (see \eqref{eq:basepoisson}).

\begin{proposition}\label{prop:properties}
The sides $(A,\pi_A)$ and $(B,\pi_B)$, and the core dual $(C^*,\pi_{C^*})$, are Poisson algebroids, and hence $(A,A^*)$, $(B,B^*)$ and $(C,C^*)$ are Lie bialgebroids. 
\end{proposition}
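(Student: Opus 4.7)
The three assertions are linked by the symmetries of the Poisson double algebroid: the case of $B$ reduces to that of $A$ by swapping horizontal and vertical Lie algebroid structures, while the core-dual case for $C^*$ reduces to a side case by exploiting that the dual $\Omega^*$ (equivalently $\Omega^\bullet$) inherits a Poisson double algebroid structure in which $C^*$ appears as a side. The plan is therefore to focus on showing that $(A,\pi_A)$ is a Poisson algebroid.

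By the Mackenzie--Xu correspondence ($\S$\ref{subsec:poisalg}), this amounts to exhibiting a Lie bialgebroid $(A,A^*)$. The Lie algebroid structure on $A\Rightarrow M$ is the side of $\Omega$, while the candidate Lie algebroid structure on $A^*\Rightarrow M$ is the one that $A^*$ inherits as the core of the double Lie algebroid $\Omega^*$; note that $\Omega^*$ is indeed a double Lie algebroid, because $(\Omega,\pi)$ being a Poisson double algebroid means that Mackenzie--Xu applied to $\Omega\Rightarrow B$ produces the missing Lie algebroid structure $\Omega^*\Rightarrow B$ complementing the existing $\Omega^*\Rightarrow C^*$ coming from the double Lie algebroid structure on $\Omega$. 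The bialgebroid compatibility will then be verified through the characterization that $\pi_A^\sharp : T^*A\to TA$ is a morphism of Lie algebroids from $T^*A\Rightarrow A^*$ to $TA\Rightarrow TM$.

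The plan to check this is to use the factorization
\[
\pi_A^\sharp \;=\; \rho_{\Omega^\bullet} \circ \rho_\Omega^* : T^*A \longrightarrow \Omega^\bullet \longrightarrow TA
\]
coming from \eqref{eq:basepoisson} applied to the Lie bialgebroid $(\Omega,\Omega^\bullet)$ over $A$, and to observe that each factor is a Lie-algebroid morphism with respect to the appropriate structures. Indeed, the horizontal anchor $\rho_\Omega : \Omega \to TA$ is a morphism of double Lie algebroids by \eqref{eq:anchordla}, so by Proposition~\ref{prop:doublelinear} its dual $\rho_\Omega^* : T^*A \to \Omega^\bullet$ is a VB-algebroid morphism, and in particular a Lie-algebroid morphism from $T^*A\Rightarrow A^*$ to $\Omega^\bullet\Rightarrow C^*$. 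Similarly, $\rho_{\Omega^\bullet} : \Omega^\bullet \to TA$, being the anchor of the double Lie algebroid $\Omega^\bullet$, restricts to a Lie-algebroid morphism from $\Omega^\bullet\Rightarrow C^*$ to $TA\Rightarrow TM$. Composing yields the desired Mackenzie--Xu characterization of $(A,\pi_A)$ as a Poisson algebroid.

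The main technical obstacle will be the bookkeeping needed to match up the several Lie algebroid structures involved---in particular, to confirm that the structure on $A^*$ coming from the core of $\Omega^*$ is indeed the one that pairs with $A$ in a Lie bialgebroid via the above factorization. This is essentially a systematic application of the VB-algebroid duality framework of Section~\ref{sec:DS}, and can be organized cleanly by viewing $\pi^\sharp : T^*\Omega \to T\Omega$ as the triple Lie-algebroid morphism displayed in \eqref{eq:poissonTLAmap} and extracting the appropriate sub-face.
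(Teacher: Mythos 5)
Your argument is correct and is essentially the paper's own proof in mirror image: the paper verifies the claim for the side $B$ by factoring $\pi_B^\sharp = \rho_{*,ver}\circ \rho_{ver}^*$ through $\Omega^*\Rightarrow C^*$ and using that both anchors are morphisms of double Lie algebroids (the second because $\Omega^*$ inherits a double-Lie-algebroid structure from the Poisson double algebroid), exactly as you do for the side $A$ via $\Omega^\bullet$. The remaining cases follow, as you indicate, by the horizontal/vertical symmetry and by duality.
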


\begin{proof}
We will verify the claim for the side $B$. Since the anchor $\rho_{ver}: \Omega\to TB$ is a map of double Lie algebroids, see \eqref{eq:anchordla}, its vertical dual is a morphism of horizontal VB-algebroids:
$$
\begin{matrix}
T^*B & \Longrightarrow & B^* \\ \downarrow & & \downarrow\\  B &  \Longrightarrow & M,
\end{matrix}\qquad \stackrel{\rho^*_{ver}}{\longrightarrow} \qquad 
\begin{matrix}
\Omega^* & \Longrightarrow & C^* \\ \downarrow & & \downarrow\\ B & \Longrightarrow & M.
\end{matrix}
$$
Similarly the vertical anchor $\rho_{*,ver}$ of $\Omega^*$ is also a map of double Lie algebroids, and hence in particular a morphism of horizontal VB-algebroids:
$$
\begin{matrix}
\Omega^* & \Longrightarrow & C^* \\ \downarrow & & \downarrow\\  B &  \Longrightarrow & M,
\end{matrix}\qquad \stackrel{\rho_{*,ver}}{\longrightarrow} \qquad 
\begin{matrix}
TB & \Longrightarrow & TM \\ \downarrow & & \downarrow\\ B & \Longrightarrow & M.
\end{matrix}
$$
Hence $\pi_B^\sharp = \rho_{*,ver} \circ \rho^*_{ver}$ is a morphism from $T^*B\then B^*$ to $TB\then TM$.
\end{proof}

Motivated by the special role of symplectic double groupoids in the theory of Poisson double groupoids, we now discuss Poisson double algebroids whose Poisson structures are {\em symplectic}. 
It turns out that any symplectic double algebroid must be like \eqref{eq:symplPDLA}, in analogy with the characterization of symplectic algebroids in Prop.~\ref{prop:symplalgbs}. 

\begin{proposition}\label{prop:PDAsymp}
A Poisson double algebroid $\Omega$ for which the Poisson structure $\pi$ is symplectic is isomorphic to the cotangent of its side bialgebroid $(A,A^*)$:
$$
\begin{matrix}
(T^*A,\omega_{can}) & \Longrightarrow & A \\ \Downarrow & & \Downarrow\\ A^* & \Longrightarrow & M.
\end{matrix}
$$
\end{proposition}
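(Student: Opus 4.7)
The plan is to reduce to the single-algebroid statement (Proposition~\ref{prop:symplalgbs}) applied in the horizontal direction, and then upgrade the resulting isomorphism to respect the full double structure, exploiting that $\pi$ is compatible with both algebroid directions simultaneously.

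First, view $(\Omega\Rightarrow A,\pi)$ as a symplectic Poisson algebroid horizontally. The map $\pi^\sharp: T^*\Omega\to T\Omega$ is a Lie algebroid morphism from $T^*\Omega\Rightarrow \Omega^\bullet$ to $T\Omega\Rightarrow TA$, with base map given by the anchor $\rho_{\Omega^\bullet}: \Omega^\bullet\to TA$ of the dual Lie algebroid. Since $\pi$ is symplectic, $\pi^\sharp$ is a vector bundle isomorphism, forcing $\rho_{\Omega^\bullet}$ to be a vector bundle isomorphism as well, hence a Lie algebroid isomorphism over $A$. Proposition~\ref{prop:symplalgbs} then yields an isomorphism $\Omega\cong T^*A$ of Poisson algebroids over $A$, where the Lie algebroid on $A$ is (by Proposition~\ref{prop:properties}) the side algebroid of the double, paired with $A^*$ into the induced Lie bialgebroid $(A,A^*)$.

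Second, I would promote this to an isomorphism of double Lie algebroids. The map $\rho_{\Omega^\bullet}: \Omega^\bullet\to TA$ is already a morphism of double Lie algebroids by \eqref{eq:anchordla} applied to the double Lie algebroid $\Omega^\bullet$; being a vector bundle isomorphism, it is an isomorphism of double Lie algebroids. Dualizing horizontally (functorial in double-Lie-algebroid isomorphisms, by the horizontal analog of Proposition~\ref{prop:DLAmap}) yields the desired isomorphism $\Omega\cong T^*A$ of double Lie algebroids, sending $\pi$ to $\omega_{can}$ and identifying the vertical side $B$ with $A^*$. The vertical Lie algebroid structure on $T^*A\Rightarrow A^*$ is the one arising from the Lie bialgebroid $(A,A^*)$ as described in Example~\ref{ex:bialgebroid}.

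The main subtlety I anticipate is verifying that $\rho_{\Omega^\bullet}$ being a double-linear Lie algebroid map plus a bundle isomorphism suffices to conclude it is an isomorphism of double Lie algebroids in both horizontal and vertical directions. This should follow because $\pi^\sharp$ is in fact an isomorphism of the full triple-structured cubes in \eqref{eq:poissonTLAmap}, so compatibility across all faces (including $C^*\cong TM$ and the vertical algebroid structures) is automatic when $\pi$ is symplectic. One should also check that the Lie algebroid on $B\cong A^*$ inherited from the side structure of $\Omega$ coincides with the one in the bialgebroid $(A,A^*)$ of Proposition~\ref{prop:properties}, which is essentially the content of that proposition itself.
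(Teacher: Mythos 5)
Your argument is correct and essentially the paper's own proof: both hinge on the observation that the anchor $\rho_{\Omega^\bullet}\colon \Omega^\bullet \to TA$ (which is the restriction of $\pi^\sharp$ to the right faces of the cubes in \eqref{eq:poissonTLAmap}) is a morphism of Poisson double algebroids that becomes an isomorphism when $\pi$ is symplectic, after which the statement follows by horizontal dualization. Your detour through Proposition~\ref{prop:symplalgbs} to match $\pi$ with $\omega_{can}$ is just a repackaging of the paper's appeal to the remark following that proposition (the anchor of a Lie bialgebroid is a Poisson morphism), so the two proofs coincide in substance.
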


Of course, by symmetry, the side $A$ is interchangeable with $B$.

\begin{proof} 
Consider the map $\pi^\sharp: T^*\Omega\to T\Omega$ as in \eqref{eq:poissonTLAmap}. Its restriction to the right faces of the cotangent and tangent cubes is a map of
 double Lie algebroids,
\begin{equation}\label{eq:sympPDAanchor}
\begin{matrix}
\Omega^\bullet & \Longrightarrow & A \\ \Downarrow & & \Downarrow\\ C^* & \Longrightarrow & M
\end{matrix}\;\; \longrightarrow \;\;
\begin{matrix}
TA & \Longrightarrow & A \\ \Downarrow & & \Downarrow\\ TM & \Longrightarrow & M.
\end{matrix}
\end{equation}
This map also preserves the Poisson structures on $\Omega^\bullet$ (dual to $\Omega\then A$) and $TA$ (tangent lift of $\pi_A$), i.e., it is a morphism of double Poisson algebroids; this follows since \eqref{eq:sympPDAanchor} is nothing but the anchor  of $\Omega^\bullet\then A$ and  $(\Omega\then A, \Omega^\bullet\then A)$ is a Lie bialgebroid, see the comments after Prop.~\ref{prop:symplalgbs}.
When $\pi$ is symplectic, \eqref{eq:sympPDAanchor} is an isomorphism. The theorem now follows by dualizing this map with respect to horizontal duals. 
\end{proof}

We thus obtain a natural bijective correspondence between Poisson algebroids (or Lie bialgebroids) and symplectic double Lie algebroids.

\subsection{Double Lie bialgebroids and the Weil-algebra viewpoint}\label{subsec:weil}

Just as Poisson algebroids are equivalent to Lie bialgebroids, one can alternatively think of Poisson double algebroids in terms of  pairs of double Lie algebroids in duality. 


\begin{definition}\label{def:dlbalg}
Let $D$ be a double vector bundle. We say that $(D,D^*)$ is a  {\bf double Lie bialgebroid} if $D$ and $D^*$ are equipped with double-Lie-algebroid structures,
$$
\begin{matrix}
D & \Longrightarrow & A \\ \Downarrow & & \Downarrow\\ B & \Longrightarrow & M,
\end{matrix}
\qquad \begin{matrix}
D^* & \Longrightarrow & C^* \\ \Downarrow & & \Downarrow\\ B & \Longrightarrow & M,
\end{matrix}
$$
such that the horizontal VB-algebroids are in duality, while the vertical Lie algebroids are compatible in the sense that $(D\then B,D^*\then B)$ is a Lie bialgebroid. 
\end{definition}

It is a direct consequence of the duality between double Lie algebroids and Poisson VB-algebroids \eqref{dlapvbdual} that a Poisson-double-algebroid structure on $D$ is equivalent to a double Lie bialgebroid $(D,D^*)$.
We next consider these objects from an algebraic perspective that extends the description of  Lie bialgebroids in terms of differentials and Gerstenhaber brackets, recalled in $\S$~\ref{subsec:poisalg}. For that we will need to consider the   {\em Weil algebra} \cite{pike,voronov} of a double vector bundle, as a generalization of the exterior algebras of vector bundles.

The {\bf Weil algebra} of a double vector bundle $D$ \eqref{eq:DVB} is a super-commutative version of the algebra of double polynomial functions (i.e., polynomial with respect to both horizontal and vertical vector bundles). 
If $D=A\oplus B\oplus C$ is a split double vector bundle then its Weil algebra is $\mathcal{W}(D):=\Gamma(\wedge A^*\otimes\wedge B^*\otimes\vee C^*)$, where $\wedge$ and $\vee$ denote the exterior and symmetric algebra functors. Since any double vector bundle $D$ admits a splitting, its Weil algebra is (non canonically) isomorphic to the previous one. An intrinsic description that does not depend on any splitting is
$$
\mathcal{W}(D)=\Gamma(\wedge A^*\otimes\wedge B^*\otimes\vee \widehat{C^*})/\sim
$$
where $\widehat{C^*}$ is the vector bundle whose space of sections is $C^\infty_{[1,1]}(D)$, the space of double linear functions on $D$, and 
$ \alpha\otimes\beta\otimes 1 \sim 1\otimes 1\otimes \alpha\beta$.
The Weil algebra $\mathcal{W}(D)$ is a bi-graded super-commutative algebra, where generating sections $\alpha$, $\beta$ and $\gamma$ of $A^*$, $B^*$ and $\widehat{C^*}$ have bi-degrees $(1,0)$, $(0,1)$ and $(1,1)$, respectively.

As shown in \cite[Theorem~6.1]{pike}, a vertical VB-algebroid structure on a double vector bundle $D$ is equivalently described by any of the following structures:
 \begin{itemize}
     \item a differential  of bidegree $(1,0)$ on  $\mathcal{W}(D)$;
     \item a differential of bidegree $(1,0)$ on $\mathcal{W}(D^\bullet)$;
     \item a Gerstenhaber bracket of bidegree $(-1,-1)$ on  $\mathcal{W}(D^*)$.
 \end{itemize}
There is an analogous characterization for horizontal VB-algebroids. 
In turn, by \cite[Theorem~8.2]{pike}, 
a double-Lie-algebroid structure on $D$ is equivalent to any of the following structures:
\begin{itemize}
    \item two commuting differentials on $\mathcal{W}(D)$ of bidegrees $(1,0)$ and $(0,1)$;
    \item a differential of bidegree $(0,1)$ and a Gerstenhaber bracket of bidegree $(-1,-1)$ on $\mathcal{W}(D^*)$ such that the differential is a derivation of the bracket;
    \item  a  differential of bidegree $(1,0)$ and a Gerstenhaber bracket of bidegree $(-1,-1)$ on $\mathcal{W}(D^\bullet)$ such that the differential is a derivation of the bracket.
\end{itemize}

We can use these results to reformulate  the compatibility in the definition of double Lie bialgebroids in an algebraic fashion. A pair $D$ and $D^*$ of double Lie algebroids defines a double Lie bialgebroid $(D,D^*)$ if and only if (i) their horizontal VB-algebroids are in duality, and (ii) the differential $\delta$ on $\mathcal{W}(D)$ induced by $D$ is a derivation of the bracket $[\cdot,\cdot]$ induced by $D^*$ (here $\delta$ and $[\cdot,\cdot]$ correspond to vertical structures). 
We then have the following:

\begin{proposition}\label{prop:pda-dlba}
The following structures are equivalent:
\begin{itemize}
    \item a Poisson double algebroid $(D,\pi)$,
    \item a double Lie bialgebroid $(D,D^*)$,
    \item  two commuting differentials $\delta_h$ and $\delta_v$  on $\mathcal{W}(D)$, of respective bidegrees $(1,0)$ and $(0,1)$, together with a Gerstenhaber bracket $[\cdot,\cdot]$ of bidegree $(-1,-1)$, so that each differential is a derivation of $[\cdot,\cdot]$.
   \end{itemize}
\end{proposition}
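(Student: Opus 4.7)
The plan is to prove the equivalences (1)$\Leftrightarrow$(2) and (2)$\Leftrightarrow$(3) separately, both relying on the duality machinery of Section~\ref{sec:DS} and on the Weil-algebra characterizations of Pike~\cite{pike} recalled above.

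For (1)$\Leftrightarrow$(2), the approach is to unpack the definitions and apply the DLA--PVB-algebroid duality \eqref{dlapvbdual} once for each of the two Lie-algebroid directions on $D$. A Poisson double algebroid structure requires $\pi$ to be doubly linear and to make both $D\Rightarrow A$ and $D\Rightarrow B$ into Poisson algebroids. The DLA structure on $D$ combined with double-linearity of $\pi$ yields, via \eqref{dlapvbdual}, a PVB-algebroid structure on $D^*$ whose horizontal VB-algebroid $D^*\Rightarrow C^*$ is in duality with $D\Rightarrow A$. The remaining condition that $(D\Rightarrow B,\pi)$ is a Poisson algebroid is, by the Mackenzie--Xu correspondence of Section~\ref{subsec:poisalg}, equivalent to a Lie-algebroid structure $D^*\Rightarrow B$ making $(D\Rightarrow B, D^*\Rightarrow B)$ a Lie bialgebroid. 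Double-linearity of $\pi$ ensures that the two Lie-algebroid structures on $D^*$ assemble into a DLA, and reversing these steps gives the converse.

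For (2)$\Leftrightarrow$(3) the argument is purely algebraic via Pike's theorems. The DLA structure on $D$ is equivalent to two commuting differentials $\delta_h,\delta_v$ on $\mathcal{W}(D)$ of bidegrees $(0,1)$ and $(1,0)$. The DLA structure on $D^*$, applied via the second equivalent description of DLAs in Pike's theorem (using the canonical identification $\mathcal{W}((D^*)^*)\cong\mathcal{W}(D)$), amounts to a differential of bidegree $(0,1)$ together with a Gerstenhaber bracket of bidegree $(-1,-1)$ on $\mathcal{W}(D)$ such that the differential is a derivation of the bracket. The horizontal duality condition in the definition of double Lie bialgebroid identifies this differential with $\delta_h$, since both encode the common horizontal VB-algebroid structure via the triad of equivalent descriptions in Pike's VB-algebroid theorem. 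Hence the combined information is exactly the data of $\delta_h,\delta_v$ commuting on $\mathcal{W}(D)$ together with a Gerstenhaber bracket $[\cdot,\cdot]$ of bidegree $(-1,-1)$ for which $\delta_h$ is a derivation. Finally, the Lie bialgebroid compatibility for the vertical pair $(D\Rightarrow B, D^*\Rightarrow B)$ translates, by the derivation characterization \eqref{eq:bialgebra}, into the condition that $\delta_v$ is also a derivation of $[\cdot,\cdot]$, yielding (3).

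The main obstacle I anticipate is the bookkeeping of bidegrees and of the canonical identifications among $\mathcal{W}(D)$, $\mathcal{W}(D^*)$ and $\mathcal{W}(D^\bullet)$ under the horizontal, vertical and flip dualities. In particular, one must verify that the differential on $\mathcal{W}(D)$ arising from the horizontal VB-algebroid on $D^*$ really coincides with $\delta_h$ rather than introducing an additional constraint, so that the horizontal duality condition becomes automatic in the Weil-algebra picture and the derivation property splits exactly into the two expected compatibilities on $\delta_h$ and $\delta_v$.
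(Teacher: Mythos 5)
Your overall route is the one the paper intends (the equivalence with double Lie bialgebroids via the duality \eqref{dlapvbdual} together with the Mackenzie--Xu correspondence, and the Weil-algebra statement via the theorems of \cite{pike} recalled in $\S$~\ref{subsec:weil}), but there is a genuine gap in your argument for the first equivalence: the sentence ``double-linearity of $\pi$ ensures that the two Lie-algebroid structures on $D^*$ assemble into a DLA'' is not correct, and the hypothesis that $(D\Rightarrow A,\pi)$ is a Poisson algebroid --- half of the definition of Poisson double algebroid --- is never used in your proof. Double-linearity of $\pi$ only gives that the Lie algebroid $D^*\Rightarrow B$ (dual to the Poisson structure, viewed as linear over $B$) is a VB-algebroid with respect to $D^*\to C^*$, i.e.\ that $D^*$ carries both a horizontal and a vertical VB-algebroid structure. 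A double Lie algebroid is strictly more than that: by definition it carries the additional compatibility that its vertical dual be a PVB-algebroid, and this extra condition is exactly the subtlety the paper emphasizes. For $D^*$, whose vertical dual is canonically $D$, that compatibility amounts precisely to the statement that $(D,\pi)$, with the horizontal algebroid $D\Rightarrow A$ and the vertical linear structure $D\to B$, is a Poisson VB-algebroid --- i.e.\ to the unused hypothesis that $(D\Rightarrow A,\pi)$ is a Poisson algebroid. The fix is to apply \eqref{dlapvbdual} a second time, now to $D^*$: the PDA condition on the $A$-side corresponds to the DLA compatibility of $D^*$, while (as you correctly say) the PDA condition on the $B$-side corresponds via Mackenzie--Xu to the Lie-bialgebroid condition for $(D\Rightarrow B, D^*\Rightarrow B)$, and the horizontal duality holds by construction. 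Without this second application your argument does not establish that $D^*$ is a double Lie algebroid, and conversely it would not recover the Poisson-algebroid condition over $A$; in general that condition does not follow from the others --- already for Lie 2-(bi)algebras, where $B=M=\ast$, the two compatibilities are independent (cf.\ Proposition~\ref{prop:matched}).

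Your treatment of the second equivalence follows the paper's intended use of \cite[Thms.~6.1 and 8.2]{pike} and is fine in substance, apart from a relabelling: in the proposition (and in the paper's conventions, see $\S$~\ref{subsec:Lie2alg}) $\delta_h$ has bidegree $(1,0)$ and encodes the \emph{vertical} VB-algebroid of $D$, while $\delta_v$ has bidegree $(0,1)$ and encodes the horizontal one; with that convention the bidegree-$(0,1)$ differential arising from the DLA structure on $D^*$ is identified with $\delta_v$ (not $\delta_h$) by the horizontal-duality condition, and the vertical Lie-bialgebroid compatibility is that $\delta_h$ is a derivation of $[\cdot,\cdot]$. This is only bookkeeping, which you flagged yourself.
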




Note that,  by viewing a vector bundle $A\to M$ as a double vector bundle (with $B=C=0$), it is clear from Def.~\ref{def:dlbalg} that a double-Lie-bialgebroid structure on $A$ is just that of a usual Lie bialgebroid. From the Weil-algebra perspective, $\mathcal{W}(A)=\Gamma(\wedge A^*)$. A double-Lie-bialgebroid structure on $A$ must have a trivial differential, so it boils down to a degree 1 differential and a Gerstenhaber bracket on  $\Gamma(\wedge A^*)$ which are compatible.




\section{Lie theory of Poisson double structures}\label{sec:diffint}

In this section, we introduce Poisson LA-groupoids, review Poisson double groupoids \cite{mk2}, and develop the Lie theory relating them to Poisson double algebroids.

\subsection{Poisson LA-groupoids}\label{subsec:PLA}

\begin{definition}\label{def:PLA}
A {\bf Poisson LA-groupoid} is given by an LA-groupoid $\Gamma$ together with a Poisson structure $\pi\in \mathfrak{X}^2(\Gamma)$,
\begin{equation}\label{eq:poissonLAgrp}
\begin{matrix}
(\Gamma,\pi) & \rightrightarrows & A \\ 
\Downarrow &  & \Downarrow\\ 
H & \rightrightarrows & M,
\end{matrix}
\end{equation}
such that $(\Gamma,\pi)\then H$ is a Poisson algebroid and $(\Gamma,\pi)\toto A$ is a Poisson groupoid.
\end{definition}

It follows that $H$ and $A$ inherit Poisson structures turning $H\rightrightarrows M$ into a Poisson groupoid and $A\then M$ into a Poisson algebroid, respectivelly. The dual $\Gamma^*$ is naturally a Poisson LA-groupoid.

\begin{example}
LA-groupoids and Poisson VB-groupoids are particular cases of Poisson LA-groupoids, one with trivial Poisson structure, the other with trivial Lie-algebroid structure.
\end{example}

\begin{example}
Given a Poisson groupoid $(G,\pi)$, then $\Gamma=TG$ (equipped with the tangent lift of $\pi$) and $\Gamma^*=T^*G$ (with the canonical symplectic form) are examples of Poisson LA-groupoids in duality.
\end{example}

One can re-express the definition of Poisson LA-groupoid as follows. If $\Gamma$ is an LA-groupoid as in \eqref{eq:LAgrp}, its cotangent and tangent bundles have  LA-groupoid structures
\begin{equation}\label{eq:LAGTT*}
\begin{matrix}
T^*\Gamma & \rightrightarrows & \mathrm{Lie}(\Gamma)^* \\ 
\Downarrow &  & \Downarrow\\ 
\Gamma^* & \rightrightarrows & C^*,
\end{matrix}\qquad\quad 
\begin{matrix}
T\Gamma & \rightrightarrows & TA \\ 
\Downarrow &  & \Downarrow\\ 
TH & \rightrightarrows & TM,
\end{matrix}
\end{equation}
where $C$ is the core of $\Gamma$.
A Poisson structure $\pi$ makes $(\Gamma,\pi)$ into a Poisson LA-groupoid if and only if $\pi^\sharp: T^*\Gamma\to T\Gamma$ is an LA-groupoid morphism.
Moreover, since all spaces in the LA-groupoids $T^*\Gamma$ and $T\Gamma$ are Lie algebroids over the corresponding spaces of $\Gamma$, the map $\pi^\sharp$ is a morphism of {\em double Lie algebroid groupoids}: 
\begin{equation}\label{eq:poissonLALAmap}
\begin{matrix}
\xymatrix@R=6pt@C=6pt{
T^*\Gamma \ar@<0.5ex>[rr] \ar@<-0.5ex>[rr]   \ar@{=>}[dd] \ar@{=>}[rd]&  & {\mathrm{Lie}(\Gamma)^*} \ar@{=>}'[d][dd] \ar@{=>}[dr]& \\
 & \Gamma \ar@{=>}[dd] \ar@<0.5ex>[rr] \ar@<-0.5ex>[rr] &  & A\ar@{=>}[dd]\\
\Gamma^* \ar@<0.5ex>|!{[ru];[rd]}{\hole}[rr]
\ar@<-0.5ex>|!{[ru];[rd]}{\hole}[rr]
\ar@{=>}[dr]&  & C^* \ar@{=>}[dr]& \\
& H \ar@<0.5ex>[rr] \ar@<-0.5ex>[rr] & & M.}
\end{matrix}  
\stackrel{\pi^\sharp}{\longrightarrow}
\begin{matrix}
\xymatrix@R=6pt@C=6pt{
T\Gamma \ar@<0.5ex>[rr] \ar@<-0.5ex>[rr] \ar@{=>}[dd] \ar@{=>}[rd]&  & {TA} \ar@{=>}'[d][dd] \ar@{=>}[dr]& \\
 & \Gamma \ar@{=>}[dd] \ar@<0.5ex>[rr] \ar@<-0.5ex>[rr] &  & A\ar@{=>}[dd]\\
TH \ar@<0.5ex>|!{[ru];[rd]}{\hole}[rr]
\ar@<-0.5ex>|!{[ru];[rd]}{\hole}[rr]
\ar@{=>}[dr]&  & TM \ar@{=>}[dr]& \\
& H \ar@<0.5ex>[rr] \ar@<-0.5ex>[rr] & & M.}
\end{matrix} 
\end{equation}

When the Poisson structure $\pi$ is symplectic,
$\pi^\sharp$ restricts to an isomorphism between the bottom faces in \eqref{eq:poissonLALAmap} (as Poisson LA-groupoids), and leads to 
an analog of Proposition~\ref{prop:PDAsymp} for LA-groupoids:

\begin{proposition}
A Poisson LA-groupoid $\Gamma$ as in \eqref{eq:poissonLAgrp} with $\pi$ {\em symplectic} is isomorphic to the Poisson LA-groupoid defined by the cotangent bundle $(T^*H,\omega_{can})$ of the bottom Poisson groupoid $H\toto M$. 
\end{proposition}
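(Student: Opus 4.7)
The plan is to mirror the argument for Proposition~\ref{prop:PDAsymp} in the symplectic double-algebroid case, adapted to the groupoid direction via the LA-groupoid/PVB-groupoid duality \eqref{eq:PVBLAdual}. The starting point is that, when $\pi$ is symplectic, $\pi^\sharp:T^*\Gamma\to T\Gamma$ is a diffeomorphism and hence an isomorphism of the full triple structure in \eqref{eq:poissonLALAmap}.

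First I would look at $\pi^\sharp$ as a Lie algebroid morphism from $T^*\Gamma\Rightarrow \Gamma^*$ to $T\Gamma\Rightarrow TH$, whose base map $\Gamma^*\to TH$ is precisely the anchor $\rho_{\Gamma^*}$ of the Lie algebroid $\Gamma^*\Rightarrow H$ that is part of the Lie bialgebroid $(\Gamma,\Gamma^*)$ associated to the Poisson algebroid $(\Gamma,\pi)\Rightarrow H$ (as in the comments following Proposition~\ref{prop:symplalgbs}). Since $\pi^\sharp$ is simultaneously a morphism of the groupoid structures in \eqref{eq:poissonLALAmap}, this base map is actually an LA-groupoid morphism from $\Gamma^*\rightrightarrows C^*$ to $TH\rightrightarrows TM$, both sitting over the Lie groupoid $H\rightrightarrows M$, where $TH$ carries the canonical tangent LA-groupoid structure.

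Next, by Proposition~\ref{prop:symplalgbs} applied to the symplectic Poisson algebroid $(\Gamma,\pi)\Rightarrow H$, the anchor $\rho_{\Gamma^*}$ is an isomorphism of symplectic algebroids, hence by the preceding paragraph an isomorphism of LA-groupoids. Dualizing this via the correspondence \eqref{eq:PVBLAdual} over $H\rightrightarrows M$, and using that the VB-groupoid dual of $TH$ is $T^*H$ with its canonical symplectic form and the Lie groupoid structure $T^*H\rightrightarrows A_H^*$ determined by the Poisson groupoid $H$, yields an isomorphism $\Gamma\cong T^*H$ of PVB-groupoids covering the identity on $H\rightrightarrows M$. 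This dualization sends the Lie algebroid $\Gamma\Rightarrow H$ to $T^*H_{\pi_H}\Rightarrow H$ (the dual half of Proposition~\ref{prop:symplalgbs}) and $\pi^{-1}$ to $\omega_{can}$, giving the desired iso of symplectic Poisson LA-groupoids.

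The main technical obstacle is to track that the various duality operations interact correctly: the VB-groupoid dualization over $H\rightrightarrows M$ must be seen to intertwine the PVB-groupoid/LA-groupoid correspondence \eqref{eq:PVBLAdual} with the anchor-dualization of Proposition~\ref{prop:symplalgbs}, so that the resulting iso genuinely lives in the category of Poisson LA-groupoids. A key ingredient is that $H$ is indeed a Poisson groupoid (as already noted after Definition~\ref{def:PLA}, inherited from the multiplicativity of $\pi$ on $\Gamma$), ensuring that $T^*H$ has a well-defined LA-groupoid structure to begin with.
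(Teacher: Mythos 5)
Your argument is correct and follows essentially the same route as the paper's (sketched) proof: there, too, one observes that for $\pi$ symplectic the map $\pi^\sharp$ restricts on the bottom faces of \eqref{eq:poissonLALAmap} to an isomorphism of Poisson LA-groupoids $\Gamma^*\cong TH$ (this restriction being the anchor $\rho_{\Gamma^*}$, exactly as in the discussion after Proposition~\ref{prop:symplalgbs}), and then dualizes over $H\toto M$ via \eqref{eq:PVBLAdual} to obtain $(\Gamma,\pi)\cong (T^*H,\omega_{can})$. The only slip is terminological: $\rho_{\Gamma^*}$ is an isomorphism of \emph{Poisson} algebroids $(\Gamma^*,\pi_{\Gamma^*})\cong(TH,\pi_H^{tan})$, the symplectic structures appearing only on the dual side.
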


We now relate Poisson LA-groupoids to Poisson double algebroids.
Given $(\Gamma,\pi)$ a Poisson LA-groupoid as before, it automatically follows from the Lie theory of Poisson groupoids  and LA-groupoids 
that its differentiation,
\begin{equation}\label{eq:poissonLAGdiff}
\begin{matrix}
(\Gamma,\pi) & \rightrightarrows & A \\ 
\Downarrow &  & \Downarrow\\ 
H & \rightrightarrows & M
\end{matrix}
\qquad \mapsto \qquad
\begin{matrix}
(A_\Gamma,\mathrm{Lie}(\pi)) & \Rightarrow & A \\ 
\Downarrow &  & \Downarrow\\ 
A_H & \Rightarrow & M,
\end{matrix}
\end{equation}
is a double Lie algebroid whose top algebroid is a Poisson algebroid \cite{Mac-Xu,mac-double2},
but the compatibility between $A_\Gamma\then A_H$  and $\mathrm{Lie}(\pi)$ is not automatic and needs to be proven.
The same happens with integration: we know that if the (say, horizontal) top Lie algebroid of a Poisson double algebroid is integrable, then its source-simply-connected integration is an LA-groupoid \cite{BCdH} and a Poisson groupoid \cite{Mac-Xu2}, but it still remains to be verified that the integrated Poisson structure is compatible with the side Lie algebroid. In this context we have the following

\begin{theorem}\label{thm:diffintPLA}
\begin{enumerate}
    \item If $(\Gamma,\pi)$ is a Poisson LA-groupoid, then its differentiation $(A_\Gamma, \mathrm{Lie}(\pi))$ is a Poisson double algebroid.
    \item If $(\Omega,\pi_\Omega)$ is a Poisson double algebroid whose top algebroid is integrable, then its source-simply-connected integration  is a Poisson LA-groupoid.
\end{enumerate}
\end{theorem}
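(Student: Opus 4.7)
The overall strategy is to apply the Lie-second result for LA-groupoids and double Lie algebroids (Theorem~\ref{thm:Lie2LA}) to the sharp map $\pi^\sharp$. Recall from \eqref{eq:poissonTLAmap} and \eqref{eq:poissonLALAmap} that a Poisson-double-algebroid structure on $\Omega$ is equivalent to $\pi_\Omega^\sharp:T^*\Omega\to T\Omega$ being a morphism between the double Lie algebroids in \eqref{eq:DLATT*}, while a Poisson-LA-groupoid structure on $\Gamma$ is equivalent to $\pi_\Gamma^\sharp:T^*\Gamma\to T\Gamma$ being a morphism between the LA-groupoids in \eqref{eq:LAGTT*}. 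Under the canonical identifications $\mathrm{Lie}(T^*\Gamma)\cong T^*A_\Gamma$ and $\mathrm{Lie}(T\Gamma)\cong TA_\Gamma$ as double Lie algebroids, the differentiation $\mathrm{Lie}(\pi_\Gamma^\sharp)$ is intertwined with $(\mathrm{Lie}(\pi_\Gamma))^\sharp$, and similarly for integration. With this dictionary, Theorem~\ref{thm:Lie2LA} directly bridges both conditions.

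For part (1), the assumption that $(\Gamma,\pi)$ is a Poisson LA-groupoid means that $\pi^\sharp:T^*\Gamma\to T\Gamma$ is an LA-groupoid morphism. By Theorem~\ref{thm:Lie2LA}(1), $\mathrm{Lie}(\pi^\sharp):T^*A_\Gamma\to TA_\Gamma$ is a morphism of double Lie algebroids, and through the identifications above this map is exactly $(\mathrm{Lie}(\pi))^\sharp$, yielding the Poisson-double-algebroid property for $(A_\Gamma,\mathrm{Lie}(\pi))$. For part (2), let $(\Omega,\pi_\Omega)$ be a Poisson double algebroid with top algebroid $\Omega\Rightarrow A$ integrable, and let $\Gamma\toto A$ denote the source-simply-connected integration. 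By \cite[Thm.~5.3.5]{BCdH}, $\Gamma$ inherits the structure of an LA-groupoid, and by the Mackenzie-Xu correspondence $\pi_\Omega$ integrates to a multiplicative Poisson structure $\pi_\Gamma$ on $\Gamma\toto A$. To upgrade this to a Poisson LA-groupoid, it remains to check that $\pi_\Gamma^\sharp:T^*\Gamma\to T\Gamma$ is an LA-groupoid morphism between the objects \eqref{eq:LAGTT*}. Since $\Gamma$ is source-simply-connected, so are the LA-groupoids $T^*\Gamma$ and $T\Gamma$, and they integrate the double Lie algebroids $T^*\Omega$ and $T\Omega$ respectively. As $\pi_\Omega^\sharp:T^*\Omega\to T\Omega$ is a morphism of double Lie algebroids, Theorem~\ref{thm:Lie2LA}(2) produces a unique LA-groupoid morphism $\widetilde{\Phi}:T^*\Gamma\to T\Gamma$ integrating it. On the other hand $\pi_\Gamma^\sharp$ is the Lie-groupoid integration of $\pi_\Omega^\sharp$ (this is the content of the Mackenzie-Xu identification $\mathrm{Lie}(\pi_\Gamma)=\pi_\Omega$), so uniqueness of the integrating Lie-groupoid morphism forces $\widetilde{\Phi}=\pi_\Gamma^\sharp$.

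The one genuinely non-bookkeeping point is to be sure that the natural isomorphisms $\mathrm{Lie}(T^*\Gamma)\cong T^*A_\Gamma$ and $\mathrm{Lie}(T\Gamma)\cong TA_\Gamma$ (and their integrating counterparts, when $\Gamma$ is source-simply-connected) respect not only the double-Lie-algebroid/LA-groupoid structures but also the sharp maps induced by $\pi$ and $\mathrm{Lie}(\pi)$; once this is settled, Theorem~\ref{thm:Lie2LA} reduces both implications to the statement that $\pi^\sharp$ differentiates and integrates as a morphism of the appropriate double structures, so no direct manipulation of the multiplicativity or infinitesimal multiplicativity conditions is required.
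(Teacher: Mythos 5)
Your proposal is correct and follows essentially the same route as the paper: both reduce the statement to the characterization of the Poisson compatibilities via $\pi^\sharp$ being a morphism of the double structures in \eqref{eq:LAGTT*} and \eqref{eq:DLATT*}, invoke the natural identifications $\mathrm{Lie}(T^*\Gamma)\cong T^*\Omega$ and $\mathrm{Lie}(T\Gamma)\cong T\Omega$, and then apply Theorem~\ref{thm:Lie2LA}. Your slightly more explicit handling of part (2) (integrating $\pi_\Omega$ to a multiplicative $\pi_\Gamma$ and matching $\pi_\Gamma^\sharp$ with the integrated morphism by uniqueness) is just the detail the paper leaves implicit, not a different argument.
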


\begin{proof}
Let $\Gamma$ be an LA-groupoid and $\Omega=\mathrm{Lie}(\Gamma)$ be its infinitesimal double Lie algebroid. We know that $(\Gamma,\pi)$ is a Poisson LA-groupoid if and only if 
$\pi^\sharp:T\Gamma\to T^*\Gamma$ is an LA-groupoid morphism (with respect to \eqref{eq:LAGTT*}). Similarly, the Poisson structure  $\mathrm{Lie}(\pi)$ makes $\Omega$ into  a Poisson double algebroid if and only if $(\mathrm{Lie}(\pi))^\sharp=\mathrm{Lie}(\pi^\sharp) : T\Omega\to T^*\Omega$
is a double-Lie-algebroid morphism (with respect to \eqref{eq:DLATT*}).
Since $\Omega=\mathrm{Lie}(\Gamma)$ as double Lie algebroids, we also have natural identifications $\mathrm{Lie}(T^*\Gamma)=T^*\Omega$ and $\mathrm{Lie}(T\Gamma)=T\Omega$ as double Lie algebroids. 
The theorem now follows from Thm.~\ref{thm:Lie2LA}.
\end{proof}

\begin{remark}\label{rem:intPDA}
Building on Remark~\ref{rem:lie2more}, we observe that the arguments in the proof of Thm.~\ref{thm:Lie2LA} also imply the following general fact: If $\pi$ is a Poisson structure on a source-connected LA-groupoid $\Gamma$ \eqref{eq:LAgrp} such that  $(\Gamma\toto A,\pi)$ is a Poisson groupoid, then $(\Gamma,\pi)$ is a Poisson LA-groupoid provided  $(\Omega, \mathrm{Lie}(\pi))$ is a Poisson double algebroid. \hfill $\diamond$
\end{remark}

One can also reformulate Poisson LA-groupoids in the spirit of Def.~\ref{def:dlbalg}, as pairs of LA-groupoids $(\Gamma, \Gamma^*)$ which are in duality as VB-groupoids,
\begin{equation}\label{eq:LBgrp}
\begin{matrix}
\Gamma & \rightrightarrows & A \\ 
\Downarrow &  & \Downarrow\\ 
H & \rightrightarrows & M,
\end{matrix}
\qquad \;\; 
\begin{matrix}
\Gamma^* & \rightrightarrows & C^* \\ 
\Downarrow &  & \Downarrow\\ 
H & \rightrightarrows & M,
\end{matrix}
\end{equation}
and whose Lie-algebroid structures form a Lie bialgabroid $(\Gamma\then H, \Gamma^*\then H)$. Such pairs $(\Gamma,\Gamma^*)$ will be referred to as {\bf Lie-bialgebroid groupoids}.

We can now use Theorem~\ref{thm:diffintPLA} to relate Lie-bialgebroid groupoids and double Lie bialgebroids. Recall from \cite[Prop.~5.3.4]{BCdH} that if $\Gamma$ and $\Gamma^*$ are LA-groupoids in duality as VB-groupoids, then their corresponding double Lie algebroids $\Omega$ and $\Omega^*$ are in duality as VB-algebroids. By Theorem~\ref{thm:diffintPLA}, one can see that if $(\Gamma\then H, \Gamma^*\then H)$ is a Lie bialgebroid, then so is $(\Omega\then H, \Omega^*\then H)$, and the converse holds if $\Gamma$ is source-connected (see Remark~\ref{rem:intPDA}).

\begin{corollary}\label{cor:LBgdLB} If $\Gamma$ is a source-connected LA-groupoid, then $(\Gamma,\Gamma^*)$ is a Lie-bialgebroid groupoid if and only if $(\Omega,\Omega^*)$ is a double Lie bialgebroid.
\end{corollary}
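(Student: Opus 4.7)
The plan is to reduce the assertion to Theorem~\ref{thm:diffintPLA} and Remark~\ref{rem:intPDA} after rephrasing both sides of the equivalence in terms of Poisson structures. The key dictionary is: a Lie-bialgebroid groupoid $(\Gamma,\Gamma^*)$ is the same thing as a Poisson LA-groupoid $(\Gamma,\pi_\Gamma)$, and a double Lie bialgebroid $(\Omega,\Omega^*)$ is the same thing as a Poisson double algebroid $(\Omega,\pi_\Omega)$, in full analogy with Prop.~\ref{prop:pda-dlba}.

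First I would unpack the algebroid side: by the duality \eqref{dlapvbdual} (applied vertically), a double-Lie-algebroid structure on $\Omega^*$ compatible with the VB-algebroid structure on $\Omega\Rightarrow A$ is the same as a Poisson structure $\pi_\Omega$ on $\Omega$ making $(\Omega\Rightarrow A,\pi_\Omega)$ a Poisson VB-algebroid. The extra Lie-bialgebroid condition $(\Omega\Rightarrow B, \Omega^*\Rightarrow B)$ in Def.~\ref{def:dlbalg} is then precisely the condition that $(\Omega\Rightarrow B,\pi_\Omega)$ be a Poisson algebroid, by the Mackenzie-Xu correspondence of $\S$\ref{subsec:poisalg}. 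Thus $(\Omega,\Omega^*)$ is a double Lie bialgebroid iff $(\Omega,\pi_\Omega)$ is a Poisson double algebroid. A strictly parallel argument using \eqref{eq:PVBLAdual} and the groupoid version of Mackenzie-Xu shows that $(\Gamma,\Gamma^*)$ is a Lie-bialgebroid groupoid iff $(\Gamma,\pi_\Gamma)$ is a Poisson LA-groupoid, where $\pi_\Gamma$ is the Poisson structure on $\Gamma$ dual to the LA-groupoid structure on $\Gamma^*$.

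Next I would match the two Poisson structures under differentiation. By \cite[Prop.~5.3.4]{BCdH}, the Lie functor commutes with vertical dualization at the level of VB-groupoids/VB-algebroids, so $\mathrm{Lie}(\Gamma^*)=\Omega^*$ as double Lie algebroids, and consequently the Poisson structure associated with $\Gamma^*$ differentiates to the Poisson structure associated with $\Omega^*$; in short, $\mathrm{Lie}(\pi_\Gamma)=\pi_\Omega$. With this identification in place, the forward implication of the corollary becomes an immediate application of Theorem~\ref{thm:diffintPLA}(1): if $(\Gamma,\pi_\Gamma)$ is a Poisson LA-groupoid, then $(\Omega,\pi_\Omega)=(\Omega,\mathrm{Lie}(\pi_\Gamma))$ is a Poisson double algebroid, hence $(\Omega,\Omega^*)$ is a double Lie bialgebroid.

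For the converse, assume $(\Omega,\Omega^*)$ is a double Lie bialgebroid, so that $(\Omega,\pi_\Omega)$ is a Poisson double algebroid. The LA-groupoid structure on $\Gamma^*$ already forces $(\Gamma\toto A,\pi_\Gamma)$ to be a Poisson groupoid via \eqref{eq:PVBLAdual}. Using source-connectedness of $\Gamma$, Remark~\ref{rem:intPDA} then upgrades this to the conclusion that $(\Gamma,\pi_\Gamma)$ is a Poisson LA-groupoid, i.e.\ that $(\Gamma,\Gamma^*)$ is a Lie-bialgebroid groupoid. The main obstacle in this plan is really the bookkeeping verification that the various duality-assigned Poisson structures are compatible with the Lie functor in a coherent way; once that is granted, the corollary is just an instance of the Lie-second theorem for LA-groupoids packaged in Theorem~\ref{thm:diffintPLA}.
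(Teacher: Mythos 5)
Your proposal is correct and follows essentially the same route as the paper: the paper also identifies Lie-bialgebroid groupoids with Poisson LA-groupoids and double Lie bialgebroids with Poisson double algebroids via the dualities \eqref{eq:PVBLAdual} and \eqref{dlapvbdual}, invokes \cite[Prop.~5.3.4]{BCdH} for the compatibility of the Lie functor with dualization, and then applies Theorem~\ref{thm:diffintPLA}(1) for the forward direction and Remark~\ref{rem:intPDA} (using source-connectedness and the automatic Poisson-groupoid condition on $\Gamma\toto A$) for the converse. No gaps beyond the paper's own level of detail.
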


\subsection{Poisson double groupoids}

A {\bf double Lie groupoid} (see \cite{brmk} and references therein) consists of a diagram of Lie groupoids
\begin{equation}\label{eq:DG}
\begin{matrix}
\Sigma & \rightrightarrows & G \\ 
\downdownarrows &  & \downdownarrows\\ 
H & \rightrightarrows & M
\end{matrix}
\end{equation}
such that (i) the  double source map $\Gamma\to G\times_M H$ is a surjective submersion, where $G\times_M H$ is the fiber product with respect to the source maps of $G$ and $H$, and (ii)
the horizontal groupoid structural maps are morphisms of the vertical Lie groupoids. 
In particular, for the multiplication map one must use the fact that the space of composable arrows 
$\Sigma\times_G \Sigma$ is a Lie groupoid over the composable arrows $H\times_M H$.
Condition (ii) has an equivalent version (ii') interchanging horizontal with vertical. 

A {\bf Poisson double groupoid} \cite{mk2} is a double Lie groupoid $\Sigma$ equipped with a Poisson structure $\pi$,
$$
\begin{matrix}
(\Sigma,\pi) & \rightrightarrows & G \\ \downdownarrows &  & \downdownarrows\\ 
H & \rightrightarrows & M,
\end{matrix}
$$
such that $(\Sigma,\pi)\rightrightarrows G$ and $(\Sigma,\pi)\rightrightarrows H$ are Poisson groupoids.
Alternatively, 
one can consider the natural double-Lie-groupoid structures on $T^*\Sigma$ and $T\Sigma$ (see \cite[Thm.~1.4]{mk2}) and require $\pi^\sharp: T^*\Sigma\to T\Sigma$ to be a morphism of double groupoids. 
As shown by Mackenzie \cite[Thm.~2.6]{mk2}, the Poisson structures induced on the sides of a double Poisson groupoid make them into Poisson groupoids, and the core inherits the structure of a Poisson groupoid as well.

Poisson double groupoids are common generalizations of Poisson groupoids  and symplectic double groupoids \cite{LuWe,We88}. Another class of examples is given by the so-called {\em Poisson 2-groups} \cite{CSX2}, that will be recalled in Section~\ref{sec:lie2bi}. New examples of Poisson double groupoids have been recently described in  \cite{daniel2}.

Mackenzie showed in \cite{mac-double1} that by differentiating the vertical groupoid structures in \eqref{eq:DG} we obtain an LA-groupoid:
\begin{equation}\label{eq:LieDGLAG}
\begin{matrix}
\Sigma & \rightrightarrows & G \\ 
\downdownarrows &  & \downdownarrows\\ 
H & \rightrightarrows & M
\end{matrix}\qquad \stackrel{\mathrm{Lie}}{\mapsto} \qquad 
\begin{matrix}
A_\Sigma & \rightrightarrows & A_G \\ 
\Downarrow &  & \Downarrow\\ 
H & \rightrightarrows & M.
\end{matrix}
\end{equation}
It follows that differentiating a Poisson double groupoid $(\Sigma,\pi)$ gives rise to an LA-groupoid $\Gamma$ equipped with a Poisson structure $\pi_\Gamma=\mathrm{Lie}(\pi)$ that is compatible with 
the Lie-algebroid structure, but one needs to prove that it is also compatible with the groupoid structure.


The integration of Poisson LA-groupoids is much subtler; in fact, even the integration of LA-groupoids is not fully understood yet (see e.g. \cite{luca,luca1}). For an LA-groupoid, its source-simply-connected integration may not carry a double Lie groupoid structure (see \cite[Ex.~2.4]{daniel}), but it is not known whether another integration with this property can always be found.  We will see in $\S$~\ref{subsec:Lie2alg} a class of LA-groupoids (``Lie 2-algebras'') for which this integration is always possible. 

We have the following result relating Poisson double groupoids and Poisson LA-groupoids.

\begin{theorem}\label{thm:diffintegpis}
\begin{enumerate}
    \item If $(\Sigma,\pi)$ is a Poisson double groupoid, then its differentiation $(\Gamma,\pi_\Gamma)$ is a Poisson LA-groupoid.
    \item  Let  $(\Gamma,\pi_\Gamma)$ be a Poisson LA-groupoid, and suppose that its source-simply-connected integration  $\Sigma$ is a double Lie groupoid such that the groupoid of composable pairs $\Sigma\times_G\Sigma \toto H\times_M H$ is source connected. Then the integrated Poisson structure $\pi$ makes $\Sigma$ into a Poisson double groupoid.
    \end{enumerate}
\end{theorem}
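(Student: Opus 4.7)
The plan is to reduce both directions to applications of Proposition~\ref{prop:coisot}, used on the graph of horizontal multiplication viewed as a Lie subgroupoid in the vertical direction. First I would fix the vertical Poisson groupoid $(\Sigma\downdownarrows H,\pi)$: in part~(1) this is part of the hypothesis, and in part~(2) it is the Mackenzie--Xu integration of the Poisson algebroid $(\Gamma\then H,\pi_\Gamma)$. Consequently, $\Sigma\times\Sigma\times\overline{\Sigma}\downdownarrows H\times H\times H$ is a vertical Poisson groupoid with bivector $\pi\oplus\pi\oplus(-\pi)$, and the standard multiplicativity criterion identifies $(\Sigma\toto G,\pi)$ being a Poisson groupoid with coisotropicity of the graph
\[
\Delta_h=\{(\sigma_1,\sigma_2,\sigma_1\cdot_h\sigma_2)\mid(\sigma_1,\sigma_2)\in\Sigma\times_G\Sigma\}\subset\Sigma\times\Sigma\times\overline{\Sigma},
\]
and analogously identifies $(\Gamma\toto A,\pi_\Gamma)$ being a Poisson groupoid with coisotropicity of the corresponding graph $\Delta_h^\Gamma\subset\Gamma\times\Gamma\times\overline{\Gamma}$.

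Next I would verify that the double-Lie-groupoid axioms make $\Delta_h$ into a Lie subgroupoid of the vertical groupoid $\Sigma^3\toto H^3$, canonically isomorphic to $\Sigma\times_G\Sigma\toto H\times_M H$ via $(\sigma_1,\sigma_2)\mapsto(\sigma_1,\sigma_2,\sigma_1\cdot_h\sigma_2)$. Through the Lie functor~\eqref{eq:LieDGLAG}, its vertical Lie algebroid is identified with $\Delta_h^\Gamma\subset\Gamma^3$, because the horizontal multiplication on the LA-groupoid $\Gamma$ is by construction the Lie differentiation of the horizontal multiplication on $\Sigma$. Part~(1) then follows because coisotropicity of $\Delta_h$ (from the hypothesis that $(\Sigma\toto G,\pi)$ is Poisson) transfers to coisotropicity of $\Delta_h^\Gamma$ via the direction of Proposition~\ref{prop:coisot} that does not require source-connectedness, so $(\Gamma\toto A,\pi_\Gamma)$ is a Poisson groupoid; together with the Mackenzie--Xu Poisson algebroid $(\Gamma\then H,\pi_\Gamma)$ obtained by differentiating $(\Sigma\downdownarrows H,\pi)$, this gives the Poisson LA-groupoid. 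For part~(2), the converse direction of Proposition~\ref{prop:coisot} transfers coisotropicity of $\Delta_h^\Gamma$ (which holds because $(\Gamma\toto A,\pi_\Gamma)$ is Poisson) to coisotropicity of $\Delta_h$, provided $\Delta_h$ is source-connected; the identification $\Delta_h\cong\Sigma\times_G\Sigma\toto H\times_M H$ together with the source-connectedness hypothesis supplies exactly this.

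The hard part will be the careful identification of the vertical Lie algebroid of $\Delta_h$ with $\Delta_h^\Gamma$. This requires unwinding how the horizontal operations on $\Sigma$, including the compatibility between the vertical Lie algebroid of $\Sigma\times_G\Sigma$ and $\Gamma\times_A\Gamma$, differentiate to the horizontal LA-groupoid operations on $\Gamma$. Once this compatibility is extracted from Mackenzie's construction of the Lie functor on double Lie groupoids, both directions reduce cleanly to Proposition~\ref{prop:coisot}.
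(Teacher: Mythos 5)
Your proposal is correct and follows essentially the same route as the paper's proof: both reduce multiplicativity over the horizontal groupoid to coisotropicity of the graph of the horizontal multiplication, viewed as a vertical subgroupoid of $\Sigma\times\Sigma\times\overline{\Sigma}\toto H\times H\times H$, identify its vertical Lie algebroid with $\mathrm{graph}(m_\Gamma)$ (in the paper via $\mathrm{Lie}(\Sigma\times_G\Sigma)\cong A_\Sigma\times_{A_G}A_\Sigma$ and $m_\Gamma=\mathrm{Lie}(m_{\Sigma,G})$), and then apply Proposition~\ref{prop:coisot} in the two directions, using Mackenzie--Xu integration and the source-connectedness of $\Sigma\times_G\Sigma\toto H\times_M H$ exactly where you do.
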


\begin{proof}
To prove (1), 
we must check that $(\Gamma,\pi_\Gamma)= (A_\Sigma,\mathrm{Lie}(\pi))$ is a Poisson groupoid over $A_G$. 
The fact that  $(\Sigma,\pi)\toto G$ is a Poisson groupoid means that the graph of the multiplication map $m_{\Sigma,G} : \Sigma\times_G \Sigma\to \Sigma$
is a coisotropic submanifold of $\Sigma\times \Sigma\times \overline{\Sigma}$, i.e., $\mathrm{graph}(m_{\Sigma,G}) \toto \mathrm{graph}(m_{H})$ is a coisotropic subgroupoid of $\Sigma\times \Sigma\times \overline{\Sigma} \toto H\times H \times H$.
The  composable pairs $A_\Sigma\times_{A_G}A_\Sigma$ define a Lie algebroid over $H\times_M H$ that is identified with $\mathrm{Lie}(\Sigma\times_G \Sigma)\then H\times_M H$ (see e.g. \cite[Prop.~A.3.1]{BCdH}), and the LA-groupoid multiplication $m_\Gamma$ agrees with
$\mathrm{Lie}(m_{\Sigma,G})$:
$$\begin{matrix}
\Sigma\times_G\Sigma & \stackrel{m_{\Sigma,G}}{\longrightarrow} & \Sigma \\ 
\downdownarrows &  & \downdownarrows\\ 
H\times_M H & \stackrel{m_H}{\longrightarrow} & H.
\end{matrix}
\qquad \mapsto \qquad
\begin{matrix}
A_\Sigma\times_{A_G}A_\Sigma & \stackrel{m_\Gamma}{\longrightarrow} & A_\Sigma \\ 
\Downarrow &  & \Downarrow\\ 
H\times_M H & \stackrel{m_H}{\longrightarrow} & H.
\end{matrix}
$$
By Proposition~\ref{prop:coisot}, $\mathrm{Lie}(\mathrm{graph}(m_{\Sigma,G}))=\mathrm{graph}(m_\Gamma)$ is a coisotropic subalgebroid of $\Gamma\times \Gamma\times \overline{\Gamma} \then H\times H\times H$. The fact that it is coisotropic says that $(\Gamma,\pi_\Gamma)\toto A_G$ is a Poisson groupoid,  hence $(\Gamma,\pi_\Gamma)$ is a Poisson LA-groupoid.

We prove (2) by reversing the arguments. 
Since $\Sigma\toto H$ is source simply connected, there exists a unique $\pi$ on $\Sigma$ such that $(\Sigma,\pi)\toto H$ is a Poisson groupoid and $\pi_\Gamma=\mathrm{Lie}(\pi)$.
Since $(\Gamma,\pi_\Gamma)$ is a Poisson LA-groupoid, then $\mathrm{graph}(m_\Gamma)=\mathrm{Lie}(\mathrm{graph}(m_{\Sigma,G}))$ is a coisotropic subalgebroid of $\Gamma\times \Gamma\times \overline{\Gamma} \then H\times H\times H$. 
Since we assume that $\Sigma\times_G\Sigma \toto H\times_M H$ is source connected, by Prop.~\ref{prop:coisot},   $\mathrm{graph}(m_{\Sigma,G}) \toto \mathrm{graph}(m_{H})$ is a coisotropic subgroupoid of $\Sigma\times \Sigma\times \overline{\Sigma} \toto H\times H \times H$, which implies that $(\Sigma,\pi)\toto G$ is a Poisson groupoid, and hence $(\Sigma,\pi)$ is a Poisson double groupoid.
\end{proof}

\begin{remark}
The source simply connectedness assumption in part (2) is required to integrate the Poisson structure. 
More generally, consider $\Sigma$ a double Lie groupoid and $\pi$ a Poisson structure turning $\Sigma \rightrightarrows H$ into a Poisson groupoid. When $\Sigma\times_G\Sigma \toto H\times_M H$ is source connected, $(\Sigma,\pi)$ is a Poisson double groupoid if and only if $(A_\Sigma,\mathrm{Lie}(\pi))$ is a Poisson LA-groupoid. \hfill $\diamond$
\end{remark}


The composition of \eqref{eq:LieDGLAG} and \eqref{eq:LAgrLie} gives a two-step differentiation procedure from double Lie groupoids to double Lie algebroids \cite{mac-double1,mac-double2}. 
By combining Theorems~\ref{thm:diffintegpis} and \ref{thm:diffintPLA} we obtain a two-step differentiation procedure from Poisson double groupoids to Poisson double algebroids, or equivalently, double Lie bialgebroids (as well as conditions for the corresponding integrations):
$$
\xymatrix@R=10pt{
\mbox{Poisson double groupoids} \ar[d]^{\mathrm{Lie}}\\ 
\mbox{Poisson LA-groupoids} \ar[d]^{\mathrm{Lie}} \\ 
\mbox{Poisson double algebroids $\Leftrightarrow$ double Lie bialgebroids}.
}
$$
If one instead uses horizontal differentiation in the first step, the resulting double Lie algebroid is unchanged, up to natural isomorphism, and the resulting Poisson double algebroids will be naturally isomorphic too.

Starting with a {\em symplectic} double groupoid, the two-step differentiation leads to a {\em symplectic} double algebroid, which is necessarily the cotangent bundle of a Lie bialgebroid by Proposition~\ref{prop:PDAsymp}. 
This recovers  \cite[Thm.2.11]{mk2} and implies the following result, cf.  \cite[Thm.~2.9]{mk2}.

\begin{corollary}
 For a symplectic double groupoid, the Lie bialgebroids corresponding to its side Poisson groupoids are in duality.
 \end{corollary}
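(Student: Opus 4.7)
The plan is to feed $(\Sigma,\omega)$ into the two-step differentiation procedure developed above, use Proposition~\ref{prop:PDAsymp} to identify the resulting symplectic double algebroid, and then match the induced Lie algebroid structures on its sides with the Lie bialgebroids given by Mackenzie--Xu applied to the side Poisson groupoids $(G,\pi_G)$ and $(H,\pi_H)$.

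Let $\Sigma$ have horizontal side $G$ and vertical side $H$. Composing Theorems~\ref{thm:diffintegpis} and \ref{thm:diffintPLA}, two-step differentiation yields a Poisson double algebroid
$$
\begin{matrix}
(\Omega,\pi_\Omega) & \Longrightarrow & A_G \\ \Downarrow & & \Downarrow\\ A_H & \Longrightarrow & M,
\end{matrix}
$$
whose Poisson tensor $\pi_\Omega$ is symplectic, being the iterated linearisation of the symplectic bivector dual to $\omega$. By Proposition~\ref{prop:PDAsymp}, $(\Omega,\pi_\Omega)\cong (T^*A_G,\omega_{can})$, and the sides carry a Lie bialgebroid structure $(A_G,A_G^*)$ with $A_G^*\cong A_H$ as Lie algebroids. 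Here the Lie algebroid structure on $A_H$ is the one inherited from the vertical groupoid $\Sigma\downdownarrows H$, and the one on $A_G^*\cong A_H$ is dual to the side Poisson algebroid $(A_G,\pi_{A_G})$ provided by Proposition~\ref{prop:properties}.

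The main step is to identify $(A_G,A_H)$ with the Lie bialgebroid of $(G,\pi_G)$, which by Mackenzie--Xu is $(A_G,A_G^*)$ with Lie algebroid on $A_G^*$ dual to $\mathrm{Lie}(\pi_G)$ on $A_G$. The problem thus reduces to verifying the identity $\mathrm{Lie}(\pi_G)=\pi_{A_G}$ of linear Poisson structures on $A_G$. This is the main technical obstacle: both bivectors are built by combining ``differentiate'' and ``pass to a side'' applied to $\Sigma$, but in opposite orders. I would verify the equality through the intermediate Poisson LA-groupoid $(A_\Sigma,\mathrm{Lie}^v(\pi))\toto A_G$ (over $H$) obtained by vertical differentiation of $\Sigma$: this object is simultaneously the Lie algebroid of the Poisson groupoid $(\Sigma\toto G,\omega^{-1})$, whose base Poisson structure is $\pi_G$, and a Poisson groupoid over $A_G$ whose base Poisson structure is $\pi_{A_G}$ (because its further horizontal differentiation returns $(\Omega,\pi_\Omega)$ via Theorem~\ref{thm:diffintPLA}). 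The sought equality then follows from the naturality of the Lie functor with respect to taking base Poisson structures of Poisson groupoids.

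Finally, the symmetry of the two-step differentiation (horizontal versus vertical first yield canonically isomorphic Poisson double algebroids, as noted in the text) allows the same argument to identify the Lie bialgebroid of $(H,\pi_H)$ with $(A_H,A_G)$. Since $(A_H,A_G)$ is just $(A_G,A_H)$ with the roles of bundle and dual interchanged, the Lie bialgebroids of the side Poisson groupoids are in duality, as required.
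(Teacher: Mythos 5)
Your proposal is correct and follows essentially the paper's own route: the corollary is obtained by composing the two-step differentiation of Theorems~\ref{thm:diffintegpis} and \ref{thm:diffintPLA}, noting that nondegeneracy is preserved, and applying Proposition~\ref{prop:PDAsymp} to identify the resulting symplectic double algebroid with the cotangent double of its side bialgebroid, whence $A_H\cong A_G^*$ in duality. The consistency check $\mathrm{Lie}(\pi_G)=\pi_{A_G}$ that you add is left implicit in the paper and your argument for it is sound (via the horizontal source map being a Poisson morphism of the vertical Poisson groupoids and uniqueness of base Poisson structures), though your phrase ``the Lie algebroid of the Poisson groupoid $(\Sigma\toto G,\omega^{-1})$'' should rather say ``the image of that Poisson groupoid under vertical differentiation,'' since the intermediate object is the Lie algebroid of $\Sigma\downdownarrows H$, not of $\Sigma\toto G$.
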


Although our results give information about the integration of Lie bialgebroids to symplectic double groupoids, Theorem~ \ref{thm:diffintegpis} involves hypothesis which restrict its applicability (cf. \cite[Sec.~2.4]{luca} for another approach). It would be interesting to prove a general version in the context of ``local'' integration (as e.g. in \cite{CMS}).



\section{Application: another look at Lie 2-bialgebras}\label{sec:lie2bi}

This section concerns double structures with a {\em trivial side}, which provide an approach to Lie 2-bialgebras \cite{BSZ,CSX} via Poisson double structures. 
We start by revisiting  Lie 2-algebras (see e.g. \cite{BaezCrans}) in light of the objects discussed in Section~\ref{sec:DS}.
When referring to Lie 2-(bi)algebras we will always mean their ``strict'' versions (though our methods can be adapted to handle weaker notions).

\subsection{Lie 2-algebras revisited}\label{subsec:Lie2alg}
A {\bf 2-vector space} is a  VB-groupoid
$$
\begin{matrix}
\Gamma & \rightrightarrows & \mathfrak{g}_0 \\ 
\downarrow &  & \downarrow\\ 
* & \rightrightarrows & *.
\end{matrix}
$$
As recalled in Example~\ref{ex:VBgr2term},  $\Gamma$ is encoded on two vector spaces $\mathfrak{g}_0$ and $\mathfrak{g}_1$ (its side and core) and a linear map $\partial: \mathfrak{g}_1\to \mathfrak{g}_0$, since $\Gamma$ is the action groupoid for the action of $\mathfrak{g}_1$ (viewed as an abelian group) on $\mathfrak{g}_0$ by $c\cdot v = v+\partial(c)$. 
Along these lines \cite{BaezCrans}, a {\bf Lie 2-algebra} is an LA-groupoid of type
\begin{equation}\label{eq:Lie2algebra}
\begin{matrix}
\Gamma & \rightrightarrows & \mathfrak{g}_0 \\ 
\Downarrow &  & \Downarrow\\ 
* & \rightrightarrows & *.
\end{matrix}
\end{equation}

By means of differentiation and integration, Lie 2-algebras are in correspondence with special types of double Lie groupoids and double Lie algebroids:
\begin{equation}\label{eq:Lie2algequiv}
\begin{matrix}
{G} & \toto & G_0 \\ \downdownarrows & & \downdownarrows\\ * & \toto & *
\end{matrix} \quad \;\; \leftrightharpoons \quad \;\; 
\begin{matrix}
\Gamma & \rightrightarrows & \mathfrak{g}_0 \\ 
\Downarrow &  & \Downarrow\\ 
* & \rightrightarrows & *
\end{matrix} \quad \;\;  \leftrightharpoons \quad \;\;
\begin{matrix}
\mathfrak{g} & \Longrightarrow & \mathfrak{g}_0 \\ 
\Downarrow &  & \Downarrow\\ 
* & \Longrightarrow & *.
\end{matrix}
\end{equation}

The correspondence on the right is a special instance of the equivalence between double Lie algebroids and (source simply connected) LA-groupoids  \cite[Thm.~5.3.5]{BCdH}. 
The next result explicitly describes the information codified on the infinitesimal side.

\begin{proposition}\label{prop:dlacrossed}
A double-Lie-algebroid structure of type  \begin{equation}\label{eq:DLAcm}
    \begin{matrix}
\mathfrak{g} & \Longrightarrow & \mathfrak{g}_0 \\ \Downarrow & & \Downarrow\\ * & \Longrightarrow & *,
\end{matrix}
\end{equation}
is equivalent to the following data: a Lie algebra $\mathfrak{g}_0$, a vector space $\mathfrak{g}_1$, a linear map $\partial:\mathfrak{g}_1 {\to} \mathfrak{g}_0$ and a representation $\mathfrak{g}_0\to \mathrm{End}(\mathfrak{g}_1)$, such that
 \begin{itemize}
     \item[(a)] $\partial(v\cdot c) = [v,\partial(c)]_0$, for $v\in \mathfrak{g}_0$, $c\in \mathfrak{g}_1$,
     \item[(b)] $(\partial c_1)  \cdot c_2 = - (\partial c_2) \cdot c_1$, for $c_1,c_2 \in \mathfrak{g}_1$,
 \end{itemize}
where ``$\cdot$'' denotes the $\mathfrak{g}_0$-ation on $\mathfrak{g}_1$.
\end{proposition}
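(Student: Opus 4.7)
My plan is to extract the four pieces of data from the two VB-algebroid structures of the double Lie algebroid, and then derive conditions (a) and (b) from the mutual compatibility.

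Since $B = *$ is trivial, the double vector bundle $\mathfrak{g}$ admits a canonical splitting $\mathfrak{g} = \mathfrak{g}_0 \oplus \mathfrak{g}_1$. Applying Example~\ref{ex:LArepresentations}(a) to the vertical VB-algebroid $\mathfrak{g} \Rightarrow *$ (whose bottom vector bundle $* \to *$ is trivial) extracts a Lie algebra structure on $\mathfrak{g}_0$ together with a representation $\mathfrak{g}_0 \to \End(\mathfrak{g}_1)$, exhibiting $\mathfrak{g}$ as the semidirect-product Lie algebra $\mathfrak{g}_0 \ltimes \mathfrak{g}_1$ with $\mathfrak{g}_1$ an abelian ideal. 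Applying Example~\ref{ex:2term} to the horizontal VB-algebroid $\mathfrak{g} \Rightarrow \mathfrak{g}_0$ (whose bottom Lie algebroid is the trivial $* \Rightarrow *$) shows that it is completely encoded by the horizontal anchor restricted to the core, producing the linear map $\partial: \mathfrak{g}_1 \to \mathfrak{g}_0$. This accounts for the four pieces of data in the statement.

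To derive (a), I use that the horizontal anchor $\rho_h: \mathfrak{g} \to T\mathfrak{g}_0$ is a morphism of double Lie algebroids by~\eqref{eq:anchordla}, hence in particular a morphism of vertical Lie algebras. Since $\rho_h$ is a double vector bundle map that is the identity on the sides, it takes the form $\rho_h(v, c) = (v, \partial c)$. The vertical Lie algebra on $T\mathfrak{g}_0$ is the tangent Lie algebra $\mathfrak{g}_0 \ltimes_{\mathrm{ad}} \mathfrak{g}_0$, while that on $\mathfrak{g}$ is $\mathfrak{g}_0 \ltimes \mathfrak{g}_1$; a direct computation of the Lie algebra morphism condition yields precisely (a). Condition (b) is the subtler compatibility, and the cleanest route is via the Weil algebra from Subsection~\ref{subsec:weil}: with $\mathcal{W}(\mathfrak{g}) = \wedge^\bullet \mathfrak{g}_0^* \otimes S^\bullet \mathfrak{g}_1^*$ and generators $\alpha \in \mathfrak{g}_0^*$ and $\gamma \in \mathfrak{g}_1^*$ of bidegrees $(1,0)$ and $(1,1)$, a degree count forces $\delta_v \gamma = 0$ (since the target bidegree $(1,2)$ is trivial); then $(\delta_h \delta_v + \delta_v \delta_h)\gamma$ lies in $S^2 \mathfrak{g}_1^*$, and its vanishing after symmetrization is exactly condition (b).

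For the converse, given the four pieces of data satisfying (a) and (b), I would construct the Weil-algebra differentials $\delta_h, \delta_v$ by their action on generators as dictated by the data, and then verify that conditions (a), (b), the Jacobi identity on $\mathfrak{g}_0$, and the representation axiom are exactly what is needed for $\delta_h^2 = \delta_v^2 = \delta_h \delta_v + \delta_v \delta_h = 0$. The main obstacle will be pinning down condition (b); the Weil-algebra route makes this essentially automatic from degree bookkeeping, whereas a direct argument would require carefully tracking the horizontal Lie-algebroid bracket of core-linear sections, which is where (b) is encoded.
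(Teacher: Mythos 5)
Your proposal is correct, and its first half is exactly the paper's argument: the paper likewise extracts the four pieces of data by applying Example~\ref{ex:LArepresentations}(a) to the vertical VB-algebroid (Lie bracket on $\mathfrak{g}_0$ plus the representation on the core $\mathfrak{g}_1$, with $\mathfrak{g}=\mathfrak{g}_0\ltimes\mathfrak{g}_1$) and Example~\ref{ex:2term} to the horizontal one (the map $\partial=\rho|_{\mathfrak{g}_1}$). Where you genuinely diverge is in the compatibility step: the paper does not carry out this verification at all, stating only that the double-Lie-algebroid condition ``can be checked'' to amount to (a) and (b) and citing \cite[Thm.~3.5]{GJMM}, whereas you derive (a) from the horizontal anchor being a morphism of the vertical Lie algebras (via \eqref{eq:anchordla}, with $T\mathfrak{g}_0\cong\mathfrak{g}_0\ltimes_{\mathrm{ad}}\mathfrak{g}_0$ and $\rho_{hor}(v,c)=(v,\partial c)$ — correct, since the trivial vertical side rules out any extra component) and obtain (b), together with the converse, from the Weil-algebra characterization of double Lie algebroids recalled in $\S$~\ref{subsec:weil} (\cite{pike}): your degree counts are right, $\delta_v\gamma=0$ for $\gamma\in\mathfrak{g}_1^*$ since the bidegree $(1,2)$ component vanishes, $\delta_v^2=0$ is automatic, and the graded commutation $\delta_h\delta_v+\delta_v\delta_h=0$ on generators gives (a) on $\mathfrak{g}_0^*$ and (b) on $\mathfrak{g}_1^*$, while $\delta_h^2=0$ encodes Jacobi and the representation axiom. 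This buys you a self-contained proof where the paper outsources the computation, and it meshes with the Weil-algebra discussion the paper gives right after the proposition; note only that once you invoke the Weil-algebra equivalence in both directions, your separate anchor argument for (a) becomes redundant (though harmless), since (a) already falls out of the anticommutator on the $\mathfrak{g}_0^*$ generators.
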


\begin{proof}
As a double vector bundle, $\mathfrak{g}=\mathfrak{g}_0\oplus \mathfrak{g}_1$, the direct sum of its side and core vector spaces.
We know from Example~\ref{ex:LArepresentations} (a) that the vertical VB-algebroid structure on \eqref{eq:DLAcm}
is equivalent to a Lie bracket on $\mathfrak{g}_0$ and Lie-algebra representation  $\mathfrak{g}_0\to \mathrm{End}(\mathfrak{g_1})$, 
while 
Example ~\ref{ex:2term} says that  the horizontal VB-algebroid structure is equivalent to a linear map $\partial: \mathfrak{g}_1 {\to} \mathfrak{g}_0$.
The compatibility condition making \eqref{eq:DLAcm} into a double Lie algebroid can be checked to agree with conditions (a) and (b)  (which also follows from \cite[Thm.~3.5]{GJMM}).
\end{proof}

The data arising in the previous proposition can be interpreted as follows. A {\bf Lie-algebra crossed module} (see e.g. \cite{BaezCrans}), denoted by $[\mathfrak{g}_1\stackrel{\partial}{\to}\mathfrak{g}_0]$, consists of Lie algebras $\mathfrak{g}_0$ and $\mathfrak{g}_1$, a Lie algebra morphism $\partial: \mathfrak{g}_1\to \mathfrak{g}_0$ and an action of $\mathfrak{g}_0$ on $\mathfrak{g}_1$ by derivations, written as $(v,c)\mapsto v\cdot c$, for $v\in \mathfrak{g}_0$ and $c\in \mathfrak{g}_1$, such that  
\begin{itemize}
\item $\partial(v\cdot c)  = [v,\partial(c)]_0, \;\; \mbox{ for } v\in \mathfrak{g}_0, \, c\in \mathfrak{g}_1$,
\item   $ \partial(c_1)\cdot c_2  = [c_1,c_2]_1, \;\; \mbox{ for }\, c_1,c_2\in \mathfrak{g}_1.$ 
\end{itemize}
To see that the (seemingly more general) data in Prop.~\ref{prop:dlacrossed} is equivalent to that of a crossed module (cf. \cite[Prop.~ 3.3]{CSX}), note that by (b) 
we can equip $\mathfrak{g}_1$ with the skew-symmetric bracket $[c_1,c_2]_1:=(\partial c_1)  \cdot c_2$, and (a) implies that $\partial([c_1,c_2]_1)=[\partial c_1, \partial c_2]_0$. 
The Jacobi identity follows from the fact that the $\mathfrak{g}_0$-action on $\mathfrak{g}_1$ is by derivations of $[\cdot,\cdot]_1$, which we now check. For $v\in \mathfrak{g}_0$ and $c_1,c_2\in \mathfrak{g}_1$, we have 
$$
[v\cdot c_1,c_2]=(\partial(v\cdot c_1)) \cdot c_2 = [v,\partial c_1]_0 \cdot c_2 = v\cdot (\partial c_1 \cdot c_2)- \partial c_1 \cdot (v\cdot c_2),
$$
where we have used (a) and that $\mathfrak{g}_0\to \mathrm{End}(\mathfrak{g}_1)$ is bracket preserving. Finally
$$
v\cdot [c_1,c_2]_1= v\cdot (\partial c_1\cdot c_2)= [v\cdot c_1,c_2]_1 + \partial c_1 \cdot (v\cdot c_2) = [v\cdot c_1,c_2]_1 + [c_1, v\cdot c_2]_1.
$$

In conclusion, the right correspondence in \eqref{eq:Lie2algequiv} and the previous proposition just give the well-known equivalence of Lie 2-algebras and Lie-algebra crossed modules.

Note that both $\Gamma$ and $\mathfrak{g}$ are naturally identified, as vector spaces,  with $\mathfrak{g}_1\oplus \mathfrak{g}_0$, where $\mathfrak{g}_1$ is the core of each diagram.
But they differ as (vertical) Lie algebras: 
$\Gamma$ is the semi-direct product of $\mathfrak{g}_0$ and $\mathfrak{g}_1$ as Lie algebras, whereas 
$\mathfrak{g}$ is the semi-direct product of $\mathfrak{g}_0$ with the $\mathfrak{g}_0$-module $\mathfrak{g}_1$, i.e., forgetting the Lie-algebra structure on $\mathfrak{g}_1$.

The algebraic approach to double Lie algebroids in terms of Weil algebras \cite{pike} gives another viewpoint on Lie 2-algebras.
Any 2-vector space $\Gamma$ with side $\mathfrak{g}_0$ and core $\mathfrak{g}_1$ has an underlying (split) double vector bundle $\mathfrak{g}=\mathfrak{g}_0\oplus \mathfrak{g}_1$.
As recalled in $\S$~\ref{subsec:weil}, its corresponding Weil algebra is simply 
\begin{equation}\label{eq:weil}
    \mathcal{W}(\mathfrak{g})= \wedge \mathfrak{g}_0^* \otimes \vee \mathfrak{g}_1^*,
\end{equation}    
seen as a bigraded algebra where elements in $\mathfrak{g}_0^*$ have bidegree $(1,0)$ and those in $\mathfrak{g}_1^*$ have bidegree $(1,1)$. The map $\partial: \mathfrak{g}_1\to \mathfrak{g}_0$ determining the 2-vector space structure on $\Gamma$ (or, equivalently, the horizontal VB-algebroid structure on $\mathfrak{g}$)  is encoded in a degree $(0,1)$ differential $\delta_v$ on $\mathcal{W}(\mathfrak{g})$, the unique extension of $\partial^*: \mathfrak{g}_0^*\to \mathfrak{g}_1^*$. 

The additional Lie-algebra structure making $\Gamma$ into a Lie 2-algebra is given by a second differential $\delta_h$ on $\mathcal{W}(\mathfrak{g})$, of bidegree $(1,0)$, commuting with $\delta_v$. Note that $\delta_h$
is determined by the maps 
$$
\delta_h: \mathfrak{g}_0^*\to \wedge^2\mathfrak{g}_0^*,\qquad
\delta_h: \mathfrak{g}_1^*\to \mathfrak{g}_0^*\otimes \mathfrak{g}_1^*. 
$$
The dual maps give the Lie bracket on $\mathfrak{g}_0$ and the representation of $\mathfrak{g}_0$ on $\mathfrak{g}_1$ of Proposition~\ref{prop:dlacrossed},
$$
[\cdot,\cdot]:  \wedge^2\mathfrak{g}_0 \to \mathfrak{g}_0^,\qquad
[\cdot,\cdot]: \mathfrak{g}_0\otimes \mathfrak{g}_1 \to \mathfrak{g}_1,
$$
and extend to a Gerstenhaber bracket $[\cdot,\cdot]$ on $\mathcal{W}(\mathfrak{g}^*)= \wedge \mathfrak{g}_1 \otimes \vee \mathfrak{g}_0$ compatible with the differential extending $\partial:  \mathfrak{g}_1\to \mathfrak{g}_0 $

\begin{remark}[Weak Lie 2-algebras]
The differentials $\delta_h$ and $\delta_v$ on $\mathcal{W}(\mathfrak{g})$ can be seen as components of a differential of total degree 1. 
In general, 
a {\bf weak Lie 2-algebra} (or a 2-term $L_\infty$-algebra)  \cite{BaezCrans,BSZ}
is a 2-term graded vector space $\mathfrak{g}=\mathfrak{g}_1\oplus \mathfrak{g}_0$ with a degree 1 differential $\delta$ on the (graded) symmetric algebra
${\rm Sym}(\mathfrak{g}^*[-1])\cong \mathcal{W}(\mathfrak{g})$, where we regard $\mathfrak{g}$ as a double vector bundle with core $\mathfrak{g}_1$ and (right) side $\mathfrak{g}_0$. Besides the horizontal and vertical component, $\delta$ has an additional component of bidegree $(2,-1)$, determined by the {\em Jacobiator} $\delta_J:\mathfrak{g}_1^*\to \wedge^3\mathfrak{g}_0^*$.
In conclusion, a Lie 2-algebra is the same as a weak Lie 2-algebra whose Jacobiator vanishes (\cite[Sec.~5]{BaezCrans}).
\hfill $\diamond$
\end{remark}

We finally comment on the left correspondence in \eqref{eq:Lie2algequiv}. The double Lie groupoids on the left-hand side are called {\bf Lie 2-groups}.  
These objects are equivalent to Lie-group crossed modules \cite{brown} (see also \cite{noohi}), which are the natural global counterparts of the Lie-algebra  crossed modules.
From this perspective, by differentiating and integrating crossed-module data,
one obtains a bijective correspondence between Lie 2-algebras and {\em simply connected} Lie 2-groups (i.e., Lie 2-groups $G$ whose side $G_0$ and core $G_1 = \mathrm{ker}(s: G\to G_0)$ are simply connected; note that, as a manifold, $G=G_0\times G_1$, so this implies that $G$ is also simply connected).

\subsection{Lie 2-bialgebras and Poisson double structures} \label{subsec:lie2bi}
Our goal now is to relate Poisson double structures to Lie 2-bialgebras \cite{BSZ,CSX}, which we introduce as special cases of the ``Lie- bialgebroid groupoids'' \eqref{eq:LBgrp} of $\S$~\ref{subsec:PLA}. 

\begin{definition}\label{def:Lie2bi}
A {\bf Lie 2-bialgebra} is a pair of Lie 2-algebras $(\Gamma,\Gamma^*)$ on dual 2-vector spaces (duality as VB-groupoids) such that, as Lie algebras, they form a Lie bialgebra.
\end{definition}

By Corollary~\ref{cor:LBgdLB}, Lie 2-bialgebras can be equivalently defined by their infinitesimal counterparts (in the sense of Definition~\ref{def:dlbalg}).

Using the equivalence between Lie 2-algebras and Lie-algebra crossed modules, we see that the previous definition is equivalent to that of a {\em Lie-bialgebra crossed module}  \cite[Def.~3.4]{CSX}, and it follows from  \cite[Thm.~3.8]{CSX}
that our definition of Lie 2-bialgebra agrees with the one in \cite{CSX2,CSX}.

By dualizing the Lie-algebra structure on $\Gamma^*$,  Lie 2-bialgebras are seen to be equivalent to 
{\bf Poisson Lie 2-algebras}, i.e., Poisson LA-groupoids of the form
$$
 \begin{matrix}
(\Gamma, \pi) & \toto & \mathfrak{g}_0 \\ \Downarrow & & \Downarrow\\ * & \toto & *.
\end{matrix}
$$

Our results in Section~\ref{sec:diffint} on Lie theory of Poisson double structures lead to a refinement of the equivalences in 
\eqref{eq:Lie2algequiv}:
\begin{equation}\label{eq:PDSequiv}
\begin{matrix}
(G,\pi_G) & \toto & G_0 \\ \downdownarrows & & \downdownarrows\\ * & \toto & *
\end{matrix} \quad \;\; \leftrightharpoons \quad \;\; 
\begin{matrix}
(\Gamma,\pi_\Gamma) & \rightrightarrows & \mathfrak{g}_0 \\ 
\Downarrow &  & \Downarrow\\ 
* & \rightrightarrows & *
\end{matrix} \quad \;\;  \leftrightharpoons \quad \;\;
\begin{matrix}
(\mathfrak{g},\pi) & \Longrightarrow & \mathfrak{g}_0 \\ 
\Downarrow &  & \Downarrow\\ 
* & \Longrightarrow & *.
\end{matrix}
\end{equation}


A {\bf Poisson 2-group} is a Poisson double groupoid as in the left. Their correspondence with Poisson 2-algebras is a special case of Theorem~\ref{thm:diffintegpis} and immediately leads to the following result \cite[Thm.~3.2, Cor.~3.3]{CSX2}.

\begin{proposition}
A Poisson 2-group differentiates to a Lie 2-bialgebra, 
any Lie 2-bialgebra can be integrated to a Lie 2-group, and this establishes a one-to-one correspondence between simply connected Poisson 2-groups and Lie 2-bialgebras.
\end{proposition}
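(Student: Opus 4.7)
The plan is to read off the statement as the special case of Theorem~\ref{thm:diffintegpis} corresponding to double structures with trivial bottom side, combined with the equivalence between Lie 2-algebras and simply connected Lie 2-groups recorded on the left of \eqref{eq:Lie2algequiv}. Both the differentiation and integration directions are essentially corollaries of the Lie theory developed in Section~\ref{sec:diffint}; only the source-connectedness hypothesis in the integration theorem needs a short verification, which turns out to be automatic because the bottom base is a point.

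For differentiation, suppose $(G,\pi_G)$ is a Poisson 2-group. Since its bottom side is trivial, applying Theorem~\ref{thm:diffintegpis}(1) yields a Poisson LA-groupoid $(\Gamma,\pi_\Gamma)$ with trivial bottom side as well, that is, a Poisson Lie 2-algebra. Dualizing $\Gamma$ as a VB-groupoid using \eqref{eq:PVBLAdual} produces a second Lie 2-algebra $\Gamma^*$, and the Poisson structure $\pi_\Gamma$ is exactly the datum making the vertical Lie algebras into a Lie bialgebra $(\Gamma,\Gamma^*)$; this is a Lie 2-bialgebra in the sense of Definition~\ref{def:Lie2bi}.

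For integration, let $(\Gamma,\Gamma^*)$ be a Lie 2-bialgebra, viewed as a Poisson Lie 2-algebra $(\Gamma,\pi_\Gamma)$. The underlying Lie 2-algebra $\Gamma$ integrates, by the left-hand equivalence in \eqref{eq:Lie2algequiv}, to a simply connected Lie 2-group $G$. I then invoke Theorem~\ref{thm:diffintegpis}(2) on $(\Gamma,\pi_\Gamma)$ to promote $G$ to a Poisson 2-group. The only hypothesis to check is source-connectedness of the Lie groupoid of composable pairs $G\times_{G_0}G\toto H\times_M H$; but here $H=M=\ast$, so the condition reduces to connectedness of the manifold $G\times_{G_0}G$, which is automatic since $G_0$ and the core $G_1$ are simply connected (so $G=G_0\times G_1$ as a manifold is simply connected, and so are the source-fibers of $G\toto G_0$).

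Finally, the one-to-one correspondence between simply connected Poisson 2-groups and Lie 2-bialgebras follows by combining the uniqueness of the simply connected Lie 2-group integrating a given Lie 2-algebra with the uniqueness of the multiplicative Poisson structure on a source-simply-connected Poisson groupoid (Mackenzie--Xu, applied to $\pi_G^\sharp$). The main obstacle I anticipate is the source-connectedness assumption in Theorem~\ref{thm:diffintegpis}(2), but as noted above, it is trivially satisfied in the 2-group context, so the result is essentially immediate from the general theory already in place.
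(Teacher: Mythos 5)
Your proposal is correct and follows essentially the same route as the paper: regard the Lie 2-bialgebra as a Poisson Lie 2-algebra, integrate the underlying Lie 2-algebra to a simply connected Lie 2-group, and apply Theorem~\ref{thm:diffintegpis} after checking its connectedness hypotheses. The only cosmetic difference is that the paper verifies connectedness of the composable pairs by exhibiting $G\times_{G_0}G\cong G_1\times G_0\times G_1$ via the action-groupoid description $G_1\ltimes G_0\toto G_0$, whereas you argue it abstractly from connectedness of $G$ and of the source/target fibers.
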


\begin{proof} 
We regard a Lie 2-bialgebra as a Poisson Lie 2-algebra. We know that any Lie 2-algebra can be integrated to a simply connected 2-Lie group. Recall (see e.g. \cite{noohi}) that, on a Lie 2-group $G$, the groupoid structure  is that of an action Lie groupoid $G_1\ltimes G_0 \toto G_0$, where $G_1$ is the core.
Assuming that $G_0$ and $G_1$ are simply connected, $G\toto G_0$ is source simply connected and
the space of composable pairs  $G\times_{G_0}G = G_1\times G_0 \times G_1$ is simply connected.
The proposition then follows from Theorem~\ref{thm:diffintegpis}.
\end{proof}

Regarding the right-hand side of \eqref{eq:PDSequiv}, since Lie 2-algebras are source simply connected, we have an equivalence  (by differentiation and integration) between Poisson Lie 2-algebras and their corresponding Poisson double algebroids by Theorem~\ref{thm:diffintPLA}.
The next result describes the infinitesimal object, leading to an alternative characterization of Lie 2-bialgebras.

\begin{proposition}\label{prop:matched}
A Poisson double algebroid of type
\begin{equation}\label{eq:PDLA}
    \begin{matrix}
(\mathfrak{g},\pi) & \Longrightarrow & \mathfrak{g}_0 \\ \Downarrow & & \Downarrow\\ * & \Longrightarrow & *
\end{matrix}
\end{equation}
is equivalent to Lie algebra
 crossed modules $[\mathfrak{g}_1\stackrel{\partial}{\to} \mathfrak{g}_0]$ and $[\mathfrak{g}_0^*\stackrel{\partial^*}{\to} \mathfrak{g}^*_1]$ such that $(\mathfrak{g}_0,\mathfrak{g}_1^*)$ forms a matched pair of Lie algebras (with respect to the dual actions).
\end{proposition}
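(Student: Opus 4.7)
The plan is to combine Propositions~\ref{prop:pda-dlba} and \ref{prop:dlacrossed} to extract two crossed modules from the double Lie bialgebroid data, and then identify the residual bialgebra compatibility with the matched-pair condition.

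First, by Proposition~\ref{prop:pda-dlba}, the Poisson structure $\pi$ is equivalent to a double Lie bialgebroid $(\mathfrak{g},\mathfrak{g}^*)$. Since $\mathfrak{g}$ has side $\mathfrak{g}_0$, trivial (vertical) side $*$, and core $\mathfrak{g}_1$, its vertical dual $\mathfrak{g}^*$ has side $\mathfrak{g}_1^*$, trivial (vertical) side, and core $\mathfrak{g}_0^*$, so it is of exactly the same shape. Applying Proposition~\ref{prop:dlacrossed} to each of $\mathfrak{g}$ and $\mathfrak{g}^*$ then produces Lie-algebra crossed modules $[\mathfrak{g}_1\stackrel{\partial}{\to}\mathfrak{g}_0]$ and $[\mathfrak{g}_0^*\stackrel{\tilde\partial}{\to}\mathfrak{g}_1^*]$; the horizontal VB-algebroid duality between $\mathfrak{g}$ and $\mathfrak{g}^*$ built into Definition~\ref{def:dlbalg} forces $\tilde\partial=\partial^*$.

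It then remains to show that the remaining content of the double Lie bialgebroid structure — the vertical Lie bialgebroid compatibility between $\mathfrak{g}$ and $\mathfrak{g}^*$, which since the base is a point is just the condition that $(\mathfrak{g},\mathfrak{g}^*)$ is a Lie bialgebra — is equivalent to $(\mathfrak{g}_0,\mathfrak{g}_1^*)$ being a matched pair of Lie algebras with the dual actions. By the remark after Proposition~\ref{prop:dlacrossed}, the vertical Lie algebra structures are the semidirect products $\mathfrak{g}=\mathfrak{g}_0\ltimes\mathfrak{g}_1$ and $\mathfrak{g}^*=\mathfrak{g}_1^*\ltimes\mathfrak{g}_0^*$, in which $\mathfrak{g}_1$ and $\mathfrak{g}_0^*$ appear as abelian ideals. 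I would argue through the Manin-triple picture: this Lie bialgebra corresponds to a Lie algebra structure on the Drinfeld double $\mathfrak{d}=\mathfrak{g}\oplus\mathfrak{g}^*=\mathfrak{g}_0\oplus\mathfrak{g}_1\oplus\mathfrak{g}_1^*\oplus\mathfrak{g}_0^*$ containing $\mathfrak{g}$ and $\mathfrak{g}^*$ as Lagrangian subalgebras. Under the canonical pairing on $\mathfrak{d}$, the subspaces $\mathfrak{g}_0\oplus\mathfrak{g}_1^*$ and $\mathfrak{g}_1\oplus\mathfrak{g}_0^*$ are also Lagrangian and complementary; using that $\mathfrak{g}_1$ and $\mathfrak{g}_0^*$ are abelian ideals, a short computation with the standard formula for the mixed brackets on $\mathfrak{d}$ in terms of coadjoint actions shows that each of these subspaces is in fact a Lie subalgebra, and that the induced bracket on $\mathfrak{g}_0\oplus\mathfrak{g}_1^*$ is precisely the bicrossed-product bracket associated to the mutual actions dual to the two crossed-module representations. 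Hence the Drinfeld double exists exactly when $(\mathfrak{g}_0,\mathfrak{g}_1^*)$ is a matched pair.

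The main obstacle is making this last step precise and checking that no spurious identities arise beyond the matched-pair compatibility and the two crossed-module axioms. A cleaner, more algebraic route is to use the Weil algebra model from Proposition~\ref{prop:pda-dlba}: on $\mathcal{W}(\mathfrak{g})=\wedge\mathfrak{g}_0^*\otimes\vee\mathfrak{g}_1^*$, a Poisson-double-algebroid structure is the same as commuting differentials $\delta_h$, $\delta_v$ of bidegrees $(1,0)$, $(0,1)$ together with a Gerstenhaber bracket $[\cdot,\cdot]$ of bidegree $(-1,-1)$, each differential being a derivation of the bracket. Evaluating on generators (which live only in bidegrees $(1,0)$ and $(1,1)$), the identities $\delta_h^2=0$ and $[\delta_h,\delta_v]=0$ recover the first crossed module, the Jacobi identity for $[\cdot,\cdot]$ together with the $\delta_v$-derivation property recover the second, and the remaining condition that $\delta_h$ is a derivation of $[\cdot,\cdot]$ unpacks term by term into precisely the mutual-compatibility identities that define a matched pair on $(\mathfrak{g}_0,\mathfrak{g}_1^*)$ with the dual actions. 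This delivers the claimed equivalence without expanding the structure constants of the full Drinfeld double.
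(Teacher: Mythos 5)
Your reduction to a double Lie bialgebroid via Proposition~\ref{prop:pda-dlba} and the extraction of the two crossed modules from Proposition~\ref{prop:dlacrossed} (with $\tilde\partial=\partial^*$ forced by the horizontal duality) is fine. The gap is in the step you yourself flag as the main obstacle: the equivalence between the residual condition --- that the vertical Lie algebras $\mathfrak{g}=\mathfrak{g}_0\ltimes\mathfrak{g}_1$ and $\mathfrak{g}^*=\mathfrak{g}_1^*\ltimes\mathfrak{g}_0^*$ form a Lie bialgebra --- and the matched-pair condition on $(\mathfrak{g}_0,\mathfrak{g}_1^*)$ is precisely the content of the proposition beyond the two crossed modules, and you only assert it. The Manin-triple sketch as written does not deliver it: noting that $\mathfrak{g}_0\oplus\mathfrak{g}_1^*$ and $\mathfrak{g}_1\oplus\mathfrak{g}_0^*$ are complementary Lagrangian subspaces, and even that they become subalgebras carrying the bicrossed bracket once the double exists, only yields the implication (bialgebra $\Rightarrow$ matched pair). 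For the converse you must verify that the Jacobi identity for the candidate double bracket holds on all mixed triples (those drawing elements from three of the four summands $\mathfrak{g}_0,\mathfrak{g}_1,\mathfrak{g}_1^*,\mathfrak{g}_0^*$), using only the two crossed-module axioms and the matched-pair identities; that is exactly the ``short computation'' you defer, and it is the heart of the statement, not a routine check. The Weil-algebra alternative would indeed work, but there too the decisive unpacking of ``$\delta_h$ is a derivation of $[\cdot,\cdot]$'' on the mixed components is stated rather than carried out.

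The paper closes this step with no computation at all, by using the other duality: the compatibility of $\pi$ with the \emph{vertical} VB-algebroid of $\mathfrak{g}$ is equivalent to the horizontal dual $\mathfrak{g}^\bullet$ carrying a double-Lie-algebroid structure, and $\mathfrak{g}^\bullet$ has sides $\mathfrak{g}_0$ and $\mathfrak{g}_1^*$ with trivial core, i.e.\ it is vacant; by Mackenzie's result recalled in Example~\ref{ex:matchedpair}, a vacant double-Lie-algebroid structure is exactly a matched pair of the (dual) representations. The compatibility with the \emph{horizontal} VB-algebroid is likewise equivalent to $\mathfrak{g}^*$ being a double Lie algebroid, which by Proposition~\ref{prop:dlacrossed} is the second crossed module. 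If you wish to keep your bialgebra-based route you must carry out the Drinfeld-double or Weil-algebra verification in full; otherwise, replacing that step by the dualization to $\mathfrak{g}^\bullet$ is the efficient way to complete the argument.
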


\begin{proof}
As we saw in $\S$ \ref{subsec:Lie2alg} the double Lie algebroid in \eqref{eq:PDLA} is equivalent to a crossed module $[\mathfrak{g}_1\stackrel{\partial}{\to} \mathfrak{g}_0]$, where $\mathfrak{g}_1$ is the core. The compatibility condition with $\pi$ is equivalent to the  horizontal and vertical VB-algebroids in \eqref{eq:PDLA} being Poisson VB-algebroids: 
$$
\begin{matrix}
(\mathfrak{g},\pi) & \Longrightarrow & \mathfrak{g}_0 \\ \downarrow & & \downarrow\\ * & \Longrightarrow & *,
\end{matrix} \qquad\qquad
\begin{matrix}
(\mathfrak{g},\pi) & \longrightarrow & \mathfrak{g}_0 \\ \Downarrow & & \Downarrow\\ * & \longrightarrow & *,
\end{matrix} 
$$
or, equivalently, that $\pi$ induces double-Lie-algebroid structures on the  duals:
$$
\begin{matrix}
\mathfrak{g}^* & \Longrightarrow & \mathfrak{g}_1^* \\ \Downarrow & & \Downarrow\\ * & \Longrightarrow & *,
\end{matrix}\qquad\qquad 
\begin{matrix}
\mathfrak{g}^\bullet & \Longrightarrow & \mathfrak{g}_0 \\ \Downarrow & & \Downarrow\\ \mathfrak{g}_1^* & \Longrightarrow & *.
\end{matrix}
$$
The left diagram again defines a Lie-algebra crossed module   $[\mathfrak{g}_0^*\stackrel{\partial^*}{\to}\mathfrak{g}_1^*]$, while by Example~\ref{ex:matchedpair} the right diagram says that the representations of $\mathfrak{g}_0$ on $\mathfrak{g}_1^*$ and of  $\mathfrak{g}_1^*$ on $\mathfrak{g}_0$ form a matched pair.
\end{proof}

Since Lie 2-bialgebras are equivalent to Poisson double algebroids \eqref{eq:PDLA},
the previous proposition recovers the characterization of Lie 2-bialgebras in \cite[Thm.~3.12]{CSX2}.


The algebraic characterization of Poisson double algebroids via Weil algebras in
$\S$ \ref{subsec:weil} provides us with an  alternative definition of Lie 2-bialgebra, parallel to the descriptions of usual Lie bialgebras in $\S$ \ref{subsec:poisalg}.
Following the notation in $\S$ \ref{subsec:Lie2alg}, for a 2-vector space $\Gamma$ we have a corresponding Weil algebra 
$\mathcal{W}(\mathfrak{g})$ \eqref{eq:weil}, where $\mathfrak{g}= \mathfrak{g}_0\oplus \mathfrak{g}_1$ is its underlying double vector bundle.

\begin{proposition}
A Lie 2-bialgebra $(\Gamma,\Gamma^*)$ consists of Lie 2-algebra structures on dual 2-vector spaces such that the differential $\delta_h+\delta_v$ on $\mathcal{W}(\mathfrak{g})$ induced by $\Gamma$ is a derivation of the Gerstenhaber bracket $[\cdot,\cdot]$ induced by $\Gamma^*$.
\end{proposition}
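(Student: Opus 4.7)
The plan is to deduce the proposition directly from the Weil-algebra characterization of Poisson double algebroids established earlier, together with a bidegree argument to repackage two separate compatibility conditions as one.

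First, I would invoke the chain of equivalences summarized in \eqref{eq:PDSequiv} together with Corollary~\ref{cor:LBgdLB}: a Lie 2-bialgebra $(\Gamma,\Gamma^*)$ in the sense of Definition~\ref{def:Lie2bi} is the same data as a double Lie bialgebroid of the trivial-side type appearing in \eqref{eq:PDLA}, equivalently a Poisson double algebroid $(\mathfrak{g},\pi)$ with $\mathfrak{g}$ the double Lie algebroid corresponding to $\Gamma$ under \eqref{eq:Lie2algequiv}. Dualizing the Lie 2-algebra structure on $\Gamma^*$ provides the Poisson structure on $\mathfrak{g}$, and the Lie-bialgebra compatibility between $\Gamma\then\ast$ and $\Gamma^*\then\ast$ translates exactly to the Lie-bialgebroid compatibility between the vertical algebroids $\mathfrak{g}\then\ast$ and $\mathfrak{g}^*\then\ast$.

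Second, I would apply Proposition~\ref{prop:pda-dlba} to reformulate the Poisson double algebroid $(\mathfrak{g},\pi)$ as Weil-algebra data on $\mathcal{W}(\mathfrak{g})$: two commuting differentials $\delta_h,\delta_v$ of respective bidegrees $(1,0)$ and $(0,1)$, plus a Gerstenhaber bracket $[\cdot,\cdot]$ of bidegree $(-1,-1)$, with both $\delta_h$ and $\delta_v$ derivations of the bracket. As unpacked in $\S$\ref{subsec:Lie2alg}, the pair $(\delta_h,\delta_v)$, equivalently the total-degree-$1$ differential $\delta_h+\delta_v$, is precisely the data encoding the Lie 2-algebra structure on $\Gamma$ (dually of its underlying Lie-algebra crossed module), and symmetrically the Gerstenhaber bracket $[\cdot,\cdot]$ encodes the Lie 2-algebra structure on the dual 2-vector space $\Gamma^*$.

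The final step is a short bidegree argument. The identity ``$\delta_h$ is a derivation of $[\cdot,\cdot]$'' lives in bidegree $(0,-1)$, while ``$\delta_v$ is a derivation'' lives in bidegree $(-1,0)$. These are the two distinct bihomogeneous components of the single total-degree-$(-1)$ statement that $\delta_h+\delta_v$ is a derivation of $[\cdot,\cdot]$, so by the bigraded nature of $\mathcal{W}(\mathfrak{g})$ the two conjunct conditions are equivalent to the single one. I do not expect a genuine obstacle here; the only point meriting care is checking that, in the trivial-side case, the Weil-algebra description of Proposition~\ref{prop:pda-dlba} really specializes so that $\delta_h+\delta_v$ agrees with the Chevalley--Eilenberg differential of $\Gamma$ as a Lie 2-algebra, which is immediate from the description of $\mathcal{W}(\mathfrak{g})=\wedge\mathfrak{g}_0^*\otimes\vee\mathfrak{g}_1^*$ given in~\eqref{eq:weil} and the discussion following it.
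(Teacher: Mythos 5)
Your argument is correct and matches the paper's (implicit) justification: the paper states this proposition without a separate proof, as a direct consequence of the equivalence between Lie 2-bialgebras and trivial-side Poisson double algebroids (Corollary~\ref{cor:LBgdLB} together with \eqref{eq:PDSequiv} and Theorem~\ref{thm:diffintPLA}) combined with the Weil-algebra characterization of Proposition~\ref{prop:pda-dlba}, which is exactly the chain you use. Your bidegree observation that the two derivation conditions are the $(0,-1)$ and $(-1,0)$ homogeneous components of the single condition on $\delta_h+\delta_v$ is the correct way to collapse them into one statement, and it is sound.
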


One can also view a Lie 2-bialgebra as given by two vector spaces $\mathfrak{g}_0$ and $\mathfrak{g}_1$ and the data $(\mathcal{W}(\mathfrak{g}), \delta_h, \delta_v, [\cdot,\cdot])$ as described in Proposition~\ref{prop:pda-dlba}.

This algebraic viewpoint can be used to see the equivalence with the Lie 2-bialgebras in 
\cite[Section~3]{BSZ} (the conditions displayed in equations (12) and (17) therein correspond to the characterization via matched pairs).
An attractive feature of the categorification of Lie bialgebras via double structures is that all the gradings are naturally built in from the duality theory of VB-groupoids, VB-algebroids and double vector bundles (cf. \cite[$\S$ 2.3]{BSZ} and \cite[$\S$ 2.1]{CSX}).


\end{document}